\newcommand{\N}{\mathbb{N}}
\newcommand{\R}{\ensuremath{\mathbb{R}}}
\theoremstyle{plain}
\newtheorem{Theorem}{Theorem}[section]
\newtheorem{Proposition}[Theorem]{Proposition}
\newtheorem*{Theorem*}{Theorem}
\newtheorem{Corollary}[Theorem]{Corollary}
\newtheorem{Lemma}[Theorem]{Lemma}
\theoremstyle{Remark}
\newenvironment{abstract}{}{}
\newtheorem{Example}[Theorem]{Example}
\theoremstyle{Definition}
\newtheorem{Definition}[Theorem]{Definition}
\renewcommand{\S}{\ensuremath{\mathbb{S}}}
\renewcommand{\H}{\ensuremath{\mathbb{H}}}
\newcommand{\V}{\ensuremath{\mathbb{V}}}
\newcommand{\C}{\ensuremath{\mathbb{C}}}
\newcommand{\Z}{\ensuremath{\mathbb{Z}}}
\titleformat{\chapter}[display]
{\normalfont\huge\bfseries}{\chaptertitlename\ \thechapter}{20pt}{\Huge}
\titlespacing*{\chapter}{0pt}{-10pt}{20pt}
\newcommand{\lspan}{\ensuremath{\pmb{\langle}}}
\newcommand{\rspan}{\ensuremath{\pmb{\rangle}}}
\title{\textbf{TFG Matemàtiques\\ On the Canonical Contact Structure of \\the Space of Null Geodesics of a Spacetime}}
\author{
	Adrià Marín Salvador \\
}
\date{}
\begin{document}
	\begin{titlepage} 
		
		\centering 
		
		
		\vspace*{\baselineskip} 
		
		\includegraphics[width = 0.5\textwidth]{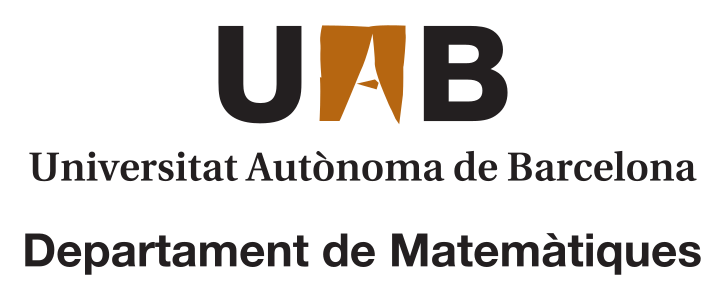}
		
		\rule{0.7\textwidth}{0.8pt} 
		
		\vspace{0.65\baselineskip} 
		
		{\LARGE \textbf{Bachelor's Thesis in Mathematics}} 
		
		\vspace{0\baselineskip} 
		
		\rule{0.7\textwidth}{0.8pt}\vspace*{-\baselineskip}\vspace{3.2pt} 
		
		\vspace{1.8\baselineskip} 
		
		
		\LARGE \textbf{ON THE CANONICAL CONTACT STRUCTURE \\ OF THE SPACE OF NULL GEODESICS \\OF A SPACETIME} 
		
		\vspace*{-.5\baselineskip} 
		

		\vspace{0.8\baselineskip} 
		
		\includegraphics[width = \textwidth, trim=4 200 4 4,clip = true]{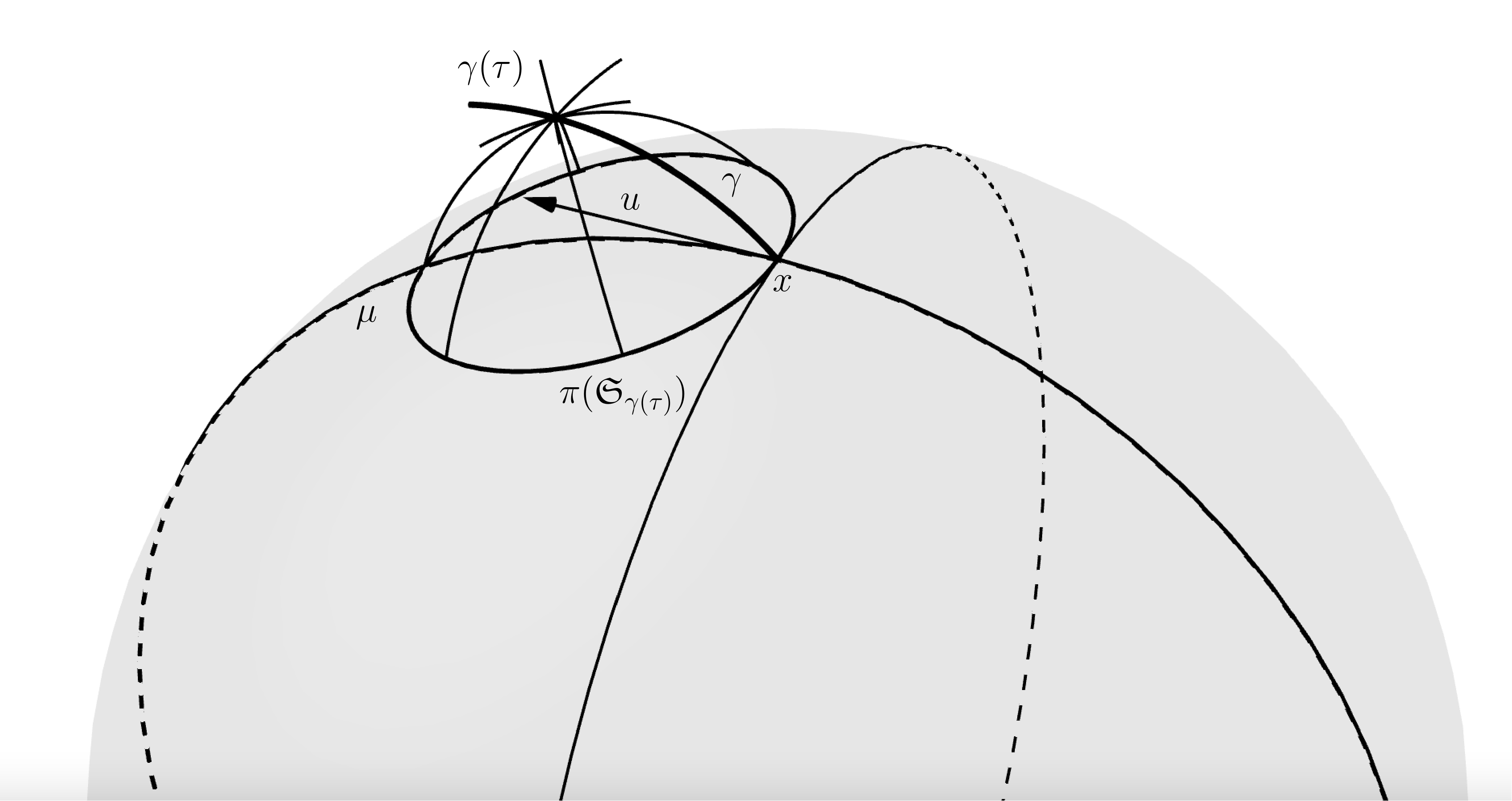}
		
		{\scshape\Large Adrià Marín Salvador \\} 
		
		\vspace{.7\baselineskip} 
		
		{\scshape\large Supervised by Dr. Roberto Rubio Núñez\\ Cosupervised by Dr. Francisco Presas Mata}
		
		\vfill 
		

		\vspace{0.3\baselineskip} 
		
		
		{\large July 2021}
	\end{titlepage}
		\pagenumbering{gobble}
\begin{abstract}
	The space of null geodesics of a spacetime carries a canonical contact structure which has proved to be key in the discussion of causality in spacetimes. However, not much progress has been made on its nature and not many explicit calculations for specific spacetimes can be found over the literature. We compute the spaces of null geodesics and their canonical contact structures for the manifold $\S^2\times\S^1$ equipped with the family of metrics $\{g_c = g_\circ-\frac{1}{c^2}dt^2 \}_{c\in\N^+}$. We obtain that these are the lens spaces $L(2c,1)$ and that the contact structures are the pushforward of the canonical contact structure on $ST\S^2\cong L(2,1)$ under the projection map. We also study the applicability of Engel geometry on the discussion of three-dimensional spacetimes. We show that, for a particular type of three-dimensional spacetimes, one can obtain the space of null geodesics and its contact structure solely from the information of the Lorentz prolongation of the spacetime. We present an approach that makes use of this result to recover the spacetime from its space of null geodesics and skies.
\end{abstract}

\chapter*{\huge Acknowledgements}
I am deeply grateful to my supervisors Dr. Roberto Rubio and Dr. Francisco Presas. This work started as a collaboration with Fran at the ICMAT supported by a Severo Ochoa - Introduction to Research grant. I am thankful to ICMAT for providing me with such opportunity. I cannot put into words how much Fran (and also his students) helped me and supported me during my time at ICMAT, and also during the production of this thesis. Fran's ideas and comments have been essential throughout the whole process.

After coming back from ICMAT, Roberto accepted to continue the work on this topic and to supervise my thesis. I am indebted to him for agreeing on embarking on this project, which fell outside of his area of expertise. Roberto's willingness to help me, guide me and support me has been crucial during my work. His numerous contributions have also made this dissertation undoubtedly better. Roberto also accepted to support me when I received a \textit{Beca de Colaboración}, which has also funded me throughout the production of this thesis.

Roberto's and Fran's different (but extremely valuable) ways of viewing and understanding mathematics have enriched both this dissertation and my mathematical skills. I cannot stress enough how key their support has been in this early stage of my mathematical career.

I also appreciate all the contributions made by friends and colleagues that have substantially improved this work. I would like to specifically mention my colleagues Carles Falcó, Jaime Pedregal, Miquel Saucedo and Sergio Serrano de Haro. I would also like to acknowledge Teo Gil Moreno de Mora, with whom I shared my experience at ICMAT and with whom I started developing the ideas that led to this dissertation.

	\tableofcontents
	\chapter*{Introduction}
	\addcontentsline{toc}{chapter}{Introduction}
	\setcounter{page}{1}
	\pagenumbering{arabic}
	
At the turn of the twentieth century, inspired by the laws of nature that A. Einstein later framed under the name of \textit{general relativity} \cite{einstein}, mathematicians developed the tools of pseudo-Riemannian and Lorentzian geometry and introduced the concept of Minkowski spacetime \cite{poincare, minkowski}. This is the pseudo-Riemannian manifold $\R^4$ with metric $\eta = dx_1^2+dx_2^2+dx_3^2-dx_4^2$. The negative eigendirection of the metric depicts time, while the span of the others characterizes the spatial components of the spacetime. 
	
	The theory rapidly evolved past physical meaning and became purely mathematical, and the concept of Minkoswki spacetime was generalised. A spacetime is a Lorentzian manifold (that is, a pseudo-Riemannian $n$-manifold with signature $(n-1,1)$) in which one can choose a vector field with negative length at each point, called a choice of future. At each point of the manifold, the metric defines two hemicones of vectors of length zero, which are called the null vectors of the spacetime. 
	
	In the 1980s, influenced by the work of R. Penrose \cite{penrose1, penrose2}, R. Low introduced the space of unparametrized geodesics with null tangent vectors at all points, called the space of null geodesics of a spacetime, and studied its topology and geometry \cite{low1,low2,low3,low4, low5}. When this space is a differentiable manifold, Low discovered the existence of a canonical contact structure \cite{low5}. A contact structure is a distribution of hyperplanes (that is, a smooth choice of a hyperplane on every tangent space) that is maximally non-integrable. The contact structure on the space of null geodesics satisfies that every sky (the set of geodesics going through a particular point of the spacetime) is everywhere tangent to the distribution.
	
	The study of the contact structure on the space of null geodesics has proved to be essential in the theory, yielding important results on causality, for instance, providing obstructions to two points in the spacetime being related by a curve with non-positive tangent vectors  \cite{chernov, nonneg, natario}. However, apart from \cite{bautista}, there has not been much progress in the understanding of the nature of this structure and on the possibility of recovering the spacetime solely from its space of null geodesics. In addition, not many explicit calculations of spaces of null geodesics and their contact structures can be found in the literature. The present work aims to contribute to these directions. 
	
	In the first part, we compute the spaces of null geodesics and corresponding contact structures of the manifold $\S^2\times\S^1$ for the family of Lorentzian metrics $\{g_c = g_\circ-\frac{1}{c^2}dt^2 \}_{c\in\N^+}$, where $g_\circ$ is the round metric on $\S^2$ and $t$ is the coordinate on $\S^1$. The spacetimes $(\S^2\times\S^1, g_c)$ provide an interesting example because their spaces of null geodesics are not, in general, equivalent to the canonical contact structure of a unit tangent bundle. The latter is always the case whenever there is a global hypersurface $C$ such that every curve with tangent vectors of non-positive length intersects $C$ exactly once. Such a surface does not exist for any of the spacetimes $(\S^2\times\S^1, g_c)$, since that would imply that $\S^2\times\S^1$ is non-compact \cite{BerS}.
	
	\newpage
	More precisely, by developing a quaternionic approach to the Hopf fibration, we show the following result, see Theorem \ref{Bigone}. 
	
	\begin{Theorem*}
 Let $\mathcal{N}_c$ be the space of null geodesics on $\S^2\times\S^1$ under the metric $g_c$. Then, $\mathcal{N}_c$ is diffeomorphic to the lens space $L(2c,1)$, that is,
		\[
		\mathcal{N}_c\cong L(2c,1).
		\]
	\end{Theorem*}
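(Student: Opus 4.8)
The plan is to describe the null geodesics explicitly, recognise $\mathcal N_c$ as the quotient of $ST\mathbb S^2\times\mathbb S^1$ by a circle action, exhibit that quotient as an $\mathbb S^1$-bundle over $\mathbb S^2$, and compute its Euler number to be $\pm 2c$; the classification of circle bundles over the sphere then yields $L(2c,1)$.

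\emph{Describing the null geodesics.} Since $g_c=g_\circ-\tfrac1{c^2}\,dt^2$ is a product metric, a curve $\gamma=(\alpha,\tau)$ is a geodesic exactly when $\alpha$ is a geodesic of $(\mathbb S^2,g_\circ)$ and $\tau(s)=as+b$ is affine, and the null condition reads $|\dot\alpha|_{g_\circ}^2=a^2/c^2$. A null geodesic cannot have $\alpha$ constant (that would make it timelike), so, normalising the parametrisation so that $\alpha$ has unit speed and $\dot\tau=c$, every null geodesic has the form $s\mapsto\bigl(\alpha(s),cs+t_0\bigr)$ with $\alpha$ a unit-speed great circle and $t_0\in\mathbb S^1$, unique up to the shift $s\mapsto s+\sigma$. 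Recording the initial data $\bigl((\alpha(0),\dot\alpha(0)),t_0\bigr)\in ST\mathbb S^2\times\mathbb S^1$, this shift acts by $\sigma\cdot(x,t_0)=(\psi_\sigma x,\,t_0+c\sigma)$, where $\psi$ is the geodesic flow of $(\mathbb S^2,g_\circ)$, of period $2\pi$. Because $c\in\mathbb N^+$ this descends to an action of $\mathbb S^1=\mathbb R/2\pi\mathbb Z$, and it is free since $\psi_\sigma x=x$ forces $\sigma\in2\pi\mathbb Z$; hence $\mathcal N_c\cong(ST\mathbb S^2\times\mathbb S^1)/\mathbb S^1$ is a smooth closed $3$-manifold. (This is also exactly the range of $c$ for which every null geodesic is a closed curve.)

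\emph{Bundle structure and Euler number.} The map $\mathcal N_c\to ST\mathbb S^2/\psi\cong\mathbb S^2$, sending a null geodesic to its underlying oriented great circle (an oriented great circle being its positively oriented unit normal), is a locally trivial $\mathbb S^1$-bundle: over a point its fibre is the torus (the geodesic-flow circle times the $t$-circle) modulo the closed subcircle of slope $c$, hence a circle, with invariant coordinate $u=t-c\phi$ in a local trivialisation $D\times\mathbb S^1_\phi\times\mathbb S^1_t$. Trivialising $ST\mathbb S^2\to\mathbb S^2$ over two discs with clutching function of degree $e_0$, crossing the overlap changes $\phi$ by that degree-$e_0$ twist, so $u$ changes by $c\,e_0$; thus $\mathcal N_c\to\mathbb S^2$ has Euler number $c\,e_0$. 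Now $e_0=\pm2$, because $ST\mathbb S^2$ is diffeomorphic to $\mathbb{RP}^3=L(2,1)$, which is the total space of the circle bundle over $\mathbb S^2$ with Euler number $\pm2$ (equivalently $e_0=\chi(\mathbb S^2)$). Since an oriented circle bundle over $\mathbb S^2$ with Euler number $n$ has total space $L(|n|,1)$ (with the conventions $L(1,1)=\mathbb S^3$, $L(2,1)=\mathbb{RP}^3$, and $L(n,-1)\cong L(n,1)$), this gives $\mathcal N_c\cong L(2c,1)$.

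\emph{Where the difficulty lies.} The seemingly routine first two steps are where care is really needed: one must be scrupulous about parametrisations so that the residual action carries the slope $c$ and not $c/2$ or $2c$, and one must identify $ST\mathbb S^2$ with $\mathbb{RP}^3$ compatibly with the geodesic flow, so that the factor $2$ in $2c$ is correctly attributed to $\chi(\mathbb S^2)$ while the factor $c$ comes from the time reparametrisation. This is precisely what a concrete quaternionic model of the Hopf fibration supplies, turning the abstract quotient $(ST\mathbb S^2\times\mathbb S^1)/\mathbb S^1$ into an explicit free circle quotient of $\mathbb S^3$ (with weights $(1,1,2c)$, say) that one reads off directly as $L(2c,1)$. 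Smoothness of $\mathcal N_c$ and of its bundle structure, by contrast, is automatic here from the freeness of the action and needs no causality hypothesis on $(\mathbb S^2\times\mathbb S^1,g_c)$.
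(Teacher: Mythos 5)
Your proposal is correct, but it reaches $L(2c,1)$ by a genuinely different route than the paper. Both arguments start from the same geodesic computation and arrive at the same intermediate object: your $(ST\S^2\times\S^1)/\S^1$ is exactly the paper's $ST\S^2/\Z_c$ (restrict your circle action to the slice $t_0=0$; the return times $\sigma=2\pi j/c$ generate precisely the $\Z_c$-action by the time-$\frac{2\pi}{c}$ geodesic flow used in Section \ref{LorentzianS2S1}). From there the paper builds an explicit quaternionic model: a diffeomorphism $\Phi_{(j,k)}:S\H/\Z_2\to ST(S\V)$, the observation that the $\Z_{2c}$-action $q\mapsto qe^{\pi i/c}$ defining $L(2c,2c-1)$ descends to exactly that $\Z_c$-action, and then Bonahon's classification $L(2c,2c-1)\cong L(2c,1)$. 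You instead fibre $\mathcal{N}_c$ over the space of oriented great circles $ST\S^2/\psi\cong\S^2$, compute the clutching twist of the invariant fibre coordinate $u=t-c\phi$ to get Euler number $\pm 2c$ (with $|e_0|=2$ pinned down by the total space $ST\S^2\cong\R P^3$ of the geodesic-flow fibration, not a priori by $\chi(\S^2)$, which concerns the projection $\pi$ rather than $f$ --- though both happen to equal $\pm2$), and invoke the classification of circle bundles over $\S^2$. Your route is shorter and more conceptual, but it yields only an abstract diffeomorphism via a classification theorem; the paper's quaternionic construction pays for itself later, since the explicit projection $r:ST\S^2\to L(2c,1)$ and the Hopf-fibration bookkeeping are exactly what Theorem \ref{contactstructure} uses to transport the canonical contact structure. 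Two small points to tighten: you should say why your smooth quotient carries the canonical structure of $\mathcal{N}_c$ (e.g.\ note that $ST\S^2\times\S^1$ embeds in $C^+$ transversally to the Euler field, so your quotient map is the restriction of $\pi_\mathcal{N}$ and the structures agree by uniqueness in Theorem \ref{CharactManifolds}), and the orientation ambiguity $\pm2c$ is harmless only because the total spaces of the Euler number $\pm n$ bundles are diffeomorphic as unoriented manifolds, which is the same $L(p,q)\cong L(p,-q)$ fact the paper cites.
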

	
	The lens space $L(p,1)$ is the manifold obtained by quotienting the three-sphere $\S^3\subset \C\times\C$ by the finite $\Z_p$-action generated by $(z_1,z_2)\mapsto(e^{2\pi i /p}z_1, e^{2\pi i/p}z_2)$. In addition, the canonical contact structure on $\mathcal{N}_1\cong L(2,1)\cong ST\S^2$ is shown to be the canonical contact structure $\chi$ on $ST\S^2$. We also show how the spaces $L(2c,1)$ can be obtained by quotienting the unit tangent bundle $ST\S^2$ by a finite $\Z_c$-action, which recovers $L(4,1)\cong ST\R P^2$, see \cite{unitprojective}. We totally characterize the contact structure on $\mathcal{N}_c\cong L(2c,1)$ as follows, see Theorem \ref{contactstructure}.
	
	\begin{Theorem*}
		Let $r:ST\S^2\to L(2c,1)\cong\mathcal{N}_c$ be the canonical projection. Let $\chi$ be the canonical contact structure on $ST\S^2$. Then, the contact structure on $\mathcal{N}_c$ is
		\[
		\mathcal{H}_c = r_*\chi.
		\]
	\end{Theorem*}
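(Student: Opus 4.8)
The plan is to realise the projection $r$ as the map induced on spaces of null geodesics by a Lorentzian covering, and then to deduce the statement from the naturality of the canonical contact structure together with the already established case $c=1$. Setting $t'=t/c$, the metric $g_c$ becomes $g_\circ-dt'^2$ on $\S^2\times\bigl(\R/\tfrac{2\pi}{c}\Z\bigr)$, while $g_1$ is $g_\circ-dt'^2$ on $\S^2\times(\R/2\pi\Z)$; the $c$-fold covering of circles $\R/2\pi\Z\to\R/\tfrac{2\pi}{c}\Z$ then induces a Lorentzian covering map $\Pi\colon(\S^2\times\S^1,g_1)\to(\S^2\times\S^1,g_c)$ whose deck group is $\Z_c$, acting by rotations of the $\S^1$-factor.

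Next I would prove that any Lorentzian covering $\Pi\colon\tilde M\to M$ with deck group $G$ induces a covering $\Pi_*\colon\mathcal N(\tilde M)\to\mathcal N(M)$ with deck group $G$ that carries the canonical contact structure $\mathcal H_{\tilde M}$ to $\mathcal H_M$. The key observation is that a null geodesic through $\Pi(x)$ has a unique lift through $x$, so $\Pi$ restricts to a diffeomorphism from the sky of $x$ onto the sky of $\Pi(x)$; since $d\Pi$ maps null Jacobi fields along a null geodesic $\gamma$ to null Jacobi fields along $\Pi_*\gamma$ and preserves their vanishing points, $d\Pi_*$ identifies the tangent space of the sky of $x$ inside $T_\gamma\mathcal N(\tilde M)$ with that of the sky of $\Pi(x)$ inside $T_{\Pi_*\gamma}\mathcal N(M)$. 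Because the contact hyperplane at a null geodesic is, by construction, the span of the tangent spaces of the skies through its points, it follows that $d\Pi_*$ sends $(\mathcal H_{\tilde M})_\gamma$ onto $(\mathcal H_M)_{\Pi_*\gamma}$.

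Finally I would identify the induced action of $G=\Z_c$ on $\mathcal N_1\cong ST\S^2$. Using the identification of $\mathcal N_1$ with $ST\S^2$ given by intersecting a null geodesic of $(\S^2\times\S^1,g_1)$ with the slice $\{t'=0\}$, one computes that the rotation $t'\mapsto t'+\tfrac{2\pi}{c}$ corresponds on $ST\S^2$ to the time-$\tfrac{2\pi}{c}$ geodesic flow $\phi_{2\pi/c}$; hence $\Pi_*$ is the quotient map $ST\S^2\to ST\S^2/\langle\phi_{2\pi/c}\rangle\cong L(2c,1)$, which is exactly the projection $r$ appearing in Theorem \ref{Bigone} and the discussion following it. Note that $r_*\chi$ is automatically well defined, since $\phi_{2\pi/c}$ preserves $\chi$ (the geodesic flow is the Reeb flow of the canonical contact form of $ST\S^2$). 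Combining the previous step with the equality $\mathcal H_1=\chi$ on $\mathcal N_1\cong ST\S^2$ already established then yields $\mathcal H_c=r_*\mathcal H_1=r_*\chi$.

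I expect the main obstacle to be the second step: making precise and proving that the canonical contact structure is natural under Lorentzian coverings. Concretely, one must check that $\Pi_*$ is a genuine smooth (covering) map of manifolds — it is the descent to $\mathcal N$ of the map between bundles of null directions induced by $d\Pi$, but compatibility with the equivalence relations of affine reparametrisation and orientation reversal that define $\mathcal N$ has to be verified — and that the sky-to-sky correspondence is a diffeomorphism compatible with the Jacobi-field model of $T\mathcal N$ in which $\mathcal H$ is defined. An alternative, more computational route, which also serves as a consistency check, is to use that $\mathcal H_c$ is the unique hyperplane field on $\mathcal N_c$ whose fibre over a null geodesic $\gamma$ is spanned by the tangent lines of the skies through the points of $\gamma$, and to verify directly that $r_*\chi$ enjoys this property, by writing null Jacobi fields along $\gamma$ in $(\S^2\times\S^1,g_c)$ in terms of Jacobi fields of the underlying great circle in $(\S^2,g_\circ)$, the flat $t'$-factor contributing no extra moduli.
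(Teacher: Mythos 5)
Your argument is correct in outline, but it follows a genuinely different route from the paper. The paper works entirely downstairs: it takes the explicit quotient $r:ST\S^2\to ST\S^2/\Z_c\cong\mathcal{N}_c$, verifies by hand (moving explicit curves through fibres and great circles) that $(r|_U)_*\chi_u=(r|_V)_*\chi_v$ for two preimages of the same class, and then shows directly that the tangent lines of the two skies $\mathfrak{S}_{(x,0)}$ and $\mathfrak{S}_{\gamma(\tau)}$ at $[u]$ project into $\lspan u\rspan^\perp$, concluding by a rank count; the case $c=1$ is the same sky computation without the quotient. You instead realise $(\S^2\times\S^1,g_c)$ as the quotient of $(\S^2\times\S^1,g_1)$ by the $\Z_c$-rotation of the time circle, prove a naturality statement (a time-orientation-preserving Lorentzian covering induces a local diffeomorphism of spaces of null geodesics carrying skies to skies, hence $\mathcal{H}$ to $\mathcal{H}$), identify the induced deck action on $\mathcal{N}_1\cong ST\S^2$ with the time-$\tfrac{2\pi}{c}$ geodesic flow, and quote the already established equality $\mathcal{H}_1=\chi$. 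Your route buys conceptual clarity and economy: well-definedness of $r_*\chi$ is immediate from Reeb-flow invariance rather than from the paper's curve-by-curve check, and the functoriality lemma is reusable beyond this example. What it costs is precisely the verification you flag: that $\Pi_*$ is a smooth local diffeomorphism compatible with the quotient description $\mathcal{N}=C^+/\mathcal{D}$ and with the sky/Jacobi-field model of $T\mathcal{N}$. One caveat: as stated, your general lemma (``any Lorentzian covering induces a covering of spaces of null geodesics with deck group $G$'') is too strong — a deck transformation could preserve a null geodesic setwise, making the induced action non-free — so you must check freeness in the case at hand; here it holds because the closed geodesics of the round $\S^2$ have length exactly $2\pi$, so the time-$\tfrac{2\pi j}{c}$ flow, $0<j<c$, is fixed-point free, which is exactly the freeness the paper also uses to form $L(2c,1)$. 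With that check added, your proof is complete and consistent with Theorem \ref{Bigone} and the paper's Theorem \ref{contactstructure}.
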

	
	The second part of this dissertation focuses on the recovery of a spacetime given its contact manifold of null geodesics. We discuss the three-dimensional case, for which we make use of Engel geometry. Given a four-dimensional manifold $M$, an Engel structure on $M$ is a rank-two distribution $\mathcal{D}$ satisfying that $\mathcal{E}: = [\mathcal{D}, \mathcal{D}]$ is a rank-three distribution such that $[\mathcal{E}, \mathcal{E}] = TM$, see \cite{engeldeform}. It can be shown that an Engel structure defines a line field $\mathcal{W}$, which completes the flag $\mathcal{W}\subset\mathcal{D}\subset\mathcal{E}\subset TM$. In the 1920s, E. Cartan discovered how, given a three-dimensional manifold equipped with a contact structure, one can define a canonical Engel manifold, nowadays called its Cartan prolongation. Following these ideas, R. Casals, J. Pérez, A. del Pino and F. Presas  defined a canonical Engel structure coming from a Lorentzian three-manifold, called its Lorentz prolongation \cite{existenceh}. We explore how Engel structures, prolongations, and their inverse operations (deprolongations) can be helpful in recovering a Lorentzian manifold from its space of null geodesics and its canonical contact structure.   
	
We study the case of separable spacetimes, which we define as those for which, locally, the spatial components of the metric are invariant under the negative eigendirection within the spacetime, and conversely, see Definition \ref{separable}. Our main contribution is the following result, which can be found in Theorems \ref{RecoverManifold} and \ref{Bigthree}.

\begin{Theorem*}
	Let $M$ be a three-dimensional separable spacetime. Let $\mathcal{P}C$ be the Lorentz prolongation of $M$, with Engel flag $\mathcal{W}\subset\mathcal{D}\subset\mathcal{E}\subset TM$. Then
	\[
\mathcal{N} = \mathcal{P}C/\mathcal{W}.	
	\]
	In addition, if $\mathcal{N}$ is a differentiable manifold and the projection $p:\mathcal{P}C\to \mathcal{P}C/\mathcal{W}$ is a submersion, the contact structure $\mathcal{H}$ on $\mathcal{N}$ is given by
	\[
	\mathcal{H} = p_*\mathcal{E}.
	\]
\end{Theorem*}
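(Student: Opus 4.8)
The plan is to study the Lorentz prolongation $\mathcal{P}C$ --- a four-manifold fibred over $M$ with fibre the circle of future null directions --- through its characteristic foliation, and to show that collapsing that foliation produces $(\mathcal{N},\mathcal{H})$. Concretely I would work in the separable local coordinates provided by Definition~\ref{separable}, coordinatise the circle fibre of $\mathcal{P}C\to M$ by the angle of the null direction it represents, and write down an explicit local frame realising the Engel flag $\mathcal{W}\subset\mathcal{D}\subset\mathcal{E}\subset T(\mathcal{P}C)$ of the Lorentz prolongation (see \cite{existenceh}): a frame $\{X_1,X_2\}$ for $\mathcal{D}$, the field $[X_1,X_2]$ completing $\mathcal{E}$, and $\mathcal{W}$ recovered as the Cauchy characteristic of $\mathcal{E}$, i.e.\ the unique line in $\mathcal{D}$ with $[\mathcal{W},\mathcal{E}]\subseteq\mathcal{E}$. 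The proof then splits into identifying the leaf space of $\mathcal{W}$ with $\mathcal{N}$ and identifying the descended distribution with $\mathcal{H}$.

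The technical core --- and the step I expect to be the main obstacle --- is the claim that in separable coordinates the projection to $M$ of an integral curve of $\mathcal{W}$ is a null geodesic of $g$, and conversely that the canonical lift $s\mapsto(\gamma(s),[\dot\gamma(s)])$ of a null geodesic $\gamma$ is everywhere tangent to $\mathcal{W}$. For a general Lorentzian three-manifold the characteristic line field of the Lorentz prolongation projects to geodesics of an auxiliary connection built into the construction, not to null geodesics of the metric; it is precisely the separability hypothesis --- the invariance of the spatial and temporal parts of $g$ along the negative eigendirection --- that forces these curves to coincide with the null geodesics. This is a finite computation with the explicit frame above, but it is where the hypothesis is genuinely used and hence the heart of the argument.

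Granting this, the first conclusion is short. Existence and uniqueness of geodesics with prescribed initial point and initial (null) direction show that through every point of $\mathcal{P}C$ there passes exactly one canonical lift of a null geodesic, so these lifts are precisely the leaves of the one-dimensional --- hence automatically integrable --- foliation tangent to $\mathcal{W}$. Affine reparametrisations of a geodesic have the same image in $\mathcal{P}C$ once the velocity is projectivised, so the leaf space is in canonical bijection with the set of unparametrised future-directed null geodesics, that is with $\mathcal{N}$; with the quotient smooth structure (when one exists) this is $\mathcal{N}=\mathcal{P}C/\mathcal{W}$.

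For the contact structure, assume $\mathcal{N}$ is a manifold and $p$ is a submersion, so $\ker dp=\mathcal{W}$. Since $\mathcal{W}\subset\mathcal{E}$ and $[\mathcal{W},\mathcal{E}]\subseteq\mathcal{E}$, the distribution $\mathcal{E}$ is invariant along the fibres of $p$ and therefore descends to a well-defined distribution $p_*\mathcal{E}$ on $\mathcal{N}$; as $\mathcal{W}$ has rank $1$, lies in $\mathcal{E}$, and equals $\ker dp$, the rank of $p_*\mathcal{E}$ is $3-1=2$, the rank of a hyperplane field on the three-manifold $\mathcal{N}$. To identify it with $\mathcal{H}$, fix a null geodesic $\gamma$ and a point $x\in\gamma$. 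The restriction of $p$ to the fibre $\mathcal{P}C_x$ is a diffeomorphism onto the sky $\mathfrak{s}_x$ --- distinct future null directions at $x$ give distinct null geodesics through $x$ --- so $dp$ maps the vertical tangent line of $\mathcal{P}C_x$ at $(x,[\dot\gamma(x)])$, which is a section of $\mathcal{E}$ by the explicit frame of the first step, onto $T_{[\gamma]}\mathfrak{s}_x$. Hence $T_{[\gamma]}\mathfrak{s}_x\subseteq(p_*\mathcal{E})_{[\gamma]}$ for every $x\in\gamma$. By the standard description of the canonical contact structure, $\mathcal{H}_{[\gamma]}$ is spanned by the tangent lines to the skies through $[\gamma]$, namely the $T_{[\gamma]}\mathfrak{s}_x$ with $x\in\gamma$; therefore $\mathcal{H}_{[\gamma]}\subseteq(p_*\mathcal{E})_{[\gamma]}$, and equality of the two rank-$2$ distributions gives $\mathcal{H}=p_*\mathcal{E}$.
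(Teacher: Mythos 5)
Your proposal is correct, and for the first half it is essentially the paper's own argument: the paper likewise passes to separable coordinates, computes an explicit generator $Z$ of the characteristic line field $\mathcal{W}$ (Lemma \ref{lemmasep}), and verifies by the same finite computation that the integral curves of $Z$ are exactly the canonical lifts of the null geodesics, so that $\mathcal{P}C/\mathcal{W} = (C^+/\Delta)/X_g = \mathcal{N}$. You correctly locate the role of separability: without it the kernel acquires a $\theta$-dependent drift that destroys the identification with the geodesic spray. Where you genuinely diverge is the contact-structure statement. The paper proves $\mathcal{H}=p_*\mathcal{E}$ by a direct computation: it transports the vertical line $\partial_\theta$ at $\dot\mu(s)$ back to $\dot\mu(0)$ with the flow $\Phi^Z_{-s}$, takes the limit $s\to 0$ to produce the bracket $[\partial_\theta,Z]=\dot X$ modulo $\partial_\theta$, and concludes that the span of the two sky tangents is exactly $T_{\dot\mu(0)}p(\mathcal{E})$. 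You instead argue softly: each sky tangent is the $dp$-image of a vertical line, which lies in $\mathcal{E}$, hence $\mathcal{H}_\gamma\subseteq (p_*\mathcal{E})_\gamma$, and equality follows because both distributions have rank $2$. This is a valid shortcut, but note what each route buys: your argument leans on the external facts that $\mathcal{H}$ is a well-defined rank-two distribution (the verification delegated to \cite{bautista}) and that $dp(\partial_\theta)\neq 0$ (i.e.\ $\mathcal{W}$ transverse to $\partial_\theta$, which your explicit frame does give), whereas the paper's limit computation exhibits concretely how the tangents to nearby skies become transverse and fill out $p_*\mathcal{E}$, giving an independent, self-contained identification that does not invoke the rank of $\mathcal{H}$. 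Both routes need the invariance of $\mathcal{E}$ under the flow of $\mathcal{W}$ to make $p_*\mathcal{E}$ well defined, which you state and the paper takes from \cite{adachi}.
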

	 
	This theorem allows us to obtain the contact manifold of null geodesics solely with the information given by the Engel flag on the four-dimensional manifold. In addition, we discuss how this approach can be useful in recovering a Lorentzian manifold from its space of null geodesics and its skies. Further research is needed to find suitable hypotheses on the contact manifolds ensuring that the arguments hold, but our initial results are encouraging.
	
\vspace{1cm}	
	This thesis is structured as follows. In Chapter \ref{Prels}, we introduce the main concepts and results in contact and Lorentzian geometry, the space of null geodesics and Engel structures. Chapter \ref{Model} studies the model $\S^2\times \S^1$ with the family of Lorentzian metrics $\{g_c = g_\circ-\frac{1}{c^2}dt^2 \}_{c\in\N^+}$, making use of the division algebra of quaternions and their relation to $ST\S^2$ and the Hopf fibration. Finally, in Chapter \ref{ProlandDeprolChap}, we explore the applicability of Engel geometry in the theory, arguing how one can obtain the spaces of null geodesics of a separable spacetime and its contact structure from the Lorentz prolongation of the spacetime. We also explore how this procedure allows us to obtain a Lorentzian manifold with a particular space of null geodesics and contact structure.
\chapter{Preliminaries}
\label{Prels}
\section{Introduction to Contact Geometry}
\label{IntroToContact}

In Section \ref{IntroToContact} we recall the main definitions and results of contact geometry and topology that will be needed throughout this work. This section does not intend to be a thorough description of the field, we refer to \cite{geiges} for further details.

Let $M$ be a differentiable manifold of dimension $m$ and let $TM$ denote its tangent bundle.

\begin{Definition}
	\label{Codimone}
	A \textbf{codimension one distribution} (or \textbf{field of hyperplanes}) on $M$ is a smooth subbundle $\xi\subset TM$ of codimension one. We will write $X\in\xi$ to denote that $X$ is a smooth section of $TM$ with $X(x)\in\xi_x$ for all $x\in M$. 
	A rank $n<m$ \textbf{distribution} on $M$ is a smooth subbundle $\xi\subset TM$ of rank $n$.
\end{Definition}

It should be clear what is meant by \textit{smooth} in Definition \ref{Codimone}. We demand that, for all $x\in M$, there exists a neighbourhood $U\subseteq M$ of $x$ and $n$ vector fields $X_1,\ldots,X_{n}\in\mathfrak{X}(U)$ that span $\xi|_{U}$, that is, such that $\xi_y = \lspan X_1(y),\ldots,X_{n}(y)\rspan $ for all $y\in U$. All through this work, all objects on a manifold will be assumed to be smooth, unless stated otherwise.

A key observation that will be useful in defining contact structures is that one can always regard codimension one distributions as the kernel of a one-form on $M$, at least locally.

\begin{Lemma}
	\label{kerneloneform}
	\cite[Lem. 1.1.1]{geiges} Locally, a codimension one distribution $\xi$ can be written as the kernel of a differential one-form $\alpha$. In addition, it is possible to write $\xi = \ker\alpha$ globally if and only if $\xi$ is coorientable,  that is, the quotient bundle $TM/\xi$ is trivial.
\end{Lemma}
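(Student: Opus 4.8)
The plan is to treat the local statement with a moving-frame argument and the global equivalence by means of a Riemannian metric, identifying $TM/\xi$ with the normal line bundle.

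First, for the local claim, fix $x\in M$. By the smoothness condition spelled out right after Definition \ref{Codimone}, there is a neighbourhood $U$ of $x$ and vector fields $X_1,\ldots,X_{m-1}\in\mathfrak{X}(U)$ spanning $\xi|_U$. Shrinking $U$, I would complete these to a local frame $X_1,\ldots,X_{m-1},X_m$ of $TM$: extend the basis $\{X_i(x)\}$ of $\xi_x$ to a basis of $T_xM$ by one extra vector, take the corresponding constant vector field in a chart around $x$, and use that linear independence of the resulting $m$ vector fields is an open condition. Let $\alpha_1,\ldots,\alpha_m$ be the dual coframe on $U$ and set $\alpha:=\alpha_m$. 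Then at every $y\in U$ one has $\alpha_y\neq 0$, $\alpha_y(X_i(y))=0$ for $i<m$ and $\alpha_y(X_m(y))=1$, so $\ker\alpha_y=\lspan X_1(y),\ldots,X_{m-1}(y)\rspan=\xi_y$; hence $\xi|_U=\ker\alpha$. (An equivalent route: pick any local Riemannian metric on $U$, let $N$ be a local unit normal field to $\xi$, and take $\alpha=g(N,\cdot)$.)

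For the global statement, first suppose $\xi=\ker\alpha$ for a globally defined one-form $\alpha$. Since $\ker\alpha_x$ has codimension one at every $x$, the form $\alpha$ vanishes nowhere, so $\alpha\colon TM\to M\times\R$ is a surjective bundle morphism onto the trivial line bundle with kernel $\xi$; it therefore descends to a bundle isomorphism $TM/\xi\cong M\times\R$, i.e. $TM/\xi$ is trivial. Conversely, assume $TM/\xi$ is trivial. Choose a Riemannian metric $g$ on $M$ (which exists globally by a partition-of-unity argument) and let $\xi^\perp\subset TM$ be the $g$-orthogonal complement, a line bundle. The composite $\xi^\perp\hookrightarrow TM\to TM/\xi$ is a fibrewise isomorphism (since $\xi\cap\xi^\perp=0$ and both are line bundles), so $\xi^\perp$ is trivial as well and admits a nowhere-vanishing section $N$. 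Put $\alpha:=g(N,\cdot)$. At each $x$ we have $\xi^\perp_x=\R N_x$ and $\xi_x=(\xi^\perp_x)^\perp$, whence $\ker\alpha_x=\{v\in T_xM: g(N_x,v)=0\}=\xi_x$, so $\xi=\ker\alpha$ globally.

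The argument is essentially routine; the only points that need care are the completion of the spanning vector fields to a local frame in the first part (using that nondegeneracy of a frame is an open condition), and, in the second part, the identification of the quotient bundle $TM/\xi$ with the normal line bundle $\xi^\perp$, which is what converts a trivialization of $TM/\xi$ into the nowhere-vanishing normal field $N$ used to build $\alpha$. Since the existence of a global Riemannian metric is standard, I do not expect a genuine obstacle here.
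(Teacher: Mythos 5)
Your proof is correct. Note that the paper does not give its own proof of this lemma at all---it simply cites \cite[Lem.~1.1.1]{geiges}---and your argument (a local dual coframe for the local statement; for the global one, observing that a global defining form induces an isomorphism $TM/\xi\cong M\times\R$, and conversely using a Riemannian metric to identify $TM/\xi$ with the normal line bundle $\xi^\perp$, whose nowhere-vanishing section $N$ gives $\alpha=g(N,\cdot)$ with $\ker\alpha=\xi$) is precisely the standard argument found in that reference, with all the relevant points (openness of linear independence, fibrewise isomorphism of the line bundles) handled correctly.
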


For the rest of this section, all fields of hyperplanes will be assumed to be coorientable unless specified. We can now define what a contact structure on an odd-dimensional manifold $M$ is.

\begin{Definition}
	Let $M$ be a $(2n+1)$-dimensional manifold. Let $\xi = \ker\alpha\subset TM$ be a codimension one distribution on $M$ such that 
\[
\alpha\wedge(d\alpha)^n\neq 0,
\]	
that is, the top form $\alpha\wedge(d\alpha)^n$ vanishes nowhere. The one-form $\alpha$ is called a \textbf{contact form}, and $\xi = \ker\alpha$ is called a \textbf{contact distribution}. The pair $(M,\xi)$ is a \textbf{contact manifold}.
\end{Definition}

Note that if $\xi$ is a contact distribution on $M$ such that $\xi = \ker\alpha$ globally, the top form $\alpha\wedge(d\alpha)^n$ is a volume form on $M$, meaning that $M$ must be orientable.

Let us now provide some intuition for the definition of contact distributions. Let $\xi = \ker\alpha$ be a field of hyperplanes on $M$. Then, $\xi$ is said to be integrable if, through any point $x\in M$, there exists a submanifold $N\subset M$ with the property that $T_yN = \xi_y$ for all $y\in N$. Frobenius' Theorem gives a characterisation of such distributions.
\begin{Theorem}[Frobenius' Theorem]
	\label{Frobenius}
	\cite[Prop 1.59 and Thm. 1.60]{warner} The following conditions are equivalent.
	\begin{enumerate}
		\item The distribution $\xi$ is integrable.
		\item For any $X,Y\in\xi$, it holds that $[X,Y]\in\xi$, where $[-,-]$ denotes the Lie bracket.
	\end{enumerate}
\end{Theorem}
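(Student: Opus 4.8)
The plan is to prove the two implications separately; write $n$ for the rank of $\xi$ and $m=\dim M$, and note that nearly all the content sits in the direction (ii)$\Rightarrow$(i). For (i)$\Rightarrow$(ii): fix $X,Y\in\xi$ and a point $x\in M$, and let $N\subset M$ be an integral submanifold through $x$, so $T_yN=\xi_y$ for every $y\in N$. Then $X$ and $Y$ are tangent to $N$ along $N$, hence restrict to vector fields on $N$, and the Lie bracket of two vector fields tangent to a submanifold is again tangent to it (the bracket is a local expression in derivatives along $N$ only); thus $[X,Y]_x\in T_xN=\xi_x$. As $x$ was arbitrary and $M$ is covered by integral submanifolds, $[X,Y]\in\xi$.

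For (ii)$\Rightarrow$(i), the statement is local, so I would fix $p\in M$ and aim to build a chart $(U;x^1,\dots,x^m)$ near $p$ in which $\xi|_U=\langle\partial_{x^1},\dots,\partial_{x^n}\rangle$; the slices $\{x^{n+1}=c^{n+1},\dots,x^m=c^m\}$ are then the integral submanifolds. The key intermediate step is to produce a \emph{commuting} local frame of $\xi$. Choosing coordinates $(y^1,\dots,y^m)$ around $p$ with $\xi_p=\langle\partial_{y^1},\dots,\partial_{y^n}\rangle$, on a small enough neighbourhood $\xi$ stays transverse to $\langle\partial_{y^{n+1}},\dots,\partial_{y^m}\rangle$, hence admits a frame in normal form $Z_i=\partial_{y^i}+\sum_{j>n}a_i^j\,\partial_{y^j}$, $i=1,\dots,n$. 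Since the coefficients of $\partial_{y^1},\dots,\partial_{y^n}$ in each $Z_i$ are constants, $[Z_i,Z_k]$ has no $\partial_{y^1},\dots,\partial_{y^n}$ component; but hypothesis (ii) forces $[Z_i,Z_k]\in\langle Z_1,\dots,Z_n\rangle$, and comparing the $\partial_{y^1},\dots,\partial_{y^n}$ components on both sides gives $[Z_i,Z_k]=0$. Finally I would invoke the classical fact that a pointwise linearly independent family of pairwise commuting vector fields can be simultaneously straightened to coordinate vector fields --- proved by successively flowing along $Z_1,\dots,Z_n$ and using that their flows commute --- yielding exactly the desired chart.

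As an alternative to the commuting-frame step one can induct on $n$: the base case $n=1$ is the flow-box theorem; for the inductive step one rectifies $X_1$ to $\partial_{x^1}$, replaces $X_i$ ($i\geq2$) by $\tilde X_i=X_i-X_i(x^1)\,\partial_{x^1}$ so each $\tilde X_i$ annihilates $x^1$, uses (ii) to check that the $\tilde X_i$ restrict to an involutive rank-$(n-1)$ distribution on the slice $\{x^1=0\}$, applies the inductive hypothesis there, and spreads the resulting integral submanifold out along the flow of $X_1$, which preserves $\xi$ because $X_1\in\xi$ and $\xi$ is involutive. Either way, the only genuine obstacle I anticipate is the analytic input: the existence of local flows and the flow-box / simultaneous-rectification theorem. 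The rest --- the bracket manipulations and the bookkeeping of which coefficients vanish --- is routine once involutivity is in hand; the real content is that local solvability of ODEs promotes the infinitesimal condition (ii) to the conclusion (i) on a full coordinate chart. For these standard ingredients I would cite \cite{warner}.
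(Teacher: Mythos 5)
The paper offers no proof of Theorem \ref{Frobenius} at all: it is quoted from Warner (Prop.\ 1.59 and Thm.\ 1.60) and used as a black box, so there is no internal argument to compare yours against. Your proposal is correct and is essentially the standard textbook proof that the citation delegates to. For (i)$\Rightarrow$(ii), note that the paper's definition of integrability guarantees an integral submanifold through \emph{every} point, so your pointwise argument covers all of $M$; the tangency of the bracket is most cleanly justified by saying that $X|_N$ and $Y|_N$ are $\iota$-related to $X$ and $Y$ (for $\iota\colon N\hookrightarrow M$ the inclusion), hence $[X|_N,Y|_N]$ is $\iota$-related to $[X,Y]$, which is the precise form of your parenthetical remark. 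For (ii)$\Rightarrow$(i), the normal-form frame $Z_i=\partial_{y^i}+\sum_{j>n}a_i^j\,\partial_{y^j}$, the observation that involutivity together with the vanishing of the $\partial_{y^1},\dots,\partial_{y^n}$ components forces $[Z_i,Z_k]=0$, and the simultaneous straightening of commuting, pointwise independent vector fields is exactly the classical route (your inductive alternative via the flow-box theorem is equally standard, and closer to Warner's own treatment). The analytic inputs you leave uncited in detail --- local flows and the rectification theorem --- are standard and are contained in the very reference the paper invokes, so the proposal stands as a complete and faithful filling-in of the omitted proof.
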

Condition $ii)$ of Frobenius' Theorem can be rewritten in terms of the defining one-form $\alpha$ of a field of hyperplanes, which gives the following result.
\begin{Corollary}
	The codimension one distribution $\xi = \ker\alpha$ is integrable if and only if 
	\[
	\alpha\wedge d\alpha \equiv 0,
	\]
	where $\equiv$ denotes that the differential form vanishes everywhere.
\end{Corollary}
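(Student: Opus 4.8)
The plan is to translate condition $ii)$ of Frobenius' Theorem into a statement about $\alpha$ and $d\alpha$, using the intrinsic formula for the exterior derivative of a one-form. Recall that for any one-form $\alpha$ and vector fields $X,Y$ on $M$,
\[
d\alpha(X,Y) = X(\alpha(Y)) - Y(\alpha(X)) - \alpha([X,Y]).
\]
First I would restrict attention to sections $X,Y\in\xi$. Since $\xi = \ker\alpha$, we have $\alpha(X)\equiv\alpha(Y)\equiv 0$, so the first two terms vanish and the formula collapses to $d\alpha(X,Y) = -\alpha([X,Y])$. Hence, for $X,Y\in\xi$, the bracket $[X,Y]$ lies in $\xi=\ker\alpha$ if and only if $d\alpha(X,Y)=0$. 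By Theorem \ref{Frobenius}, this already shows that $\xi$ is integrable if and only if $d\alpha$ vanishes on every pair of sections of $\xi$, i.e.\ $d\alpha|_\xi\equiv 0$.

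The remaining task is to check that $d\alpha|_\xi\equiv 0$ is equivalent to the pointwise vanishing of the three-form $\alpha\wedge d\alpha$. This is a linear-algebra statement that I would verify on a convenient local frame: around any point, choose vector fields $e_1,\dots,e_{m-1}\in\xi$ spanning $\xi$ together with a vector field $R$ satisfying $\alpha(R)=1$, so that locally $TM = \xi\oplus\lspan R\rspan$. Using
\[
(\alpha\wedge d\alpha)(X,Y,Z) = \alpha(X)\,d\alpha(Y,Z) - \alpha(Y)\,d\alpha(X,Z) + \alpha(Z)\,d\alpha(X,Y),
\]
one evaluates on triples from this frame: on $(e_i,e_j,e_k)$ every term carries a factor $\alpha(e_\bullet)=0$, so the result is $0$ automatically; on $(e_i,e_j,R)$ the only surviving term is $\alpha(R)\,d\alpha(e_i,e_j) = d\alpha(e_i,e_j)$. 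Therefore $\alpha\wedge d\alpha\equiv 0$ if and only if $d\alpha(e_i,e_j)=0$ for all $i,j$, which is precisely $d\alpha|_\xi\equiv 0$. Combining the two equivalences yields the Corollary.

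Since the argument is essentially a bookkeeping computation, there is no serious obstacle; the only point requiring a little care is the last step, where one must pass from the full three-form $\alpha\wedge d\alpha$ on $TM$ to the restriction of $d\alpha$ to the hyperplane field, which is why introducing an auxiliary transverse vector field $R$ with $\alpha(R)=1$ (a local Reeb-type field) is the natural device. One may also observe in passing that the condition $\alpha\wedge d\alpha\equiv 0$ does not depend on the choice of defining one-form for $\xi$, since replacing $\alpha$ by $f\alpha$ with $f$ nowhere zero multiplies $\alpha\wedge d\alpha$ by $f^2$.
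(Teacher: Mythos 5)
Your proof is correct and follows essentially the same route as the paper: both arguments rest on the identity $d\alpha(X,Y) = -\alpha([X,Y])$ for sections $X,Y\in\xi$ (so that Frobenius integrability becomes $d\alpha|_\xi\equiv 0$) together with a pointwise evaluation of $\alpha\wedge d\alpha$ on a local frame adapted to $\xi$ plus a transverse field. The paper handles one direction with interior products instead of your normalized field $R$ with $\alpha(R)=1$, but this is only a cosmetic difference in bookkeeping.
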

\begin{proof}
	Assume $\alpha\wedge d\alpha\equiv 0$ holds and let $X,Y\in\xi$, that is, $\alpha(X) =\alpha(Y) = 0$. Then,
	\[
	\alpha([X,Y]) = X\alpha(Y)-Y\alpha(X)-d\alpha(X,Y) = -d\alpha(X,Y).
	\]
	Note also that
	\[
	0 = \iota_X(\alpha\wedge d\alpha) = \alpha\wedge\iota_X d\alpha,
	\]
	and hence
	\[
	0 = \iota_Y\big(\iota_X(\alpha\wedge d\alpha)\big) = \iota_Y(\alpha\wedge\iota_X d\alpha) = -\alpha\wedge d\alpha(X,Y),
	\]
	which implies that $d\alpha(X,Y) = 0$. Hence, $[X,Y]\in\xi$, as needed.
	
	Assume now that condition $ii)$ in Frobenius' Theorem holds. Let $x\in M$ and $U$ be an open subset of $M$ containing $x$ such that one can define vector fields $X_1,\ldots,X_{m-1}\in\xi$, and $Y\in\mathfrak{X}(U)$ satisfying $T_xM = \lspan X_1(y)\ldots,X_{m-1}(y),Y(y) 
	\rspan$ for all $y\in U$. It is clear that, for any $i,j,k\in\{1,\ldots,m-1\}$, one has $(\alpha\wedge d\alpha)(X_i,X_j,X_k) = 0$, since $\alpha(X_i)$ vanishes for all $i = 1,\ldots, m-1$. Now,
	\[
	(\alpha\wedge d\alpha)(X_i,X_j,Y) = \frac{\alpha(Y)}{2}\Big(d\alpha(X_i,X_j)-d\alpha(X_j,X_i)\Big) = \frac{\alpha(Y)}{2}\Big(-\alpha([X_i,X_j])+\alpha([X_j,X_i])\Big) = 0.
	\]
	Hence, $(\alpha\wedge d\alpha)(y) = 0$ for all $y\in U$ and, since $x$ is arbitrary, we find that $\alpha\wedge d\alpha = 0$.
\end{proof}

Contact distributions are, in a sense, the opposite of integrable distributions, for which they are sometimes referred to as \textbf{maximally non-integrable distributions}, and the condition $\alpha\wedge(d\alpha)^n \neq 0$ is called the \textbf{maximally non-integrable condition}.
\begin{Example}
	Let $M = \R^{2n+1}$ with cartesian coordinates $(x_1,y_1,\ldots,x_n,y_n,z)$ and define the one-form
	\[
	\alpha = dz + \sum\limits_{j= 1}^n x_j dy_j.
	\]
	Let us compute $d\alpha = \sum\limits_{j= 1}^ndx_j\wedge dy_j$, and hence
	\[
	\alpha\wedge(d\alpha)^n = dz\wedge dx_1\wedge dy_1\wedge\cdots\wedge dx_n\wedge dy_n \neq0,
	\]
	which implies that $\alpha$ is a contact form. The field of hyperplanes $\xi = \ker\alpha$ is known as the \textbf{standard contact distribution} on $\R^{2n+1}$.
\end{Example}
\begin{Example}
	\label{ExampleUTB}
	Let $(S, g)$ be a Riemannian surface and consider its unit tangent bundle
	\[
	STS = \{u\in TS\ |\ g(u,u) = 1\},
	\]
	which inherits a structure of $\S^1$-bundle over $S$. Indeed, if $x\in S$ and $(u,v)$ is an orthonormal basis of $T_xS$, the map
	\[
	\begin{array}{cccc}
	f:&\S^1&\to&ST_xS\\
	&\theta&\mapsto&u\cos\theta +v\sin\theta
	\end{array}
	\]
	is a diffeomorphism.
	
	It is known \cite[p. 27]{geiges} that $g$ allows us to define a diffeomorphism $\Psi$ from the tangent bundle $TS$ to the cotangent bundle $T^*S$ which is fibrewise given by
	\[
	\begin{array}{cccc}
	\Psi_x:&T_xS&\to&T^*_xS\\
	&u&\mapsto&g(u,-).
	\end{array}
	\]
	Such a diffeomorphism defines a metric $g^*$ on $T^*S$ given by $g^*(\omega_1,\omega_2) = g(\Psi^{-1}(\omega_1), \Psi^{-1}(\omega_2))$, which in turn allows us to define the unit cotangent bundle as
	\[
	ST^*S = \{\omega\in T^*S\ |\ g^*(\omega, \omega) = 1\}.
	\]
	
	The unit cotangent bundle $ST^*S$ carries a canonical contact structure defined as follows, see \cite[Ex. 2.2]{chernov} or \cite{canonicalcontactunit}. Let $\tilde{\pi}:ST^*S\to S$ denote the canonical projection. A point $\omega\in ST^*S$ may be regarded as a linear form $\tilde{\omega}\in T^*_{\tilde{\pi}(\omega)}S$ up to multiplication by a positive scalar. Thus, $\tilde{\omega}$ is totally determined by the cooriented hyperplane $l_\omega = \ker\tilde{\omega}\subset T_{\tilde{\pi}(\omega)}S$, where the coorientation is given by the half-space on which $\tilde{\omega}$ is positive.  The canonical contact distribution on $ST^*S$ is 
	\[
	\xi_\omega = (T_\omega\tilde{\pi})^{-1}(l_\omega).
	\]
	
	Let us show that $\xi_\omega$ is indeed a contact distribution. Let $(x_1,x_2)$ be orthogonal coordinates on an open subset $U\subset S$, which always exist due to the existence of isothermal coordinates \cite{isothermal}. Isothermal coordinates are those for which the metric is pointwise proportional to the Euclidean metric. Let $(\partial_{x_{1}}, \partial_{x_2})$ be the basis of coordinate vectors and $(dx_1, dx_2)$ be its dual basis pointwise. Let $||\partial_{x_i}|| = \sqrt{g(\partial_{x_i}, \partial_{x_i})}$ and $||dx_i|| = \sqrt{g^*(dx_i, dx_i)}$. Note that 
	\[
	g\Big(\partial_{x_1}, \frac{\partial_{x_1}}{||\partial_{x_1}||^2}\Big) = 1 \hspace*{1cm}\text{and}\hspace{1cm}g\Big(\partial_{x_2}, \frac{\partial_{x_1}}{||\partial_{x_1}||^2}\Big) = 0,
	\]
	which implies that $\Psi^{-1}(dx_1) = \frac{\partial_{x_1}}{||\partial_{x_1}||^2}$, and similarly for $\Psi^{-1}(dx_2) = \frac{\partial_{x_2}}{||\partial_{x_2}||^2}$. Hence,
	\[
	||dx_1|| = \sqrt{g\Bigg(\frac{\partial_{x_1}}{||\partial_{x_1}||^2},\frac{\partial_{x_1}}{||\partial_{x_1}||^2}\Bigg)} = \frac{1}{||\partial_{x_1}||},
	\]
	and $||dx_2|| = \frac{1}{||\partial_{x_2}||}$. 
	
	Let now $\theta$ be the coordinate on the fibres of $ST^*S$, that is, the triplet $(x_1,x_2,\theta)$ represents the point $\omega = \cos\theta \frac{dx_1}{||dx_1||}+\sin\theta \frac{dx_2}{||dx_2||}\in ST_{(x_1,x_2)}^*S$. It is clear that $l_{\omega} = \lspan \sin\theta\frac{\partial_{x_1}}{||\partial_{x_1}||}-\cos\theta\frac{\partial_{x_2}}{||\partial_{x_2}||}\rspan\subset T_{(x_1,x_2)}S$, and hence 
	\[
	\xi_\omega = \lspan \sin\theta\frac{\partial_{x_1}}{||\partial_{x_1}||}-\cos\theta\frac{\partial_{x_2}}{||\partial_{x_2}||},\partial_\theta\rspan,
	\]
	where we have made an abuse of notation denoting by $\partial_{x_i} $ the coordinate vector fields on $ST^*S$ as well.
	
	It is now clear that, on $\tilde{\pi}^{-1}(U)$, we can write $\xi = \ker\alpha$ with 
	\[
	\alpha =\cos\theta \frac{dx_1}{||dx_1||}+\sin\theta \frac{dx_2}{||dx_2||},
	\]
	again by making an abuse of notation. Then, 
	\[
	d\alpha = -\frac{\sin\theta}{||dx_1||} d\theta\wedge dx_1+\frac{\cos\theta}{||dx_2||} d\theta\wedge dx_2 + \cos\theta\frac{\partial}{\partial x_2}\frac{1}{||dx_1||}dx_2\wedge dx_1 + \sin\theta\frac{\partial}{\partial x_1}\frac{1}{||dx_2||}dx_1\wedge dx_2
	\]
	 and
	\[
	\alpha\wedge d\alpha = -\frac{\sin^2\theta}{||dx_1||||dx_2||} dx_2\wedge d\theta\wedge dx_1+\frac{\cos^2\theta}{||dx_1||||dx_2||} dx^1\wedge d\theta\wedge dx_2 = -\frac{1}{||dx_1||||dx_2||}dx_1\wedge dx_2\wedge d\theta\neq0,
	\]
	which shows that $\xi$ is indeed a contact structure on $ST^*S$.
	
	Note now that the pushforward
	\[
	\chi = (\Psi^{-1})_* \xi
	\]
	defines a contact structure on $STS$, which we will call the \textbf{canonical contact structure} on $STS$. Let $\pi:STS\to S$ be the canonical projection and $u\in STS$. Then
	\begin{align*}
		\chi_u =& \big((\Psi^{-1})_*\xi\big)_u \\ = &  T_{\Psi(u)}(\Psi^{-1})(\xi_{\Psi(u)}) \\ = & T_{\Psi(u)}(\Psi^{-1})\circ (T_{\Psi(u)}\tilde{\pi})^{-1}(l_{\tilde{\pi}(u)}) \\ = & \big(T_{u}(\Psi\circ\tilde{\pi} )\big)^{-1}\big(\ker g(u,-)\big) \\ = &  (T_u\pi)^{-1}(\lspan u\rspan^\perp),
	\end{align*}
where $\lspan u\rspan^\perp$ denotes the orthogonal subspace to $u$ in $T_{\pi(u)}S$ defined by $g$. 
\end{Example}
\section{Introduction to Lorentzian Manifolds}
\label{IntroToLorentz}
Section \ref{IntroToLorentz} introduces the basics of pseudo-Riemannian manifolds and geodesics. Let $M$ be a connected manifold of dimension $m$.
\begin{Definition}
	A \textbf{pseudo-Riemannian metric} on $M$ is a collection $\{g_x\}_{x\in M}$ of  non-degenerate symmetric bilinear forms on the tangent bundle of $M$, that is, for any $x\in M$, one has
	\[
	g_x:T_xM\times T_xM\to\R
	\]
	such that for all $u,v,v'\in T_xM$ and $a,b\in\R$
	\begin{enumerate}
		\item $g_x(u,v) = g_x(v,u)$,
		\item $g_x(u,av+bv') = ag_x(u,v)+bg_x(u,v')$,
		\item if $g_x(u,v) = 0$ for all $v\in T_xM$, then $u = 0$.
	\end{enumerate}
In addition, it is required that $g_x$ varies smoothly with respect to the base point $x$. That is, for any $X,Y\in\mathfrak{X}(M)$, the function $x\mapsto g_x(X(x), Y(x))$ is smooth. We will simply denote the metric by $g$. The pair $(M, g)$ is called a \textbf{pseudo-Riemannian manifold}.
\end{Definition}

Note that it is not required for the metric to be positive definite. Actually, a Lorentzian manifold will be a manifold equipped with a metric that has one negative direction. The next few results will allow us to formalise this concept.

\begin{Theorem}[Sylvester's law of inertia]
	\cite[Prop. 2.65]{lee} Let $h$ be a non-degenerate symmetric bilinear form on a finite-dimensional vector space $V$. Let $A$ be the matrix representation of $h$ in some basis of $V$. Then, $A$ is diagonalisable and the number of positive and negative eigenvalues of $A$ is independent of the choice of basis. 
\end{Theorem}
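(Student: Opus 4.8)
The plan is to prove the two assertions in turn: first that $h$ can be represented by a diagonal matrix in a suitable basis, and then that the number of positive diagonal entries — equivalently, the number of positive eigenvalues — is an invariant of $h$, with the statement about negative eigenvalues following by non-degeneracy.

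For the diagonalization, I would proceed by induction on $n = \dim V$. Since $h$ is non-degenerate it is not identically zero, and by polarization (using $h(u+v,u+v) = h(u,u) + 2h(u,v) + h(v,v)$) there must be a vector $e_1$ with $h(e_1,e_1) \ne 0$. Then $V = \langle e_1 \rangle \oplus e_1^{\perp_h}$, where $e_1^{\perp_h} = \{v : h(e_1,v) = 0\}$ is a hyperplane on which the restriction of $h$ is again non-degenerate (any $v$ in the radical of that restriction would also be $h$-orthogonal to $e_1$, hence to all of $V$, forcing $v = 0$). Applying the inductive hypothesis to this restriction produces an $h$-orthogonal basis of $e_1^{\perp_h}$, which together with $e_1$ diagonalizes $h$ on $V$. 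In matrix terms this exhibits $A$ as congruent to a diagonal matrix; alternatively, since $A$ is real and symmetric, the spectral theorem directly gives an orthogonal $O$ with $O^{-1}AO$ diagonal, so $A$ is diagonalisable in the usual sense and its eigenvalues are real.

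For the invariance, I would fix an $h$-orthogonal basis $e_1,\dots,e_n$, let $p$ be the number of indices with $h(e_i,e_i) > 0$, reorder so these are $e_1,\dots,e_p$, and set $V_+ = \langle e_1,\dots,e_p\rangle$ and $V_- = \langle e_{p+1},\dots,e_n\rangle$; then $h$ is positive definite on $V_+$ and negative definite on $V_-$, with $\dim V_+ + \dim V_- = n$ by non-degeneracy. The key step is to show that $p$ equals the maximal dimension of any subspace of $V$ on which $h$ is positive definite — a quantity that makes no reference to a basis. One inequality is witnessed by $V_+$ itself; for the other, if $W$ is a subspace with $h|_W$ positive definite and $\dim W > p$, then $\dim W + \dim V_- > n$, so $W \cap V_- \ne \{0\}$, and any nonzero $w$ in that intersection would satisfy $h(w,w) > 0$ and $h(w,w) < 0$ at once, a contradiction. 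Hence the count of positive diagonal entries is the same for every $h$-orthogonal basis, and likewise the count of negative ones; translating back to the matrix $A$, the numbers of positive and of negative eigenvalues do not depend on the choice of basis.

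The Gram–Schmidt-type induction is routine; the step carrying the real content is the dimension count forcing $W \cap V_- \ne \{0\}$, and that is the only place where a genuine idea is needed. The one point worth stating carefully is the interpretation of the conclusion: a change of basis acts on the matrix of a bilinear form by congruence, $A \mapsto P^{\top} A P$, not by similarity, so the eigenvalues themselves generally change — what is invariant, and what the argument establishes, is the triple of counts of positive, negative, and zero eigenvalues (here the last count is zero, since $h$ is non-degenerate).
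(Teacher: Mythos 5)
Your proposal is correct. The paper itself gives no proof of this statement --- it is quoted with a citation to Lee --- so there is nothing internal to compare against; your argument is the standard one found in the cited source: an inductive $h$-orthogonal decomposition $V=\langle e_1\rangle\oplus e_1^{\perp_h}$ to diagonalise by congruence, and the characterisation of $p$ as the maximal dimension of a subspace on which $h$ is positive definite, with the dimension count $\dim W+\dim V_->n$ forcing a contradiction. You also handle the one genuinely delicate point correctly: since the matrix of $h$ in any basis is real symmetric, the spectral theorem gives an orthogonal $O$ with $O^{-1}AO=O^{\top}AO$ diagonal, so this change of basis is simultaneously a similarity and a congruence, which is exactly what ties the eigenvalue sign counts of $A$ to the basis-independent inertia of $h$ rather than to the (basis-dependent) eigenvalues themselves.
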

Sylvester's law of inertia allows us to define the signature of the metric $g$ in each tangent space of $M$. We will say that the signature of $g$ at $x\in M$ is the pair $(r,s)$, where $r$ is the number of positive eigenvalues of $g_x$ and $s$ is the number of negative eigenvalues.

This last result allows us to define the signature of a metric $g$ on $M$, which will simply be the signature of $g_x$ for any $x\in M$. It can be seen that the signature of the metric is locally constant.

\begin{Lemma}
	The signature of a metric $g$ on $M$ is locally constant.
\end{Lemma}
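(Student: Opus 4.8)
The plan is to show that the signature function $x \mapsto \operatorname{sig}(g_x)$ is locally constant by exhibiting, around any point, a neighbourhood on which the numbers of positive and negative eigenvalues do not change. First I would fix $x_0 \in M$ and choose a local frame, for instance a coordinate frame $(\partial_1,\ldots,\partial_m)$ on some chart $U$ around $x_0$; this gives at each $x \in U$ a symmetric matrix $G(x) = \big(g_x(\partial_i,\partial_j)\big)_{i,j}$ whose entries depend smoothly on $x$ by the smoothness axiom in the definition of a pseudo-Riemannian metric. By Sylvester's law of inertia, the signature at $x$ is the pair $(r(x),s(x))$ of numbers of positive and negative eigenvalues of $G(x)$, and $r(x)+s(x) = m$ for all $x$ since $G(x)$ is non-degenerate.

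The key step is a continuity/counting argument on the eigenvalues of $G(x)$. The eigenvalues of a real symmetric matrix vary continuously with its entries (for example, because they are the roots of the characteristic polynomial $\det(G(x) - \lambda I)$, whose coefficients are smooth functions of $x$, and the roots of a monic polynomial depend continuously on its coefficients; alternatively one can invoke the Courant–Fischer min-max characterisation). Since $G(x_0)$ is non-degenerate, $0$ is not an eigenvalue of $G(x_0)$; let $\delta > 0$ be smaller than the absolute value of every eigenvalue of $G(x_0)$. By continuity there is a neighbourhood $V \subseteq U$ of $x_0$ on which each of the $m$ continuously-varying eigenvalue branches stays within $\delta/2$ of its value at $x_0$, hence never crosses $0$. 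Therefore on $V$ each eigenvalue keeps its sign, so $r(x) = r(x_0)$ and $s(x) = s(x_0)$ for all $x \in V$; that is, the signature is constant on $V$.

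The main obstacle, such as it is, is purely expository: one must be careful about what "the eigenvalues vary continuously" means, since eigenvalue branches of a symmetric matrix can cross and are not individually smooth in general. The clean way around this is to note that we only need the \emph{count} of eigenvalues in $(0,\infty)$ and in $(-\infty,0)$, and this count is stable because no eigenvalue can reach $0$ near $x_0$ — so it suffices to know that the \emph{set} of eigenvalues (with multiplicity) varies continuously in the Hausdorff sense, which follows from continuity of the characteristic polynomial's coefficients. One can phrase this via the intermediate value theorem applied to $\det G$: since $\det G(x_0) \neq 0$ and $\det G$ is continuous, $\det G$ stays nonzero on a neighbourhood, preventing any eigenvalue from changing sign — combined with continuity of the trace-like symmetric functions, or more simply with the observation that the rank of $G(x)$ is constant ($=m$) near $x_0$, this pins down the signature. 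Finally, since $M$ is connected and the signature is a locally constant (hence continuous) map into a discrete set, it is globally constant, which justifies the remark preceding the lemma that the signature of $g$ on $M$ is well defined.
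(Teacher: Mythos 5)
Your proposal is correct and follows essentially the same route as the paper: represent $g$ in a coordinate chart by a smooth symmetric matrix $G(x)$, note that non-degeneracy excludes zero eigenvalues, and use the continuous dependence of the roots of the (monic) characteristic polynomial on its coefficients to conclude that the counts of positive and negative eigenvalues cannot change near $x_0$. Your extra care about what continuity of eigenvalues means, and the closing remark on global constancy via connectedness, merely flesh out steps the paper leaves implicit.
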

\begin{proof}
	Let $x\in X$ and $(U,\varphi)$ be a chart of $M$ centred at $x$. Let $\partial_{x_i}$ denote the coordinate vectors for $i = 1,\ldots, m$ defined by $\varphi$. Then, $\big(\partial_{x_1}(y), \ldots, \partial_{x_m}(y)\big)$ provides a basis of $T_yM$ for all $y\in U$, and the matrix representation of $g$ in this basis is
	\[
	G(y)= \big(g_{ij}(y)\big) := \big(g(\partial_{x_i}, \partial_{x_j})\big).
	\]
	
	Thus, the fact that the signature of $g$ is locally constant around $x$ is equivalent to the number of positive eigenvalues of $G(y)$ being constant around $x$. Since $G(y)$ is non-degenerate, it has no null eigenvalues. The eigenvalues of $G(y)$ are the roots of a monic polynomial of degree $m$ whose coefficients $c_i\in\R$ are products and sums of the entries of $G$. Since the functions $g_{ij}$ are smooth, the entries of $G(y)$ depend smoothly on $y$ and so do the coefficients $c_i$. It is known that the roots of a monic polynomial of positive degree depend continuously on its coefficients \cite{roots}, and hence the statement follows.
\end{proof}

We can now give a precise definition of a Lorentzian manifold.
\begin{Definition}
	Let $(M,g)$ be a connected pseudo-Riemannian manifold, where $M$ is of dimension $m$. We say that $(M,g)$ is a \textbf{Lorentzian manifold} if the signature of $g$ is $(m-1,1)$. 
\end{Definition}

We will say that the vectors $u_1,\ldots,u_{m-1},v\in T_xM$ form an orthonormal basis of $T_xM$ if $(u_1,\ldots,u_{m-1},v)$ is a basis of $T_xM$ and, in addition
\begin{enumerate}
	\item $ g(u_i,u_i) = 1$ and $g(u_i,v) = 0$ for all $i = 1,\ldots, m-1$,
	\item $g(u_i, u_j) = 0$ whenever $i\neq j$,
	\item $g(v,v) = -1$.
\end{enumerate}

 From now on until the end of the section, let $(M,g)$ be a Lorentzian manifold.
\begin{Definition}
	Let $x\in M$ and $u\in T_xM$. We say that $u$ is 
	\begin{enumerate}
		\item \textbf{space-like} if $g(u,u)>0$ or $u = 0$,
		\item \textbf{light-like} or \textbf{null} if $g(u,u) = 0$ and $u\neq 0$,
		\item \textbf{time-like} if $g(u,u)<0$,
		\item \textbf{non-space-like} if $g(u,u)\leq 0$ and $u\neq 0$.
	\end{enumerate}

If $c:I\to M$ is a smooth curve, we say that $c$ is space-like, light-like, time-like or non-space-like if $\dot{c}(t)$ is respectively space-like, light-like, time-like or non-space-like for all $t\in I$.
\end{Definition}

Our next goal is to define the concept of geodesic on $(M,g)$ and provide tools to compute them. We need to present the Levi-Civita connection first. 
\begin{Definition}
	An \textbf{affine connection} on a manifold $M$ is a map 
	\[
	\begin{array}{cccc}
	\nabla:& \mathfrak{X}(M)\times\mathfrak{X}(M)&\to&\mathfrak{X}(M)\\
	&(X,Y)&\mapsto&\nabla_XY
	\end{array}
	\]
	such that
	\begin{enumerate}
		\item $\nabla_{fX+Y}Z = f\nabla_XZ+\nabla_YZ$,\\
		\item $\nabla_X(fY+Z) = X(f)Y+f\nabla_XY+\nabla_XZ,$
	\end{enumerate}
for all $X,Y,Z\in\mathfrak{X}(M)$ and $f\in\mathcal{C}^\infty(M)$.
We say that the connection $\nabla$ is \textbf{symmetric} if $\nabla_XY -\nabla_YX = [X,Y]$ for all $X,Y\in \mathfrak{X}(M)$. In addition, if $M$ is equipped with a pseudo-Riemannian metric $g$, we say that $\nabla$ and $g$ are \textbf{compatible} if 
\[
Xg(Y,Z) = g(\nabla_XY,Z)+g(Y,\nabla_XZ).
\]
\end{Definition}

\begin{Theorem}[Fundamental Theorem of Riemannian geometry]
	\cite[Thm. 3.11]{oneill} There exists a unique affine connection $\nabla$ on $(M,g)$ which is symmetric and compatible with $g$. We call $\nabla$ the Levi-Civita connection of $(M,g)$.
\end{Theorem}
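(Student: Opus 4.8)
The plan is to prove uniqueness and existence separately, both via the Koszul formula, observing that the argument uses only non-degeneracy of $g$ and not its signature. For uniqueness, suppose $\nabla$ is a symmetric connection compatible with $g$. Writing the compatibility identity $Xg(Y,Z) = g(\nabla_X Y,Z) + g(Y,\nabla_X Z)$ three times with $X,Y,Z$ cyclically permuted, then adding the first two and subtracting the third and repeatedly substituting $\nabla_A B - \nabla_B A = [A,B]$, one obtains the Koszul formula
\[
2g(\nabla_X Y, Z) = Xg(Y,Z) + Yg(Z,X) - Zg(X,Y) + g([X,Y],Z) - g([X,Z],Y) - g([Y,Z],X).
\]
Its right-hand side makes no reference to $\nabla$, so by axiom (iii) of a pseudo-Riemannian metric (non-degeneracy) it determines $\nabla_X Y$ uniquely; hence at most one such connection exists.

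For existence, I would read the Koszul formula as a definition. Fixing $X,Y\in\mathfrak{X}(M)$, let $\omega_{X,Y}\colon\mathfrak{X}(M)\to\mathcal{C}^\infty(M)$ send $Z$ to one half of the right-hand side above. The crucial step is to check that $\omega_{X,Y}$ is $\mathcal{C}^\infty(M)$-linear in $Z$: additivity is clear, and for homogeneity one expands $[X,fZ] = f[X,Z] + (Xf)Z$ and $[Y,fZ] = f[Y,Z] + (Yf)Z$ and checks that the derivative terms $(Xf)g(Y,Z)$ and $(Yf)g(Z,X)$ produced by the first two summands cancel against the corresponding terms from the bracket summands, using symmetry of $g$. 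By non-degeneracy of $g$ — equivalently, the musical isomorphism $TM\cong T^*M$ — there is then a unique vector field, which we name $\nabla_X Y$, with $g(\nabla_X Y,-) = \omega_{X,Y}$.

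It then remains to verify that the resulting $\nabla$ satisfies the two connection axioms, is symmetric, and is compatible with $g$. Each of these is a direct manipulation of the defining formula using the Leibniz rule for vector fields on functions and the identities $[fX,Y] = f[X,Y] - (Yf)X$ and $[X,fY] = f[X,Y] + (Xf)Y$: the tensorial parts of axioms (i) and (ii) come from cancellations of derivative terms just as in the tensoriality check for $\omega_{X,Y}$, the Leibniz term $X(f)Y$ in axiom (ii) is exactly what survives when $Y$ is rescaled by $f$, and symmetry and compatibility follow respectively by antisymmetrizing the Koszul expression in $X,Y$ and symmetrizing it in $Y,Z$. I expect the only genuinely delicate points to be the tensoriality of $\omega_{X,Y}$ in $Z$ and the bookkeeping of the non-tensorial derivative terms in the axiom checks; everything else is routine.
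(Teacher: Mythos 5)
The paper states this theorem without proof, citing O'Neill [Thm.~3.11], and your argument is precisely the standard Koszul-formula proof found in that reference, so there is nothing to compare beyond noting the agreement. Your outline is correct: the cyclic manipulation of the compatibility identity gives uniqueness via non-degeneracy, the derivative terms in the tensoriality check for $\omega_{X,Y}$ in $Z$ do cancel by symmetry of $g$, the axiom, symmetry and compatibility verifications go through as you describe, and you rightly observe that only non-degeneracy (not positive-definiteness) of $g$ is used, which is exactly why the theorem applies in the pseudo-Riemannian/Lorentzian setting needed in this paper.
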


If $(U,\varphi)$ is a local chart of $M$, let us denote by $\partial_{x_i}(x)$ the coordinate vector fields induced by the chart. Then, it is known that $\{\partial_{x_i}(x)\}_{i = 1}^m$ is a basis of $T_xM$ for all $x\in U$. Hence, there exist functions $\Gamma_{ij}^k:U\to\mathbb{R}$ such that
\[
\nabla_{\partial_{x_i}}\partial_{x_j} = \sum\limits_{k = 1}^m\Gamma^k_{ij}\partial_{x_k}
\]
in $U$ for $i,j,k = 1,\ldots,m$. The smooth functions $\Gamma^k_{ij}$ are known as the Christoffel symbols of the connection. Recall that, in a local chart, one can also define the smooth functions $g_{ij}:U\to\R$ given by $g_{ij}(x) = g_x(\partial_{x_i}(x), \partial_{x_j}(x))$. It can be shown \cite[Prop. 3.13.2]{oneill} that the Christoffel symbols of the Levi-Civita connection are given, in a local chart of $M$, by
\begin{equation}
	\label{Christoffel}
\Gamma_{ij}^k = \frac{1}{2}\sum\limits_{n= 1}^mg^{kn}\Big(\frac{\partial g_{jn}}{\partial x_i}+\frac{\partial g_{in}}{\partial x_j}-\frac{\partial g_{ij}}{\partial x_n}\Big)
\end{equation}
where $(g^{ij})$ is the inverse matrix of $(g_{ij})$, and these completely determine $\nabla$.

The Levi-Civita connection allows us to define the covariant derivative of a vector field on a curve as follows. Let $\gamma:I\to M$ be a smooth curve on $M$. A vector field on $\gamma$ is a map $V:I\to TM$ such that $V(t)\in T_{\gamma(t)}M$ for all $t\in I$. It can be shown \cite[Rk. 2.2.2.3]{docarmo} that the value of $\nabla_XY(x)$ depends solely on $X(x)$ and the values of $Y$ on a curve tangent to $X(x)$ at $x$, for all $x\in M$ and $X,Y\in\mathfrak{X}(M)$. Hence, one can define the covariant derivative of $V$ as the unique vector field on $\gamma$ which is given by
\begin{equation}
	\label{CovariantDerivative}
\frac{D}{dt}V := \nabla_{\dot{\gamma}}V.
\end{equation}

In a local chart $(U,\varphi)$, the covariant derivative operator reads \cite[p. 66]{oneill}
\begin{equation}
	\label{CovariantDerivativeChart}
	\frac{D}{dt}V = \sum\limits_{i = 1}^m\frac{V_i}{dt}\partial_{x_i}+\sum\limits_{i,j,k = 1}^m V_j\dot{x}_i\Gamma_{ij}^k\partial_{x_k},
\end{equation}
if $V = \sum\limits_{i = 1}^mV_i\partial_{x_i}$ and $\varphi\circ\gamma(t) = \big(x_1(t),\ldots, x_m(t)\big)$. The notion of covariant derivative allows us to define the geodesics on $(M,g)$.
\begin{Definition}
	A parametrized curve $\gamma: I\to M$ is a \textbf{geodesic} of $(M,g)$ if 
	\[
	\frac{D}{dt}\dot{\gamma} = \nabla_{\dot{\gamma}}\dot{\gamma}\equiv 0.
	\]
\end{Definition}
Equation \eqref{CovariantDerivativeChart} allows us to express the geodesic condition in a chart $(U,\varphi)$. A curve $\gamma: I\to U$ given by $\varphi\circ\gamma(t) = \big(x_1(t), \ldots, x_m(t)\big)$ is a geodesic if and only if
\begin{equation}
	\label{GeodesicChart}
	\ddot{x}_k + \sum\limits_{i,j = 1}^m\dot{x}_i\dot{x}_j\Gamma^k_{ij} \equiv 0,
\end{equation}
for all $k = 1,\ldots, m$. It can be shown \cite[p. 61]{docarmo} that all geodesics $\gamma: I \to M$ have constant length, i.e.
\begin{equation}
	\label{constant}
\frac{d}{dt}g(\dot{\gamma}, \dot{\gamma})\equiv 0.
\end{equation}

We finish this section with the following result on existence and uniqueness of geodesics.

\begin{Proposition}
	\label{EandUgeodesics}
	\cite[Prop. 3.22]{oneill} Let $x\in M$ and $u\in T_xM$. There exists an interval $I$ about 0 and a unique geodesic $\gamma: I \rightarrow M$ of $M$ such that $\gamma(0) = x$ and $\dot{\gamma}(0) = u$.
\end{Proposition}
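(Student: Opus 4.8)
The plan is to reduce the statement to the classical Picard--Lindelöf (Cauchy--Lipschitz) existence-and-uniqueness theorem for ordinary differential equations. First I would choose a chart $(U,\varphi)$ of $M$ centred at $x$, so that by \eqref{GeodesicChart} a curve $\gamma\colon I\to U$ with $\varphi\circ\gamma(t)=(x_1(t),\dots,x_m(t))$ is a geodesic if and only if
\[
\ddot{x}_k+\sum_{i,j=1}^m\dot{x}_i\dot{x}_j\,\Gamma^k_{ij}(x_1,\dots,x_m)=0,\qquad k=1,\dots,m.
\]
This is a second-order ODE system on the open set $\varphi(U)\subset\R^m$; introducing the auxiliary functions $v_k:=\dot{x}_k$ converts it into the first-order system $\dot{x}_k=v_k$, $\dot{v}_k=-\sum_{i,j}\Gamma^k_{ij}(x)\,v_iv_j$ on $\varphi(U)\times\R^m\subset\R^{2m}$, whose solutions with $v=\dot{x}$ correspond exactly to geodesics in $U$.

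Next I would observe that the vector field defining this system is smooth: by formula \eqref{Christoffel} the Christoffel symbols $\Gamma^k_{ij}$ are built algebraically from the smooth functions $g_{ij}$ and the entries of the inverse matrix $(g^{ij})$ (smooth because $(g_{ij})$ is everywhere invertible), while the terms $v_iv_j$ are polynomial. In particular the right-hand side is locally Lipschitz, so Picard--Lindelöf applies: for the initial value $\big(x(0),v(0)\big)=\big(\varphi(x),\,d\varphi_x(u)\big)$ there exist an interval $I$ about $0$ and a unique solution $t\mapsto\big(x_1(t),\dots,x_m(t),v_1(t),\dots,v_m(t)\big)$ on $I$. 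Setting $\gamma:=\varphi^{-1}\circ(x_1,\dots,x_m)$ yields a geodesic on $I$ with $\gamma(0)=x$ and $\dot{\gamma}(0)=u$, which establishes existence.

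For uniqueness, suppose $\gamma_1,\gamma_2\colon I\to M$ are geodesics with $\gamma_i(0)=x$ and $\dot{\gamma}_i(0)=u$. By continuity of each $\gamma_i$ and openness of $U$, there is a (possibly smaller) interval about $0$ on which both $\gamma_i$ take values in $U$; there, $\big(\varphi\circ\gamma_1,\tfrac{d}{dt}(\varphi\circ\gamma_1)\big)$ and $\big(\varphi\circ\gamma_2,\tfrac{d}{dt}(\varphi\circ\gamma_2)\big)$ both solve the first-order system above with the same initial data, so the uniqueness clause of Picard--Lindelöf forces them to coincide, hence $\gamma_1=\gamma_2$ near $0$. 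Since the assertion is purely local, no patching between charts is required. I expect no serious obstacle here: the one point that deserves a word of care is precisely this transfer of uniqueness, namely that a geodesic cannot instantaneously leave the chart, which follows at once from continuity; the rest is the routine verification that \eqref{GeodesicChart} is an ODE with smooth coefficients.
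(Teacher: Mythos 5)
Your proposal is correct and coincides with the standard argument: the paper itself gives no proof (it cites O'Neill, Prop.~3.22), and O'Neill's proof is exactly this reduction of \eqref{GeodesicChart} to a first-order system with smooth (hence locally Lipschitz) right-hand side and an appeal to the existence--uniqueness theorem for ODEs. The only point worth a half-sentence more is that your uniqueness argument gives agreement of $\gamma_1$ and $\gamma_2$ only near $0$; to get uniqueness on all of a common interval $I$ one either shrinks $I$ so the geodesic stays in the chart or runs the usual open--closed connectedness argument, both of which are routine.
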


\section{The Space of Null Geodesics of a Spacetime}
\label{SpaceOfNullG}
In this section we introduce the concept of spacetime and that of its space of null geodesics. We also present the canonical contact structure of such space.

Consider a Lorentzian manifold $(M, g)$.  We say that $(M,g)$ is \textbf{time-orientable} if there exists a time-like vector field $X\in\mathfrak{X}(M)$. That is, $g_x(X(x), X(x))<0$ for all $x\in M$. A choice of such a vector field is called a choice of future within $M$.

\begin{Proposition}
	\label{Propcones}
	Let $(M, g)$ be a Lorentzian manifold. For all $x\in M$, the set of non-space-like vectors in $T_xM$ forms a solid cone consisting in two solid hemicones, the boundary of which is precisely the set of all null vectors in $T_xM$. In addition, the cone varies smoothly with respect to the basepoint $x$.
\end{Proposition}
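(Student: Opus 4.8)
The plan is to prove the statement fibrewise first, by putting $g_x$ in standard form, and then to globalize it by producing a smooth local orthonormal frame.

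For the pointwise part I would fix $x\in M$ and apply Sylvester's law of inertia to obtain an orthonormal basis $(u_1,\dots,u_{m-1},v)$ of $T_xM$, i.e. one in which $g_x$ is represented by $\operatorname{diag}(1,\dots,1,-1)$. Writing $w=\sum_i a_iu_i+bv$ gives $g_x(w,w)=\sum_i a_i^2-b^2$, so the set of non-space-like vectors is $\mathcal C_x=\{\sum_i a_i^2\le b^2\}\setminus\{0\}$. I would observe that $b=0$ on $\mathcal C_x$ forces $w=0$, so $b\neq 0$ everywhere on $\mathcal C_x$ and it splits as $\mathcal C_x=\mathcal C_x^+\sqcup\mathcal C_x^-$ according to the sign of $b$. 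Each $\mathcal C_x^{\pm}$ is the intersection of the convex Lorentz cone $\{\sum_i a_i^2\le b^2\}$ with an open half-space, hence convex and in particular connected, it does not contain the origin, and it is invariant under multiplication by positive scalars since $g_x(\lambda w,\lambda w)=\lambda^2 g_x(w,w)$; thus it is a solid cone. The time-like vectors form the interior $\{\sum_i a_i^2<b^2\}$, whose topological frontier in $T_xM$ is $\{\sum_i a_i^2=b^2\}$, that is, the null vectors together with the origin, which yields the boundary claim.

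For the smooth dependence I would work locally around a point $x_0\in M$. Choose $X\in\mathfrak X(M)$ with $g_{x_0}(X(x_0),X(x_0))<0$, which exists because $g_{x_0}$ has a negative direction; since $y\mapsto g_y(X(y),X(y))$ is smooth and negative at $x_0$, it remains negative on a neighbourhood $U$ of $x_0$, so $\tilde X:=X/\sqrt{-g(X,X)}$ is a smooth unit time-like vector field on $U$. Then $T_yM=\R\,\tilde X(y)\oplus\tilde X(y)^{\perp}$ is a $g_y$-orthogonal splitting into smooth subbundles, and by Sylvester's law together with additivity of signature over orthogonal direct sums — the summand $\R\,\tilde X(y)$ carrying signature $(0,1)$ — the restriction $g_y|_{\tilde X(y)^{\perp}}$ has signature $(m-1,0)$, i.e. is positive definite. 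Running the Riemannian Gram--Schmidt process on any smooth local frame of the rank-$(m-1)$ bundle $\tilde X^{\perp}$, which is unobstructed precisely because this restriction is positive definite, produces smooth sections $e_1,\dots,e_{m-1}$ completing $\tilde X$ to a smooth orthonormal frame over $U$.

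Finally, this frame trivializes $TU\cong U\times\R^m$ and transports the field of non-space-like cones to the constant field $U\times\mathcal C$, where $\mathcal C=\{\sum_i a_i^2\le b^2\}\setminus\{0\}$ is the fixed double cone from the first step — this is the precise meaning of the cone varying smoothly with the basepoint — while the two hemicones are locally separated by the sign of the smooth function $w\mapsto g(w,\tilde X(\pi(w)))$, even though, absent a time-orientation, no such function need exist globally on $M$. I expect the only genuine obstacle to be the construction of this smooth orthonormal frame: the key move is to peel off the time-like line $\R\,\tilde X$ first, so that its $g$-orthogonal complement is an honest Riemannian bundle on which Gram--Schmidt goes through; once that is done, the entire cone structure is just the fibrewise linear algebra of the first paragraph transported along the frame.
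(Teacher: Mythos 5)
Your proof is correct, and its pointwise core is the same as the paper's: choose an orthonormal basis at $x$ via Sylvester's law, compute $g_x(w,w)=\sum_i a_i^2-b^2$, and read off the solid double cone with null boundary. Where you genuinely go beyond the paper is in the smoothness claim: the paper dispatches it in one sentence (``clear from the fact that $g_x$ varies smoothly with respect to $x$''), whereas you actually prove it by producing a smooth local orthonormal frame --- a local unit time-like field $\tilde X$, the observation that $g$ restricted to the smooth subbundle $\tilde X^{\perp}$ is positive definite by additivity of the signature over the orthogonal splitting, and Riemannian Gram--Schmidt --- and then interpreting smooth dependence as the statement that the frame trivializes the cone field to a constant model cone over $U$. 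This buys two things: a precise meaning for ``the cone varies smoothly'' (the paper never says what it means), and, as a by-product, essentially the fact that $C$ is a smooth subbundle of $TM$, which the paper invokes later when giving $C$ its manifold structure. Your remarks that each hemicone is convex (intersection of the Lorentz cone with a half-space, separated by the sign of $b$, equivalently of $g(w,\tilde X)$ locally) and that the topological frontier contains the origin in addition to the null vectors are refinements the paper silently skips; neither affects correctness, and your honesty about the origin is a small improvement on the statement as written.
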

\begin{proof}
	Let $x\in M$ and let $(u_1,\ldots u_{m-1}, v)$ be an orthonormal basis of $T_xM$. Let $\mathcal{I}C_x$ denote the set of non-space-like vectors in $T_xM$.
	
	Let now $\lambda_1,\ldots,\lambda_{m-1},\mu\in \R$ not all zero. Then,
	\begin{align*}
		\sum\limits_{i = 1}^{m-1}\lambda_iu_i +\mu v\in \mathcal{I}C_x&\iff  g_x\big(\sum\limits_{i = 1}^{m-1}\lambda_iu_i +\mu v, \sum\limits_{i = 1}^{m-1}\lambda_iu_i +\mu v\big)\leq 0 \iff \\ &\iff \sum\limits_{i = 1}^{m-1}\lambda_i^2-\mu^2\leq 0,
	\end{align*}
	and the claim follows. It is clear that the boundary $C_x$ of the cone is precisely formed by the null vectors in $T_xM$. The differentiable fashion in which the cone depends on $x$ is clear from the fact that $g_x$ varies smoothly with respect to $x$.  
\end{proof}

\begin{Definition}
	A \textbf{spacetime} is a time-orientable connected Lorentzian manifold of dimension $m\geq 3$. 
\end{Definition}

From now on, let $(M,g)$ be a spacetime. We will denote by $C$ the bundle of null vectors, which is pointwise the boundary of the bundle of solid cones of non-space-like vectors $\mathcal{I}C$.

Proposition \ref{Propcones} ensures that $C$ inherits a structure of manifold from $TM$. Now, a choice of future is simply a differentiable choice of one of such hemicones on every tangent space. We will denote by $C^+$ the bundle of null future vectors of $M$, and by $\mathcal{I}C^+$ the bundle of future-pointing non-space-like vectors. A curve $\gamma:I\rightarrow M$ is said to be future-pointing if $\dot{\gamma}(t)\in\mathcal{I}C^+_{\gamma(t)}$ for all $t\in I$. We can now define the set of null geodesics of $M$.

\begin{Definition}
	The set of unparametrized future-pointing null geodesics of a spacetime $(M,g)$, or simply of \textbf{the set of null geodesics}, is
	\[
	\mathcal{N} = \{\gamma(I)\ |\ \gamma:I\rightarrow M\text{ is a maximal future-pointing null geodesic in } (M,g)\}.
	\]
\end{Definition}

Our next goal is to show that $\mathcal{N}$ can be defined as a quotient of the bundle $C^+$ and that it can be given structure of a manifold under mild assumptions on $M$. Let us first define the geodesic spray $X_g\in\mathfrak{X}(TM)$.

\begin{Definition}
	The \textbf{geodesic spray} $X_g\in\mathfrak{X}(TM)$ is the vector field on the tangent bundle of $M$ whose integral lines are the curves $\dot{\gamma}(t)\in T_{\gamma(t)}M$, where $\gamma: I\rightarrow M$ is a geodesic of $M$. 
\end{Definition}

It can be seen that the geodesic spray is tangent to the bundle $C$. Indeed, let us define $f:TM\rightarrow \R$ given by $f(u) = g(u,u)$. Now, for any geodesic $\gamma: I\rightarrow M$, one has that $\dot{\gamma}$ is an integral line of $X_g$ and $f(\dot{\gamma}) = g(\dot{\gamma}, \dot{\gamma})$ is constant, by Equation \eqref{constant}. For any $u\in T_xM$, the integral line of $X_g$ going through $u$ at time zero is $\dot{\gamma}(s)$, where $\gamma(s)$ is a geodesic in $M$ such that $\gamma(0) = x$ and $\dot{\gamma}(0) = u$. Then,
\[
T_u f(X_g(u)) = \frac{d}{ds}\big{|}_{s = 0}f(\dot{\gamma}(s)) = 0,
\]  
and $X_g$ is tangent to any level set of $f$ and, in particular, to $C = f^{-1}(0)$. Moreover, if $\dot{\gamma}(s)$ is the integral line of $X_g$ through $u\in C^+_x$ at time zero, then $\gamma(0) = x$ and $\dot{\gamma}(0) = u$. In addition, $\dot{\gamma}(s)\in C^+$ for all $s\in(-\varepsilon, \varepsilon)$ for some $\varepsilon>0$. Hence, the geodesic spray $X_g$ is tangent to $C^+$.

Let us also define the Euler vector field $\Delta\in\mathfrak{X}(TM)$ as
\[
\Delta(u) = T_0c(\partial_s),
\]
where $u\in T_xM$ and $c:\R\rightarrow T_xM$ is given by $c(s) = e^{s}u$. Note that the Euler field simply dilates vectors in $T_xM$. Note that 
\[
\dot{c}(s) = T_s c(\partial_s) = \Delta(c(s)),
\]
and hence $c$ is an integral line of $\Delta$. If $u\in C^+$, then $c$ entirely lies in $C^+$, which shows that the Euler field $\Delta$ is tangent to $C^+$. 

We have showed that both fields $X_g, \Delta\in\mathfrak{X}(TM)$ are vector fields on $C^+$. Let us give some intuition on why these vector fields are useful for our argument. Firstly, quotienting $C^+$ by $\Delta$ is going to projectivise the cones, which is relevant because in $\mathcal{N}$ we only care of unparametrized geodesics. Also, intuitively, quotienting by the geodesic spray $X_g$ will identify those vectors that define the same geodesic, that is, those vectors in different cones for which there exists a geodesic going through both of them. This intuitive idea is formalised by the fact that the integral lines of the geodesic spray are precisely the geodesics in $M$ together with their tangent vectors. In fact, by quotienting $C^+$ by $\Delta$ and $X_g$ one obtains the manifold of future-pointing unparametrized null geodesics $\mathcal{N}$. The only thing left to show is that the distribution
\[
\mathcal{D} = \lspan \Delta, X_g\rspan
\]
is integrable on $C^+$. A straightforward  computation in coordinates \cite[p. 15]{bautista} shows that
\[
[X_g,\Delta] = X_g\in\mathcal{D},
\]
and hence, by Frobenius' Theorem \ref{Frobenius}, the distribution $\mathcal{D}$ is integrable and
\[
\mathcal{N} = C^+/\mathcal{D}.
\]

The previous description of the space of null geodesics allows us to formulate the following result, whose proof can be found in \cite[Sec. 2.2]{bautista}.
\begin{Theorem}
	\label{CharactManifolds}
	Let $(M,g)$ be a spacetime such that
	\begin{enumerate}
		\item for all $x\in M$ and every neighbourhood $U\subset M$ of $x$, there exists a neighbourhood $V\subset U$ of $x$ such that any segment of non-space-like curve with endpoints in $V$ is wholly contained in $U$,
		\item for any compact $K\subset M$, there exists a compact $K'\subset M$ such that any null geodesic segment with endpoints in $K$ is totally contained in $K'$.
	\end{enumerate}
Then, $\mathcal{N}$ inherits the structure of a smooth manifold from that of $C^+$. In addition, such structure is the only one for which the canonical projection
\[
\pi_\mathcal{N}:C^+\rightarrow\mathcal{N} = C^+/\mathcal{D}
\]
is a submersion.
\end{Theorem}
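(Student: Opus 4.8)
The plan is to recognise $\mathcal{N} = C^+/\mathcal{D}$ as the leaf space of the foliation $\mathcal{F}$ induced on $C^+$ by the rank-two integrable distribution $\mathcal{D} = \lspan \Delta, X_g\rspan$ (integrability having already been obtained from $[X_g,\Delta] = X_g \in \mathcal{D}$ together with Frobenius' Theorem \ref{Frobenius}), and then to appeal to the standard criterion for a foliation to be simple (Godement's criterion): the leaf space of a foliation $\mathcal{F}$ on a manifold $N$ carries a smooth structure for which the leaf projection $N \to N/\mathcal{F}$ is a submersion if and only if the graph $R_\mathcal{F} = \{(p,q)\in N\times N : q \text{ lies on the leaf through } p\}$ is a closed embedded submanifold of $N\times N$ on which the first projection $R_\mathcal{F}\to N$ is a submersion; moreover, when such a structure exists it is unique. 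The proof then splits into: (a) identifying the leaves of $\mathcal{F}$ with the elements of $\mathcal{N}$; (b) verifying the two hypotheses of Godement's criterion from conditions $i)$ and $ii)$; and (c) deducing the uniqueness clause.

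For (a), which is essentially bookkeeping: the leaf of $\mathcal{F}$ through $u \in C^+_x$ is swept out by the flows of $\Delta$ and $X_g$; since the flow of $X_g$ through $u = \dot\gamma_u(0)$ is $t\mapsto \dot\gamma_u(t)$, where $\gamma_u$ is the maximal null geodesic with initial velocity $u$ (Proposition \ref{EandUgeodesics}), and the flow of $\Delta$ is dilation, this leaf is exactly $\{\lambda\,\dot\gamma_u(t) : \lambda>0,\ t\in I_u\}$. Hence two points of $C^+$ lie on a common leaf precisely when they are positive rescalings of tangent vectors to one and the same maximal future-pointing null geodesic, so set-theoretically the leaf space is $\mathcal{N}$ and the leaf projection is $\pi_\mathcal{N}$.

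For (b), the two hypotheses play complementary roles, and it is convenient first to quotient only by the free and proper $\R_{>0}$-action generated by $\Delta$, obtaining the projectivised null bundle $\mathbb{P}C^+ = C^+/\R_{>0}$, a smooth $\mathbb{S}^{m-2}$-bundle over $M$; since $\Delta$ rescales $X_g$ (indeed $\mathcal{L}_\Delta X_g = -X_g$), the line field $\lspan X_g\rspan$ descends to $\mathbb{P}C^+$, and $\mathcal{N}$ is the leaf space of the resulting one-dimensional foliation. Condition $i)$, which is precisely strong causality of $(M,g)$, is the local ingredient: it forces this foliation to be locally simple, for a null geodesic that re-approached its starting point after leaving a neighbourhood $U$ would produce a non-spacelike segment with endpoints arbitrarily close yet leaving $U$, contradicting $i)$; this yields that near the diagonal $R_\mathcal{F}$ is an embedded submanifold on which the first projection is a submersion, and that the local quotients are Hausdorff. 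Condition $ii)$, null pseudoconvexity, is the global ingredient ensuring that $R_\mathcal{F}$ is closed: if $(p_n,q_n)\to(p,q)$ with each $q_n$ on the null geodesic through $p_n$, then the connecting null geodesic segments all lie in a fixed compact set by $ii)$, so by continuous dependence of geodesics on their initial data (Proposition \ref{EandUgeodesics}) a subsequence converges to a null geodesic through $p$ passing through $q$, giving $(p,q)\in R_\mathcal{F}$. The two requirements of Godement's criterion are thereby met, so $\mathcal{N}$ is a smooth manifold and $\pi_\mathcal{N}$ a submersion. Finally, for (c), uniqueness is formal: if $Q$ is any smooth structure on the set $\mathcal{N}$ making $\pi_\mathcal{N}:C^+\to Q$ a submersion, then $\pi_\mathcal{N}$ is open and admits local smooth sections, so comparing with the Godement quotient $\mathcal{N}_0$ with projection $\pi_0$, the set-theoretic identity $\mathcal{N}_0\to Q$ is locally $\pi_\mathcal{N}$ post-composed with a local section of $\pi_0$, hence smooth, and symmetrically so is its inverse; the two structures agree.

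I expect the main obstacle to be step (b): converting the causal hypotheses $i)$ and $ii)$ into the clean differential-topological statements that $R_\mathcal{F}$ is embedded near the diagonal and that $R_\mathcal{F}$ is closed. The closedness argument is the most delicate point, since one must promote convergence of pairs of endpoints to convergence of the entire connecting null geodesics — which is exactly what null pseudoconvexity $ii)$ is designed to provide, used together with compactness and the smooth dependence of geodesics on initial conditions.
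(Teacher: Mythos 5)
A preliminary remark: the paper itself does not prove Theorem \ref{CharactManifolds}; it quotes it from \cite[Sec. 2.2]{bautista}, where the argument (going back to Low) builds charts for $\mathcal{N}$ directly: condition $i)$ (strong causality) yields causally convex, globally hyperbolic neighbourhoods, the null geodesics meeting such a neighbourhood are smoothly parametrized by data on a local spacelike hypersurface (essentially a local sphere/cosphere bundle), these parametrizations give the charts and show $\pi_\mathcal{N}$ is a submersion, and condition $ii)$ (null pseudoconvexity) is then used to prove Hausdorffness. Your route through the leaf relation and Godement's criterion is therefore a genuinely different packaging, although the division of labour you assign to $i)$ and $ii)$ (local regularity versus Hausdorffness) matches the source; your step (a) and the uniqueness argument (c) are fine as stated.

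The gaps are both in step (b). First, Godement's criterion needs the graph $R_{\mathcal{F}}\subset C^+\times C^+$ to be an embedded submanifold with submersive first projection \emph{globally}, not only near the diagonal; near-diagonal embeddedness together with closedness does not give embeddedness at a pair $(p,q)$ with $q$ far along the leaf of $p$. To get it you must transport the flow-box description along the leaf and rule out that other branches of the same leaf accumulate at $q$ (equivalently, that the leaf relation is proper/has trivial holonomy), and this is again a strong-causality argument; it is the real content of the theorem and your sketch leaves it implicit. Second, in the closedness step, confining the connecting null geodesic segments from $p_n$ to $q_n$ to a fixed compact $K'$ (condition $ii)$) does not yet allow you to extract a convergent subsequence of segments: the affine parameter lengths, and the dilation factors relating $q_n$ to the lifted geodesic through $p_n$, could a priori diverge, in which case the limiting object would be an inextendible null geodesic imprisoned in $K'$ rather than a segment from $p$ to $q$. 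Excluding (partial) imprisonment uses strong causality once more; continuous dependence on initial data alone does not do it. With these two points supplied, the Godement approach does go through, but as written they are missing, and they are exactly where the causal hypotheses have to do their work.
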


\begin{Example}
	\label{Minkowski}
	Let us consider the three-dimensional Minkowski space $\mathbb{M}^3 = (\mathbb{R}^3, g)$, where 
	\[
	g = dx^2+dy^2-dz^2.
	\]
	
	Let $\partial_z\in\mathfrak{X}(\R^3)$ be a choice of future. It is clear that $\mathbb{M}$ satisfies condition $i)$ in Theorem \ref{CharactManifolds}. The matrix representation of $g$ in the global chart $(\R^3, id)$ is
	\[
	G = \text{\normalfont diag}(1,1,-1),
	\]
	and, using Equation \eqref{Christoffel}, it is easy to see that all the Christoffel symbols vanish. Hence, the geodesic equation implies that  $\gamma:\R\rightarrow\mathbb{M}^3$ is a geodesic if and only if
	\[
	\gamma(t) = \big(v_xt+x_0, v_yt+y_0, v_zt+z_0\big),
	\]
	where $x_0,y_0,z_0,v_x,v_y,v_z\in\R$. 
	
	Let now $K\subset \mathbb{M}^3$ be any compact subset of $\mathbb{M}^3$, then $K$ is bounded and thus there exists a closed ball $\overline{B}$ that contains it. Since $\overline{B}$ is convex, any geodesic segment with endpoints in $K\subset\overline{B}$ is wholly contained in $\overline{B}$. Hence, all hypotheses of Theorem \ref{CharactManifolds} are satisfied and $\mathcal{N}= C^+/\mathcal{D}$ is a smooth manifold.
	
	It is clear that $\gamma$ is a null geodesic if and only if
	\[
	0 = g(\dot{\gamma}, \dot{\gamma}) = g\big((v_x,v_y,v_z), (v_x,v_y,v_z)\big) = v_x^2+v_y^2-v_z^2.
	\]
	
	Imposing that $\gamma$ is future-pointing is equivalent to $v_z>0$, and hence we can write
	\[
	v_z = \sqrt{v_x^2+v_y^2}.
	\]
	
	One can always reparametrize $\gamma$ so that $z_0 = 0$ and $v_z = 1= v_x^2+v_y^2$. Then,
	\[
	\gamma(t) = \big(v_x^2 t+x_0, v_y^2 t+y_0, t\big),
	\]
	and the geodesic is completely determined by $(x_0,y_0)\in\{z = 0\}\cong\R^2$ and $(v_x,v_y)\in ST_{(x_0,y_0)}\{z = 0\}\cong ST_{(x_0,y_0)}\R^2$. It is also clear that any two such pairs will determine a unique unparametrized future pointing null geodesic and, hence, $\mathcal{N} \cong ST\R^2$.
\end{Example}

We now present the canonical contact structure on the space of null geodesics $\mathcal{N}$. Let us introduce the concept of sky.

\begin{Definition}
	Let $x\in M$. The \textbf{sky} of $x$ is
	\[
	\mathfrak{S}_x=\{\gamma\in\mathcal{N}\ |\ x\in\gamma\subset M\}\subset\mathcal{N},
	\]
	the set of geodesics that contain $x$.
\end{Definition}

Note that, for any $x\in M$, the sky of $x$ is in correspondence with the projectivisation of $C^+_x$, since any one of such projectivised vectors defines an unparametrized null geodesic that contains $x$, and any null geodesic through $x$ is tangent to a line in $C^+_x$. Hence, $\mathfrak{S}_x\cong \S^{m-2}$, where $m$ is the dimension of $M$.

The contact structure on $\mathcal{N}$ is defined in terms of the skies of the points of $M$.

\begin{Definition}
	The \textbf{canonical contact structure} of the space of null geodesics $\mathcal{N}$ is the field of hyperplanes $\mathcal{H}$ on $\mathcal{N}$ defined pointwise as follows. Let $\gamma\in\mathcal{N}$ and let $x,y\in\gamma\subset M$ be close enough such that $T_\gamma\mathfrak{S}_x\cap T_\gamma\mathfrak{S}_y = \{0\}$. Then,
	\[
	\mathcal{H}_\gamma = T_\gamma\mathfrak{S}_x\oplus T_\gamma\mathfrak{S}_y.
	\]
\end{Definition}

For $\mathcal{H}$ to be well defined it is necessary to show that one can always find such two points $x,y\in\gamma$. In addition, one must show that the direct sum $T_\gamma \mathfrak{S}_x\oplus T_\gamma\mathfrak{S}_y$ is independent of $x$ and $y$ and that it varies smoothly with respect to $\gamma$. It is also necessary to show that $\mathcal{H}$ is indeed a contact structure on $\mathcal{N}$. All of these verifications are beyond the scope of this work, so we refer to \cite[Sec 2.4]{bautista}. However, it is worth noting that for any $x\in\gamma$, the sky of $x$ is tangent to the contact distribution at $\gamma$.
\section{Engel Structures and Prolongations}
\label{EngelSection}
We finally move on to Engel structures and their relation to contact manifolds and Lorentzian manifolds via prolongation and deprolongation maps, which will be central in Chapter \ref{ProlandDeprolChap}.

\begin{Definition}
	Let $M$ be a manifold and $\xi_1,\xi_2\subseteq TM$ distributions on $M$. We define the distribution
	\[
	[\xi_1,\xi_2] = \bigsqcup\limits_{x\in M}\{[X,Y]_x\ |\ X\in \xi_1,\ Y\in \xi_2\}.
	\]
\end{Definition}

\begin{Definition}
	Let $M$ be a four-manifold. A rank-three distribution $\mathcal{E}\subset TM$ on $M$ is said to be an \textbf{even-contact structure} if it is everywhere non-integrable, i.e. if $[\mathcal{E},\mathcal{E}] = TM$.
\end{Definition}
\begin{Definition}
	Let $M$ be a four-manifold. A rank-two distribution $\mathcal{D}\subset TM$ on $M$ is an \textbf{Engel structure} (or simply Engel) if $\mathcal{E} = [\mathcal{D},\mathcal{D}]$ is an even-contact structure on $M$. We will say that $\mathcal{D}$ is an Engel structure (or simply Engel) at $x\in M$ if there exists a neighbourhood $U$ of $x$ in $M$ such that $\mathcal{D}|_U$ is an Engel structure on $U$.
\end{Definition}

Let us now present some well-known results on Engel structures.
\begin{Proposition}
	Let $M$ be a four-manifold. Then,
	\begin{enumerate}
		\item if $\mathcal{E}$ is an even-contact structure on $M$, there exists a unique line field $\mathcal{W}\subset\mathcal{E}$ defined by $[\mathcal{W}, \mathcal{E}] \subseteq\mathcal{E}$. The line field $\mathcal{W}$ is called the \textbf{kernel} (or characteristic line field) of the even-contact distribution.
		
		\item if $\mathcal{D}$ is an Engel structure on $M$ and $\mathcal{E} = [\mathcal{D}, \mathcal{D}]$, it holds that $\mathcal{W}\subset\mathcal{D}$.
	\end{enumerate}
\end{Proposition}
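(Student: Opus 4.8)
The plan is to prove the two claims separately, both by local computation in a suitable adapted frame, exploiting the defining bracket conditions. For claim (i), the goal is to produce a line field $\mathcal{W}\subset\mathcal{E}$ with $[\mathcal{W},\mathcal{E}]\subseteq\mathcal{E}$ and show uniqueness. First I would work locally, writing $\mathcal{E}=\ker\beta$ for a one-form $\beta$ (possible by Lemma \ref{kerneloneform}, at least after passing to a neighbourhood where $\mathcal{E}$ is coorientable). The even-contact condition $[\mathcal{E},\mathcal{E}]=TM$ translates, via the identity $d\beta(X,Y)=X\beta(Y)-Y\beta(X)-\beta([X,Y])$, into the statement that $d\beta|_{\mathcal{E}}$ is a nonzero two-form on the rank-three bundle $\mathcal{E}$; since a skew form on an odd-dimensional space is degenerate, $d\beta|_{\mathcal{E}}$ has a one-dimensional kernel at each point, which depends smoothly on the base point. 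I would define $\mathcal{W}_x$ to be that kernel, i.e. $\mathcal{W}_x=\{X\in\mathcal{E}_x\ |\ \iota_X d\beta|_{\mathcal{E}_x}=0\}$, and then check that $[\mathcal{W},\mathcal{E}]\subseteq\mathcal{E}$: for $W\in\mathcal{W}$ and $Y\in\mathcal{E}$ one has $\beta([W,Y])=-d\beta(W,Y)=0$ since $W$ lies in the kernel of $d\beta|_{\mathcal{E}}$. Uniqueness follows because any line field $\mathcal{W}'\subset\mathcal{E}$ with $[\mathcal{W}',\mathcal{E}]\subseteq\mathcal{E}$ must, by the same bracket identity, satisfy $d\beta(W',Y)=0$ for all $Y\in\mathcal{E}$, forcing $W'_x\subseteq\mathcal{W}_x$ and hence equality by dimension. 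One should also note the definition is independent of the choice of $\beta$, since another defining form is $f\beta$ with $f$ nowhere zero, and $d(f\beta)|_{\mathcal{E}}=f\,d\beta|_{\mathcal{E}}$ has the same kernel.

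For claim (ii), let $\mathcal{D}$ be Engel and $\mathcal{E}=[\mathcal{D},\mathcal{D}]$, which is even-contact by definition, so $\mathcal{W}$ exists by (i). I would pick a local frame $X_1,X_2$ of $\mathcal{D}$; then $\mathcal{E}=\lspan X_1,X_2,[X_1,X_2]\rspan$, and the even-contact condition means this really is rank three. Writing $\mathcal{E}=\ker\beta$ as before, I want to show $\mathcal{W}\subseteq\mathcal{D}$, equivalently that the kernel of $d\beta|_{\mathcal{E}}$ lies in the plane $\mathcal{D}$. The key computation is that $d\beta|_{\mathcal{D}}\equiv 0$: indeed $d\beta(X_i,X_j)=-\beta([X_i,X_j])=0$ because $[X_i,X_j]\in\mathcal{E}=\ker\beta$. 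Thus $\mathcal{D}$ is an isotropic plane for the rank-two skew form $d\beta|_{\mathcal{E}}$ on the three-space $\mathcal{E}$; a maximal isotropic plane of a rank-two skew form on a three-space necessarily contains its one-dimensional kernel (if $v\in\mathcal{D}$ and $v\notin\mathcal{W}$, then $d\beta(v,-)$ is a nonzero functional on $\mathcal{E}$ whose kernel is the plane $v^{\perp}\supseteq\mathcal{D}$, but $v^\perp$ also contains $\mathcal{W}$, so $v^\perp=\mathcal{D}$ would force... — more cleanly: the radical $\mathcal{W}$ of $d\beta|_{\mathcal{E}}$ is contained in every maximal isotropic subspace, and $\mathcal{D}$, being $2$-dimensional and isotropic in a $3$-space where the form has $1$-dimensional radical, is maximal isotropic). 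Hence $\mathcal{W}\subseteq\mathcal{D}$.

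The main obstacle I anticipate is purely bookkeeping rather than conceptual: making the linear-algebra step in (ii) airtight — precisely, that a $2$-dimensional subspace on which a rank-$2$ alternating form on a $3$-dimensional space vanishes identically must contain the $1$-dimensional radical of that form. This is elementary (decompose $\mathcal{E}=\mathcal{W}\oplus V$ with $d\beta|_{\mathcal{E}}$ nondegenerate on the $2$-plane $V$; an isotropic subspace meeting $V$ in more than a line is impossible since $d\beta|_V$ is symplectic, so $\dim(\mathcal{D}\cap V)\le 1$, forcing $\dim(\mathcal{D}\cap\mathcal{W})\ge 1$, i.e. $\mathcal{W}\subseteq\mathcal{D}$), but it must be stated carefully. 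A secondary subtlety is the smoothness and global-versus-local issue: the kernel line field is defined pointwise via linear algebra, so one should briefly remark that it varies smoothly (e.g. locally it is cut out by the smooth condition $\iota_X d\beta\wedge\beta = 0$ within $\mathcal{E}$, or one exhibits a local frame for $\mathcal{W}$ explicitly) and that although $\beta$ may only exist locally, the resulting $\mathcal{W}$ is independent of choices and hence patches to a global line field.
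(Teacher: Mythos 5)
Your proof of part (i) is essentially the paper's own argument: represent $\mathcal{E}=\ker\theta$ locally, note that the even-contact condition makes $d\theta|_{\mathcal{E}}$ of maximal (i.e.\ rank two) rank on the odd-dimensional bundle $\mathcal{E}$, define $\mathcal{W}$ as its kernel, and use $\theta([X,Y])=X\theta(Y)-Y\theta(X)-d\theta(X,Y)$ both for $[\mathcal{W},\mathcal{E}]\subseteq\mathcal{E}$ and for uniqueness; your extra remarks on smoothness and on independence of the defining form are sensible additions the paper leaves implicit. For part (ii), however, you take a genuinely different route. The paper argues by contradiction at the level of brackets: if $\mathcal{W}_x\nsubseteq\mathcal{D}_x$, then $\mathcal{E}=\mathcal{W}\oplus\mathcal{D}$ near $x$, so $[\mathcal{E},\mathcal{E}]\subseteq[\mathcal{W},\mathcal{E}]+[\mathcal{D},\mathcal{D}]\subseteq\mathcal{E}$ at $x$, contradicting $[\mathcal{E},\mathcal{E}]=TM$. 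You instead work pointwise with the two-form: $\mathcal{D}$ is isotropic for $d\theta|_{\mathcal{E}}$ because $d\theta(X_1,X_2)=-\theta([X_1,X_2])=0$, and a two-dimensional isotropic subspace of a rank-two skew form on a three-dimensional space is maximal isotropic, hence contains the radical $\mathcal{W}$. Your version is purely linear-algebraic once the isotropy of $\mathcal{D}$ is established, and arguably isolates the structural reason more cleanly; the paper's version avoids any linear algebra about isotropic subspaces and uses only the defining bracket conditions. One caution: your parenthetical shortcut "$\dim(\mathcal{D}\cap V)\le 1$ forces $\dim(\mathcal{D}\cap\mathcal{W})\ge 1$" is not valid as stated (a plane such as $\lspan e_2,\,e_1+e_3\rspan$ in $\lspan e_1\rspan\oplus\lspan e_2,e_3\rspan$ meets $V$ in a line yet misses $\mathcal{W}$), but the argument you then give correctly — that adjoining the radical to an isotropic subspace keeps it isotropic, so a maximal isotropic subspace must contain $\mathcal{W}$, or equivalently that $\mathcal{D}\cap\mathcal{W}=0$ would make the projection $\mathcal{D}\to V$ an isomorphism and contradict nondegeneracy of $d\theta|_V$ — closes the step, so this is a phrasing slip rather than a gap.
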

\begin{proof}
	Let $M$ be a four-manifold and $\mathcal{E}$ an even-contact structure on $M$. Since $\mathcal{E}$ is a field of hyperplanes on $M$, the distribution $\mathcal{E}$ can be represented, at least locally, as the kernel of a one-form $\theta\in\Omega^1(U)$, for a neighbourhood $U\subset M$, as stated in Lemma \ref{kerneloneform}. Then, the non-integrability condition of $\mathcal{E}$ is equivalent to $d\theta|_{\mathcal{E}_y}$ being a two-form of maximal rank for any $y\in U$, see \cite[p. 3]{geiges}. Since $\mathcal{E}_y$ is of odd dimension, the two-form $d\theta|_{\mathcal{E}_y}$ will have a non-zero kernel. Since the two-form is of maximal rank, the kernel will be of minimal dimension, that is, dimension one. These kernels define the line field $\mathcal{W}_y\subset\mathcal{E}_y$.
	
	It is clear that, if $X\in \mathcal{W}$ and $Y\in\mathcal{E}$, one has
	\[
	\theta([X,Y]) = X(\theta(Y))-Y(\theta(X))-d\theta(X,Y) = 0, 
	\]
	which implies that $[X,Y]\in\mathcal{E}$, and $[\mathcal{W},\mathcal{E}]\subseteq\mathcal{E}$. If $\mathcal{W}'$  is another line field in $\mathcal{E}$ satisfying $[\mathcal{W}',\mathcal{E}]\subseteq\mathcal{E}$, by the above expression, any vector field in $\mathcal{W}'$ must lie in the kernel of $d\theta$ necessarily.
	
	Now, let $\mathcal{D}$ be an Engel structure on $M$ such that $[\mathcal{D},\mathcal{D}] = \mathcal{E}$. Let $\mathcal{W}$ be the kernel of $\mathcal{E}$. Assume $\mathcal{W}\subset\mathcal{D}$ does not hold. Then, there exists $x\in M$ for which $\mathcal{W}_x\nsubseteq\mathcal{D}_x$. Then, necessarily, $\mathcal{W}_y\oplus\mathcal{D}_y = \mathcal{E}_y$ for all $y$ in a neighbourhood of $x$ and therefore $[\mathcal{E},\mathcal{E}]_x = [\mathcal{D},\mathcal{D}]_x = \mathcal{E}_x$, which is a contradiction. Hence, the kernel does lie within $\mathcal{D}$.
\end{proof}

Let $B^3 = \{p\in\R^3\ |\ ||p||<1\}$ be the three-dimensional ball and consider the four-manifold $B^3\times [0,1]$ with coordinates $(x,y,z,t)$. Let $X\in\mathfrak{X}(B^3\times[0,1])$ be such that $X(x,y,z,t)\in T_{(x,y,z,t)}(B^3\times \{t\})$ and define $\mathcal{D} = \lspan \partial_t,X\rspan$. Let us define $\dot{X} = [\partial_t,X]$ and $\ddot{X} = [\partial_t,\dot{X}]$, which are also tangent to the level sets $B^3\times\{t\}$ for all $t\in[0,1]$. Hence, the three vector fields $X,\dot{X},\ddot{X}$ can be regarded as uniparametric families of vector fields on $B^3$ with parameter $t\in[0,1]$. Let us denote such families by $X_t,\dot{X}_t,\ddot{X}_t$.

Since $( \partial_x,\partial_y,\partial_z)$ gives a basis of $T_p B^3$ for all $p\in B^3$, all tangent spaces of $B^3$ can be identified, as well as all the fibres of the unit tangent bundle $S TB^3$. Hence, for $p\in B^3$, the map $t\mapsto X_t(p)$  describes a curve in $\S^2$, which allows us to see the distribution $\mathcal{D}$ as a $B^3$-family of curves on $\S^2$. See \cite{delpino, frannonint, existenceh} for more details.

\begin{Definition}
	Let $\gamma:[0,1]\rightarrow\S^2$ be a smooth curve such that $\gamma'(t)\neq 0$ for all $t\in [0,1]$. Let $\mathfrak{t}(t)= \frac{\gamma'(t)}{||\gamma'(t)||}$ and $\mathfrak{n}(t)$ be the unique vector field such that $(\mathfrak{t}(t),\mathfrak{n}(t))$ is an orthonormal oriented basis of $T_{\gamma(t)}\S^2$. 
	
	We say a point $\gamma(t)$ is an \textbf{inflection point} of $\gamma$ if $\langle\mathfrak{t}'(t),\mathfrak{n}(t)\rangle = 0$. We say that $\gamma$ is \textbf{convex} if it has no inflection points.
\end{Definition}
\begin{Theorem} \cite[Prop. 8]{existence h}.
	\label{Engelchar}
	Following with the previous notation, a rank-two distribution $\mathcal{D} = \lspan\partial_t,X\rspan$ on $B^3\times[0,1]$ is an Engel structure at $(p,t)\in B^3\times[0,1]$ if both $\dot{X}(p,t)\neq 0$ and one of the following two conditions hold:
	\begin{enumerate}
		\item the curve $X_p:[0,1]\rightarrow\S^2$ has no inflection point at $t$,
		\item the rank-two distribution $\lspan\dot{X}_t,\ddot{X}_t\rspan$ is a contact structure on $N\times\{t\}$ for some neighbourhood $N\subseteq B^3$ of $p$.
	\end{enumerate}
\end{Theorem}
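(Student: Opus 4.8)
The plan is to compute the iterated bracket flag of $\mathcal{D}$ and read off the even-contact condition for $\mathcal{E}:=[\mathcal{D},\mathcal{D}]$ directly at $(p,t)$. Since $X$ is tangent to the slices $B^3\times\{t\}$, $\partial_t$ is transverse to them, and $[\partial_t,X]=\dot X$, one gets $\mathcal{E}=\lspan\partial_t,X,\dot X\rspan$. As $X$ takes values in $\S^2$, differentiating $\langle X,X\rangle=1$ along $\partial_t$ shows $\dot X\perp X$, so $\{\partial_t,X,\dot X\}$ is a frame — and $\mathcal{E}$ an honest rank-three distribution near $(p,t)$ — exactly when $\dot X(p,t)\neq0$. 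Thus $\mathcal{D}$ is Engel at $(p,t)$ if and only if $\dot X(p,t)\neq0$ and $[\mathcal{E},\mathcal{E}]=TM$ on a neighbourhood of $(p,t)$.

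I would then compute $[\mathcal{E},\mathcal{E}]$. Bracketing the frame produces $[\partial_t,X]=\dot X\in\mathcal{E}$, $[\partial_t,\dot X]=\ddot X$ and $[X,\dot X]$; the latter two are again tangent to the slices, so $[\mathcal{E},\mathcal{E}]=\lspan\partial_t,X,\dot X,\ddot X,[X,\dot X]\rspan$. Since $\{\partial_t,X,\dot X\}$ already has rank three and the two extra generators lie in the slices, $[\mathcal{E},\mathcal{E}]=TM$ at a point precisely when $\ddot X$ or $[X,\dot X]$ leaves the plane $\lspan X,\dot X\rspan$ there; and because having full rank is an open condition, it suffices to check this at $(p,t)$. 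So the theorem reduces to showing: condition (i) holds $\iff$ $\ddot X(p,t)\notin\lspan X(p,t),\dot X(p,t)\rspan$, and, when it fails, condition (ii) implies $[X,\dot X](p,t)\notin\lspan X(p,t),\dot X(p,t)\rspan$.

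For the first equivalence, identify $\ddot X(p,t)$ with $\gamma''(t)$, where $\gamma=X_p:[0,1]\to\S^2$. Differentiating $\langle\gamma,\gamma\rangle=1$ twice and expanding in the spherical Frenet frame $(\mathfrak{t},\mathfrak{n})$ yields $\gamma''=-\|\gamma'\|^2\gamma+\|\gamma'\|'\,\mathfrak{t}+\|\gamma'\|\langle\mathfrak{t}',\mathfrak{n}\rangle\,\mathfrak{n}$. As $\lspan X(p,t),\dot X(p,t)\rspan=\lspan\gamma(t),\mathfrak{t}(t)\rspan$ and $\mathfrak{n}(t)$ spans the complementary line in $T_pB^3$, the component of $\ddot X(p,t)$ transverse to $\lspan X,\dot X\rspan$ is $\|\gamma'(t)\|\langle\mathfrak{t}'(t),\mathfrak{n}(t)\rangle\,\mathfrak{n}(t)$, which is non-zero exactly when $\gamma$ has no inflection point at $t$. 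Hence condition (i) forces $[\mathcal{E},\mathcal{E}]_{(p,t)}=TM$, so $\mathcal{D}$ is Engel at $(p,t)$.

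For condition (ii), one is in the remaining case $\ddot X(p,t)\in\lspan X(p,t),\dot X(p,t)\rspan$, i.e. $\gamma$ has an inflection at $t$; the Frenet decomposition then gives $\ddot X(p,t)=-\|\dot X(p,t)\|^2X(p,t)+c\,\dot X(p,t)$ with $-\|\dot X(p,t)\|^2\neq0$, so $\lspan\dot X_t,\ddot X_t\rspan$ coincides with $\lspan X,\dot X\rspan$ at $p$, is an honest plane field near $p$, and being contact at $p$ means $[\dot X_t,\ddot X_t](p)\notin\lspan X(p,t),\dot X(p,t)\rspan$. Since $\dot X$ and $\ddot X$ are tangent to the slices, this slice bracket equals the ambient bracket $[\dot X,\ddot X](p,t)$; using that along the inflection locus $\ddot X$ becomes a section of $\mathcal{E}$, so that $[\dot X,\ddot X]$ lands in $[\mathcal{E},\mathcal{E}]$, together with the Leibniz rule and $\ddot X\equiv-\|\dot X\|^2X\bmod\dot X$, one extracts from it a vector of $[\mathcal{E},\mathcal{E}]_{(p,t)}$ transverse to $\mathcal{E}_{(p,t)}$; hence $[\mathcal{E},\mathcal{E}]_{(p,t)}=TM$ and $\mathcal{D}$ is Engel at $(p,t)$. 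The main obstacle is exactly this last step: condition (ii) is a contact condition for a plane field living inside a single slice, whereas the even-contact property of $\mathcal{E}$ is a bracket-generating condition on the four-manifold, and matching the two requires careful bookkeeping of which brackets actually appear in $[\mathcal{E},\mathcal{E}]$ near an inflection point, as well as controlling the extra term the Leibniz computation produces when the geodesic curvature of the family varies transversally to $\gamma$.
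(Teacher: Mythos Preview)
The paper does not prove this theorem: it is quoted from \cite[Prop.~8]{existenceh} and used as a black box in the two examples that follow. So there is no in-paper argument to compare against; I will comment on your proposal directly.

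Your treatment of condition~(i) is clean and correct. The identification $\mathcal{E}=\lspan\partial_t,X,\dot X\rspan$, the observation that $\dot X\perp X$ forces rank three exactly when $\dot X(p,t)\neq0$, and the Frenet computation showing that the $\mathfrak n$-component of $\ddot X(p,t)$ is $\|\gamma'(t)\|\langle\mathfrak t'(t),\mathfrak n(t)\rangle$ are all sound, and they give $[\mathcal{E},\mathcal{E}]_{(p,t)}=TM$ precisely when $\gamma$ has no inflection at $t$.

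For condition~(ii) there is a genuine gap, and you have correctly located it. The brackets that actually generate $[\mathcal{E},\mathcal{E}]$ from the frame $\{\partial_t,X,\dot X\}$ are $\ddot X$ and $[X,\dot X]$; the vector $[\dot X,\ddot X]$ that the contact hypothesis controls is \emph{not} among them, because $\ddot X$ is not a section of $\mathcal{E}$ near an inflection point (only its value at $(p,t)$ lies in $\mathcal{E}_{(p,t)}$). Your attempted workaround, writing $\ddot X=fX+g\dot X+h\mathfrak n$ with $h(p)=0$ and expanding $[\dot X,\ddot X]$ by Leibniz, produces
\[
[\dot X,\ddot X](p)=-f(p)\,[X,\dot X](p)+(\text{terms in }\lspan X,\dot X\rspan)+\dot X(h)(p)\,\mathfrak n(p),
\]
and the contact condition only tells you that the sum of the $\mathfrak n$-components is nonzero. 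It does \emph{not} isolate the $\mathfrak n$-component of $[X,\dot X](p)$, because the derivative $\dot X(h)(p)$ of the geodesic curvature in the slice direction can contribute independently. This is exactly the ``extra term'' you flag in your last sentence, and without an additional argument controlling it the implication (ii) $\Rightarrow$ Engel does not follow from what you have written. To complete the proof you would need to consult the original argument in \cite{existenceh}.
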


Theorem \ref{Engelchar} allows us to define two types of Engel manifolds. We first present the Cartan prolongation of a contact three-dimensional manifold.

\begin{Example}[The Cartan prolongation]
	\label{Cartanprol}
	Let $(M,\xi)$ be a contact three-manifold and consider the bundle
	\[
	\begin{tikzcd}
		\S^1\arrow[hookrightarrow]{r} &S(\xi) \arrow{d}{\pi_C}\\&M
	\end{tikzcd}
	\]
	where we define $S(\xi)_x$ as the quotient of $\xi_x-\{0\}$ by the equivalence relation $v\sim\lambda v$ for all $\lambda\in\R^+$. Then, $S(\xi)$ carries a canonical Engel structure which is defined as follows. A point in $S(\xi)$ is a pair $(x,L)$ with $x\in M$ and $L\in S(\xi)_x$. Hence, $L$ can be identified with an oriented line in $\xi_x$. The Engel structure is given by
	\[
	\mathcal{D}_{(x,L)} = (T_{(x,L)}\pi_C)^{-1}(L).
	\]
	
	Indeed, let $x\in M$ and $N\subseteq M$ be a chart of $M$ centred at $x$ and diffeomorphic to $B^3$. Let $Y,Z\in\mathfrak{X}(N)$ such that $\lspan Y(y),Z(y)\rspan= \xi_y$ for all $y\in N$. Since $N$ is contractible, $S(\xi)|_N = N\times\S^1$, and we can parametrize $\big(x,L = \lspan Y(x)\cos t+Z(x)\sin t\rspan\big)$ with $t\in\S^1$ on $S(\xi)|_N$. Then, the described rank-two distribution reads
	\[
	\mathcal{D}_{(x,L(t))} = \lspan\partial_t, X(t) = Y\cos t+Z\sin t\rspan.
	\]

	Let $t\in\S^1$. Since $\dot{X}(x,t) \neq0$ and condition $ii)$ of Theorem \ref{Engelchar} are satisfied, $\mathcal{D}$ is Engel at $(x,t)$. Since $(x,t)$ is arbitrary, $\mathcal{D}$ is Engel on $S(\xi)$.
	
	Note that $\mathcal{E} = \lspan\partial_t,X, \dot{X}\rspan = \lspan\partial_t\rspan\oplus\xi$, and, since $[\partial_t,\ddot{X}] = -\dot{X}\in\mathcal{E}$, we have $\mathcal{W} = \lspan\partial_t\rspan$. 
	
	The line field $\mathcal{W}$ is said to be nice if the topological space $M/\mathcal{W}$ formed by the integral lines of $\mathcal{W}$ is a manifold and the canonical projection is a submersion. If we assume $\mathcal{W}$ is nice, then $M \cong S(\xi)/\lspan\partial t\rspan = S(\xi)/\mathcal{W}$. Also, if we denote by $p:S(\xi)\to S(\xi)/\mathcal{W}$ the canonical projection, the contact structure $\xi$ on $M$ is identified with
	\[
	\xi= p_*\mathcal{E}.
	\]
\end{Example}

A similar construction allows us to define an Engel manifold coming from a three-dimensional Lorentzian manifold.

\begin{Example}[The Lorentz prolongation]
	\label{Lorentzprol}
	Let $L$ be a Lorentzian three-manifold. The set of null vectors on $L$ defines the subbundle of cones of $TL$, $\pi|_C:C\rightarrow L$, which induces a bundle
	\[
	\begin{tikzcd}
		\S^1\arrow[hookrightarrow]{r} &\mathcal{P}C\arrow{d}{\pi_L}\\&L
	\end{tikzcd}
	\]
	where $\mathcal{P}C$ is fibrewise the projectivisation of the cone $C$. Then, $\mathcal{P} C$ carries an Engel structure defined as follows. A point $(x,s)\in\mathcal{P} C$ consists of a point $x\in L$ and a line $s$ in $C_x$. Then, let us define 
	\[
	\mathcal{D}_{(x,s)} = (T_{(x,s)}\pi_L)^{-1}(s).
	\]
	
	We will now show this distribution is indeed Engel. Let $x\in L$ and $N\subseteq L$ a chart centred at $x$ and diffeomorphic to $B^3$ and take $(V,Y,Z)$ an orthonormal frame of $TM|_N$. Since $N$ is contractible, $\mathcal{P} C|_N = N\times \S^1$. Let $\theta$ be the coordinate on the fibre $\S^1$. Then, the above defined distribution reads
	\[
	\mathcal{D}_{(x,s(\theta))} = 
\lspan\partial_\theta, X(\theta)= \cos\theta V+\sin\theta Y+Z\rspan.
	\]
	
	If we let the dot denote derivation by $\theta$, it is clear that $\dot{X}_x(\theta) = -\sin\theta V+\cos\theta Y$ is non-zero. Also, the curve $X_x$ is the intersection between the cone of future null vectors at $x$ and the unit sphere of $T_xL$ defined as $\{aV+bY+cZ\in T_xL\ |\ a^2+b^2+c^2 = 1\}$. Hence, it is convex. Thus, since condition $i)$ of Theorem \ref{Engelchar} is satisfied, $\mathcal{D}$ is Engel. Note that the line field $\partial_\theta$ is always transverse to $\mathcal{W}$, since $[\partial_\theta, \dot{X}]\notin\mathcal{E} = \lspan\partial_\theta, X,\dot{X}\rspan$.

\end{Example}

\chapter{The model $\S^2\times\S^1$}
\label{Model}
\section{The Unit Tangent Bundle of $\S^2$}
\label{UTB}
Consider the unit sphere $\S^2\subset\R^3$ given by
\[
\S^2 = \{x\in\R^3\ |\ \langle x, x \rangle = 1\},
\]
where $\langle -,-\rangle$ denotes the Euclidean metric in $\R^3$. Let $\iota:\S^2\to\R^3$ be the canonical inclusion and define the standard metric on $\S^2$ by $g_\circ= \iota^*\langle -, -\rangle$.

Let 
\[
\begin{array}{cccc}
	F:&\R^3&\to &\R \\
	&x&\mapsto & \langle x,x\rangle,
\end{array}
\]
which can be shown to be a submersion. Then, $\S^2 =  F^{-1}(1)$ and hence 
\[
T_x\S^2 = \ker(T_xF) = \{u\in T_x\R^3\ |\ \langle x, u\rangle  = 0\} 
\]
for all $x\in \S^2$. We define the unit tangent bundle of $\S^2$ at $x\in \S^2$ as
\[
ST_x\S^2 = \{u\in T_x\S^2\ |\ \langle u,u\rangle  = 1\} \cong \{u\in\S^2\ |\ \langle x,u \rangle  = 0\},
\]
and the unit tangent bundle of $\S^2$ as $ST\S^2 = \bigsqcup\limits_{x\in\S^2}ST_x\S^2$. Note that we can identify
\[
ST\S^2\cong \{(x,u)\in\S^2\times\S^2\ |\ \langle x, u\rangle = 0\}.
\] 

From Example \ref{ExampleUTB}, the unit tangent bundle $ST\S^2$ inherits the structure of $\S^1$-bundle from $T\S^2$,
	\[
\begin{tikzcd}
	\S^1 \arrow[hookrightarrow]{r} &S T\S^2 \arrow{d}{\pi}\\
	&\S^2
\end{tikzcd}
\]
and carries a canonical contact structure $\chi$ given, for $u\in ST\S^2$, by
\[
\chi_u = (T_u\pi)^{-1}(\lspan  u\rspan^\perp).
\]

It can be shown that the geodesics of $(\S^2, g_\circ)$ are curves of constant speed whose image is a great circle in $\S^2$ \cite[Prop. 5.27]{lee}. Hence, the unique, modulus reparametrisation, geodesic on $(\S^2, g_c)$ going through $x\in \S^2$ with tangent vector $u\in ST_x\S^2$ is
\[
\gamma(t) = x\cos t+u\sin t,
\]
where we make use of the identification $ST\S^2\subset\S^2\times\S^2$. This description of the geodesics of $(\S^2, g_\circ)$ will be repeatedly used in the following sections.
\section{The Lorentzian Manifold $\S^2\times\S^1$}
\label{LorentzianS2S1}
Let $(\S^2, g_\circ)$ be the unit sphere in $\R^3$ equipped with its standard Riemannian metric. Consider the manifold $M = \S^2\times\S^1$ and let $\pi_{\S^1}:M\to\S^1$ be the projection onto the second factor. Let $\iota':\S^1\to\R^2$ be the inclusion and $\langle - , -\rangle_2$ be the Euclidean metric on $\R^2$. Define the pseudo-Riemannian metric 
\[
g_c = g_\circ - \frac{1}{c^2}(\iota'\circ\pi_{\S^1})^*\langle -,-\rangle_2
\] 
on $M$, for $c\in \mathbb{N}^+$. Note that if we let $t$ be the angle coordinate on $\S^1$, the metric $g_c$ reads $g _c= g_\circ -\frac{1}{c^2}dt^2$. 

The pair $(M,g)$ is a Lorentzian manifold in which $\S^2\times\{t\}$ is a space-like surface for all $t\in\S^1$, that is, the metric $g$ restricted to the tangent space of $\S^2\times\{t\}$ is positive-definite. In addition, for all $(x,t)\in \S^2$, the eigenspace of $g_{(x,t)}$ associated with the unique negative eigenvalue of the metric is $T_{(x,t)}(\{x\}\times\S^1 )$. One can take the vector field $(0,\partial_t)\in T(\S^2\times\S^1)\cong T\S^2\times T\S^1$ as a choice of future, and hence $M$ is a spacetime. Similarly to Example \ref{Minkowski}, it is clear that $M$ satisfies condition $i)$ of Theorem \ref{CharactManifolds}. In addition, since $M$ is the cartesian product of two compact manifolds, it follows that $M$ is compact and hence condition $ii)$ of Theorem \ref{CharactManifolds} is satisfied. Hence, the space of null geodesics of $(M,g_c)$ is a differentiable manifold.

Consider $(x,t)\in M$ and $(U, \varphi)$ a local chart of $\S^2$ centred at $x$. Let us also parametrize $\S^1$ around $t$ by the angle coordinate. The cartesian product of both charts provides a chart for $M$ over an open subset $V$ of $M$ around $(x,t)$, for which the metric $g$ has the following matrix representation,
\[(g_c)_{ij}= 
\left(\begin{array}{ccc}
	&         &     0 \\
	\multicolumn{2}{c}{\smash{\raisebox{.5\normalbaselineskip}{$(g_\circ)_{ij}$}}}
	&    0 \\
	\\[-\normalbaselineskip]
	0 &  0&     -\frac{1}{c^2}
\end{array}\right),
\]
where $(g_\circ)_{ij}$ is the matrix representation of $g_\circ$ in the local chart $(U,\varphi)$. The inverse matrix of  $(g_c)_{ij}$ is clearly
\[(g_c)^{ij}= 
\left(\begin{array}{ccc}
	&         &     0 \\
	\multicolumn{2}{c}{\smash{\raisebox{.5\normalbaselineskip}{$(g_\circ)^{ij}$}}}
	&    0 \\
	\\[-\normalbaselineskip]
	0 &  0&      -c^2
\end{array}\right).
\]

It follows from Equation \eqref{Christoffel} that the Christoffel symbol $\Gamma_{ij}^k$ vanishes whenever $i$, $j$ or $k$ equals 3, and that all the others are exactly the Christoffel symbols of $g_\circ$ in the local chart $(U,\varphi)$. Hence, a curve $\gamma:I\to  V $ given by $\gamma(s) = \big(\varphi(y_1(s), y_2(s)), t(s)\big)$ is a geodesic if and only if
\[
\begin{cases}
	\ddot{y}_1 + \Gamma^1_{11}\dot{y}_1^2 + \Gamma^1_{12}\dot{y}_1\dot{y}_2 + \Gamma^1_{22}\dot{y}_2^2 =0\\
	\ddot{y}_2 + \Gamma^2_{11}\dot{y}_1^2 + \Gamma^2_{12}\dot{y}_1\dot{y}_2 + \Gamma^2_{22}\dot{y}_2^2 = 0\\
	\ddot{t} = 0,
\end{cases}
\]
that is, if and only if $t(s) = a+bs$ for some $a, b\in\R$ and $\varphi(y_1(s), y_2(s))$ is a geodesic in $\S^2$. Let $u(s)\in T\S^2$ be the vector tangent to the curve $\varphi(y_1(s), y_2(s))$. Since $\S^2\times\{t\}$ is a space-like surface for all $t\in\S^1$, we can suppose, by reparametrising $\gamma$, that 
\[
g_c\big((u,0), (u,0)\big) = g_\circ(u,u) = 1. 
\]

Then, we find 
\[
g_c(\dot{\gamma}, \dot{\gamma}) = g_\circ(u,u)-\frac{b^2}{c^2}\langle \partial_t, \partial_t\rangle_2 = 1-\frac{b^2}{c^2},
\]
and it follows that $\gamma$ is a future-pointing null geodesic if and only if $b = c$. By uniqueness in Theorem \ref{EandUgeodesics}, we can state that globally, all the null geodesics of $(M, g_c)$ modulus reparametrisation are of the form
\[
\gamma(s) = \big(\mu(s), a+cs\big),
\]
where $\mu$ is a unit-speed great circle in $\S^2$. Note that $\gamma$ intersects $\S^2\times\{0\}$ at least at one point, which implies that, by reparametrising, we can suppose $a = 0$.

Thus, the space of null geodesics $\mathcal{N}_c$ of $(M,g_c)$ defined in Section \ref{SpaceOfNullG} is
\[
\mathcal{N}_c = \{(\mu(s), cs)\ |\ \mu \text{ is a unit-speed great circle in }\S^2\}.
\]

We look first at the case $c = 1$. The speeds at which the time direction $\S^1$ and the great circle in $\S^2$ are travelled are the same. Hence, the ratio at which the geodesic travels each of them is one to one. This implies that there is a unique $x\in\S^2$ such that $(x,0)\in\gamma$, for a geodesic $\gamma\in\mathcal{N}_1$.  Thus, for a null geodesic $\gamma\in\mathcal{N}_1$, let $s_0\in\R$ be such that $\gamma(s_0) = (x,0)\in\S^2\times\{0\}$. Then, $\gamma$ is completely determined by $x\in\S^2$ and by $T_{(x,0)}\pi_{\S^2}\big(\dot{\gamma}(s_0)\big)\in T_x\S^2$, where $\pi_{\S^2}:M\to\S^2$ is the projection onto the first factor. Note that $T_{(x,0)}\pi_{\S^2}\big(\dot{\gamma}(s_0)\big)\in T_x\S^2$ is simply the orthogonal projection of $\dot{\gamma}(s_0)\in T_{(x,0)}M$ onto the tangent space $T_{(x,0)}(\S^2\times\{0\})$, that is, 
\[
T_{(x,0)}\pi_{\S^2}\big(\dot{\gamma}(s_0)\big)= \dot{\mu}(s_0),
\]
which is a unit length vector by definition. We have just showed that
\[
\mathcal{N}_1 \cong ST\S^2.
\]

Let us now consider $\mathcal{N}_c$ with $c>1$. The number of turns the geodesic winds around the time direction $\S^1$ versus the number of times it winds around a great circle becomes $c$ to 1. Since all geodesics are travelled at constant speed, every $\gamma\in\mathcal{N}_c$ intersects the submanifold $\S^2\times\{0\}$ exactly at $c$ points, equidistantly spread over the great circle $\mu = \pi_{\S^2}(\gamma)$.   

Let $x\in\S^2$ and $u\in ST_x\S^2$. Then, $u$ defines a unique great circle $\mu$ in $\S^2$ such that $\mu(0) = x$ and $\dot{\mu}(0) = u$, which is parametrized by the arc. There is also a unique null geodesic $\gamma$ in $M$ such that $\pi_{\S^2}(\gamma(s) ) = \mu(s)$ with $\gamma(0) = (x,0)$. It is clear that $\dot{\gamma}(0)\in T_{(x,0)}M$ is the unique vector on the cone $C_{(x,0)}$ that orthogonally projects onto $(u,0)\in T_{(x,0)}M\cong T_x\S^2\times T_0\S^1$ with respect to $T_{(x,0)}(\S^2\times\{0\})$. Now, the geodesic $\gamma$ intersects $\S^2\times\{0\}$ exactly at $\big(\mu(\frac{2\pi j}{c}), 0\big)=:(x_j,0)\in M$ for $j = 0,\ldots, c-1$. The vector $\dot{\gamma}(\frac{2\pi j}{c})\in T_{\gamma(\frac{2\pi j}{c})}M$ is the unique vector on $C_{\gamma(\frac{2\pi j}{c})}$ with the property that $T_{\gamma(\frac{2\pi j}{c})}\pi_{\S^2}(\dot{\gamma}(\frac{2\pi j}{c})) = \dot{\mu}(\frac{2\pi j}{c}):=u_j$. That is, $\dot{\gamma}(\frac{2\pi j}{c})$ is the only vector on the light cone of $\gamma(\frac{2\pi j}{c})$ whose orthogonal projection onto $T_{\gamma(\frac{2\pi j}{c})}(\S^2\times\{0\})$ is $(u_j, 0)$.

Hence, we have just showed that any pair $(x,u)\in ST\S^2$ defines a geodesic in $\mathcal{N}_c$. Conversely, any null geodesic can be described by picking one such pair, but the choice is not unique. Indeed, any pair of the form $(x_j, u_j)$ as defined above describes the same null geodesic as $(x,u)$. Hence, we need to identify all such points in $ST\S^2$ in order to get a proper description of $\mathcal{N}_c$. This can be done as follows. 

Note that $x_j$ is obtained by a $\frac{2\pi j}{c}$ radians rotation of $x_j$ on $\mu$, which is exactly the same as a rotation of $\frac{2\pi j}{c}$ radians about the axis $x\times u\in\S^2$, regarding $(x,u)\in ST\S^2\subset\S^2\times\S^2$. It is clear that 
\begin{equation}
(x_j, u_j) = \begin{pmatrix}
	\cos\frac{2\pi j}{c} & \sin\frac{2\pi j}{c}\\
	-\sin\frac{2\pi j}{c}&
	\cos\frac{2\pi j}{c}
\end{pmatrix}\begin{pmatrix}
x\\u
\end{pmatrix}.
\end{equation}

Hence, if we define the $\Z_c$ action on $ST\S^2$ generated by 
\begin{equation}
	\label{ActionZc}
(y, v)\mapsto \begin{pmatrix}
	\cos\frac{2\pi}{c} & \sin\frac{2\pi }{c}\\
	-\sin\frac{2\pi }{c}&
	\cos\frac{2\pi }{c}
\end{pmatrix}\begin{pmatrix}
	y\\v
\end{pmatrix},
\end{equation}
we have showed that
\[
\mathcal{N}_c\cong ST\S^2/\Z_c,
\]
where $ST\S^2/\Z_c$ denotes the orbit space of $ST\S^2$ under the action of $\Z_c$. We shall now see that this space is indeed a differentiable manifold. We will make use of the following well-known result.

\begin{Definition}
	Let $N$ be a connected manifold. We say that a discrete Lie group $\Gamma$ acts on $N$ 
	\begin{enumerate}
	\item smoothly if $\sigma: N\mapsto N$ is smooth for all $\sigma\in\Gamma$,
	\item properly if, for any compact subset $K\subset N$, the set $\{\sigma\in\Gamma\ |\ (\sigma K)\cap K\neq\emptyset\}$ is compact,
	\item freely if the only element in $\Gamma$ that fixes all $N$ is the identity. 
	\end{enumerate}
\end{Definition}

\begin{Proposition}
	\cite[Thm. 21.13]{leesmooth} Let $N$ be a connected manifold. Let $\Gamma$ be a discrete Lie group acting on $N$ smoothly, properly and freely. Then, the orbit space $N/\Gamma$ is a topological manifold and has a unique smooth structure such that the projection $p:N\to N/\Gamma$ is a submersion.
\end{Proposition}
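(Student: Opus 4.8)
The plan is to verify, by hand, the standard \emph{quotient manifold theorem} via the classical route through properly discontinuous actions. First I would isolate the two consequences of the action being proper and free that carry the argument. Since $\Gamma$ is discrete, a compact subset of $\Gamma$ is finite, so properness says that for every compact $K\subset N$ the set $\{\sigma\in\Gamma : \sigma K\cap K\neq\emptyset\}$ is finite. From this, freeness, and the Hausdorff property of $N$, one deduces the \textbf{covering-space property}: every $x\in N$ has an open neighbourhood $U$ with $\sigma U\cap U=\emptyset$ for all $\sigma\in\Gamma\setminus\{e\}$. Indeed, choose a compact neighbourhood $\overline{U_0}$ of $x$; only finitely many $\sigma_1,\dots,\sigma_k\neq e$ satisfy $\sigma_i\overline{U_0}\cap\overline{U_0}\neq\emptyset$, and since $\sigma_i x\neq x$ one shrinks $\overline{U_0}$ finitely many times, separating $x$ from each $\sigma_i x$, to remove all nontrivial overlaps.

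Next I would draw the topological conclusions. The quotient map $p:N\to N/\Gamma$ is open, because $p^{-1}(p(V))=\bigcup_{\sigma\in\Gamma}\sigma V$ for open $V$; hence $N/\Gamma$ is second countable as the image of a countable basis under an open continuous map, using that a discrete Lie group is second countable, hence countable. For the Hausdorff property -- the step that genuinely needs properness -- take distinct orbits $\Gamma x\neq\Gamma y$ and a compact neighbourhood $L$ of $\{x\}\cup\{y\}$; only finitely many $\sigma$ move $L$ to meet $L$, and each such $\sigma$ satisfies $\sigma x\neq y$, so after shrinking $L$ finitely often we obtain a neighbourhood whose $\Gamma$-orbit separates $x$ from $y$, and pushing down by the open map $p$ yields disjoint open sets around $[x]$ and $[y]$. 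Finally, $N/\Gamma$ is locally Euclidean: for $[x]$, pick $U\ni x$ with the covering-space property and contained in a chart domain of $N$; then $p|_U:U\to p(U)$ is a continuous open bijection, hence a homeomorphism onto an open set, and composing its inverse with the chart of $N$ gives a chart of $N/\Gamma$.

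It remains to upgrade these charts to a smooth atlas and to prove uniqueness. Given two such charts from $U$ and $U'$ with overlapping images, a point of the overlap is $p(q)=p(q')$ with $q\in U$ and $q'\in U'$, whence $q'=\sigma q$ for a \emph{unique} $\sigma\in\Gamma$ by freeness, and near $[q]$ the transition map reads $\psi'\circ\sigma\circ\psi^{-1}$ with $\psi,\psi'$ charts of $N$ and $\sigma:N\to N$ a diffeomorphism by smoothness of the action -- hence smooth. In these charts $p$ is the identity read-off, so $p$ is a local diffeomorphism, in particular a surjective submersion. For uniqueness, if $N/\Gamma$ carried another smooth structure for which $p$ is a submersion, then $p$ admits local smooth sections; composing such a section with the smooth map $p$ exhibits the identity between the two structures as smooth, and symmetry gives a diffeomorphism, so the structures coincide. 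The main obstacle is the Hausdorff step: extracting the \emph{local} separation of two orbits from the purely global finiteness of $\{\sigma:\sigma K\cap K\neq\emptyset\}$, which forces a compactness-plus-finite-shrinking argument and is exactly where freeness (ensuring $\sigma x\neq x$ and $\sigma x\neq y$) is indispensable.
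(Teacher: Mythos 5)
Your proposal is correct, but note that the paper offers no proof of this proposition at all: it is quoted from Lee's \emph{Introduction to Smooth Manifolds} (Thm.~21.13), where it is a special case of the quotient manifold theorem for smooth, free and proper actions of arbitrary Lie groups, proved there with considerably heavier machinery (adapted/slice charts for the orbit map). What you give instead is the classical elementary argument special to discrete $\Gamma$: properness plus discreteness yields finiteness of $\{\sigma\ :\ \sigma K\cap K\neq\emptyset\}$, hence the covering-space property; openness of $p$ gives second countability and, together with the finite-shrinking argument, the Hausdorff property; $p|_U$ provides charts, the unique $\sigma$ relating two sheets makes the transition maps smooth, and uniqueness follows from local sections of a submersion. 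This route is simpler than the general theorem and buys strictly more in the case at hand: it shows $p$ is a local diffeomorphism, i.e.\ a smooth covering map, which is precisely how the quotient is used later in the paper, where $r:ST\S^2\to L(2c,1)$ is treated as a surjective local diffeomorphism.

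Two small corrections to your commentary, neither of which breaks the argument. First, in the Hausdorff step the inequality $\sigma x\neq y$ follows from the orbits $\Gamma x$ and $\Gamma y$ being distinct, not from freeness; properness alone already makes the quotient Hausdorff, and freeness is genuinely needed only for the covering-space property ($\sigma x\neq x$), for the injectivity of $p|_U$, and for the uniqueness of the transition element $\sigma$. Second, you use freeness in the standard sense (no non-identity element fixes \emph{any} point), which is what the theorem requires and what the paper's $\Z_c$-action on $ST\S^2$ satisfies; be aware that the paper's stated definition of ``freely'' literally only excludes elements fixing \emph{all} of $N$ (i.e.\ effectiveness), under which the conclusion would fail, so your stronger reading is the correct one. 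Finally, in the covering-space and Hausdorff shrinking steps it is worth making explicit that one intersects with $\sigma_i^{-1}W_i$ (not merely with a set separating $x$ from $\sigma_i x$) so that $\sigma_i U\subset W_i$; this is implicit in your sketch and standard.
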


It is known that $\Z_c $ is a discrete Lie group, see \cite[Ex. 7.3 m)]{leesmooth}, and it is clear from Equation \eqref{ActionZc} that the defined action is smooth. Also, since $\Z_c$ is finite, its action on $ST\S^2$ is proper. Finally, it also follows from Equation \eqref{ActionZc} that the action is free. Thus, the quotient space $ST\S^2/\Z_c$ is a smooth manifold.

\section{A Quaternionic Approach to the Hopf Fibration and $ST\S^2$}
\label{Quaternionic}
We present now some results on the relation between the three-sphere $\S^3$ and $ST\S^2$ that will be useful for the rest of our discussion. It is well known that $\S^2$ is a double cover of $ST\S^2$. Here, we show how this covering relates to the Hopf fibration. We believe the clearest way to get to the needed results is via the use of the real division algebra of quaternions, which we denote by $\H$. In particular, we show that there exists a Hopf fibration for every unit-length pure imaginary quaternion.

Let $\V$ be the vector space of pure imaginary quaternions. Note that $\V$ can be canonically identified with $\R^3$ via the isomorphism $$ai+bj+ck\mapsto(a,b,c).$$

This identification provides $\V$ with a cross product induced by the cross product on $\R^3$, given by
\[
u\times v = \frac{uv-vu}{2}
\]
for all $u,v\in\V$. 

Let $^*:\H\to\H$ be the conjugation on $\H$. This operation allows us to define a norm on $\H$ given by $|q|^2 = qq^*$. The restriction of such norm on $\V$ induces, via the polarisation identity, an inner product on $\V$ given by
\[
\langle u,v\rangle = -\frac{uv+vu}{2}.
\]
Note that this is precisely the inner product induced by the Euclidean product in $\R^3$.

One can also identify $\S^3\cong S\H := \{q\in\H\ |\ |q| = 1\}$ and $\S^2\cong S\V : = \{u\in\V\ |\ \langle u,u\rangle = -u^2 = 1\}$. Finally, one has $ST\S^2\cong ST(S\V):=\{(u,v)\in S\V\times S\V\ |\ \langle u,v\rangle = 0\}$.

\begin{Proposition}
	\label{Difeoprimer}
	Let $u,v\in S\V$ be such that $\langle u,v\rangle= 0$. Then, there exists a surjective local diffeomorphism
	\[
	\begin{array}{cccc}
		\Phi_{(u,v)}:&S\H&\to&ST(S\V)\\
		& q&\mapsto&(quq^{-1},qvq^{-1})
	\end{array}
	\]
	such that the preimage of a point in $ST(S\V)$ consists of exactly two antipodal points of $S\H$.
\end{Proposition}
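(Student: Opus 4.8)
The plan is to verify four things in turn: that $\Phi_{(u,v)}$ is well defined and smooth, that its fibres are pairs of antipodal points, that it is surjective, and that it is a local diffeomorphism. The whole argument rests on the classical fact that conjugation by a unit quaternion is a rotation: for $q\in S\H$ the map $\rho_q\colon x\mapsto qxq^{-1}$ preserves $\V$, preserves the norm, and hence, under $\V\cong\R^3$, defines an element of $\mathrm{SO}(3)$; moreover $q\mapsto\rho_q$ is the universal double cover $S\H\to\mathrm{SO}(3)$, a surjective group homomorphism with kernel $\{\pm1\}$. Granting this, well-definedness is immediate: $quq^{-1},qvq^{-1}\in S\V$ and $\langle quq^{-1},qvq^{-1}\rangle=\langle u,v\rangle=0$, so $\Phi_{(u,v)}(q)\in ST(S\V)$; smoothness holds since the expression is polynomial in $q$ and $q^{-1}=q^*$.

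For the fibres, note that $\Phi_{(u,v)}(q)=\Phi_{(u,v)}(q')$ means precisely that $\rho_{q'^{-1}q}$ fixes both $u$ and $v$. Since $\langle u,v\rangle=0$ and $|u|=|v|=1$, the vectors $u,v$ are linearly independent, so any rotation of $\R^3$ fixing both fixes the plane they span pointwise and is therefore the identity; hence $q'^{-1}q\in\{\pm1\}$, i.e. $q'=\pm q$, and the reverse inclusion is obvious. Surjectivity is equally short: given $(u',v')\in ST(S\V)$, the triples $(u,v,u\times v)$ and $(u',v',u'\times v')$ are both positively oriented orthonormal bases of $\V\cong\R^3$, so there is $R\in\mathrm{SO}(3)$ with $Ru=u'$ and $Rv=v'$; choosing $q$ with $\rho_q=R$ gives $\Phi_{(u,v)}(q)=(u',v')$.

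For the local diffeomorphism claim I would compute the differential. Using $T_qS\H=q\V$, write a tangent vector as $q\xi$ with $\xi\in\V$; differentiating $t\mapsto\rho_{q(t)}(u)$ along a curve with $q(0)=q$ and $\dot q(0)=q\xi$, and using $\xi u-u\xi=2(\xi\times u)$, gives
\[
d(\Phi_{(u,v)})_q(q\xi)=\big(2\,q(\xi\times u)q^{-1},\ 2\,q(\xi\times v)q^{-1}\big).
\]
If this vanishes then $\xi\times u=\xi\times v=0$, so $\xi$ is parallel to both $u$ and $v$; linear independence forces $\xi=0$, so $d(\Phi_{(u,v)})_q$ is injective. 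Since $\dim S\H=\dim ST(S\V)=3$ it is an isomorphism, and the inverse function theorem finishes the proof. The only genuinely fiddly points — and the main places where care is needed rather than ideas — are checking that the displayed target vectors really lie in $T_{\Phi_{(u,v)}(q)}ST(S\V)$ (the constraint $\langle u',v'\rangle=0$ differentiates to zero thanks to the antisymmetry of the scalar triple product) and keeping the identification $q^{-1}=q^*$ on $S\H$ consistent; no step is conceptually difficult.
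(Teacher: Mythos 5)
Your proof is correct, but it takes a genuinely different route from the one in the paper. You outsource the heavy lifting to the classical double cover $S\H\to\mathrm{SO}(3)$, $q\mapsto\rho_q$: well-definedness, surjectivity (via transitivity of $\mathrm{SO}(3)$ on positively oriented orthonormal frames) and the identification of the fibres with $\{\pm q\}$ (via the kernel $\{\pm1\}$ and the fact that a rotation fixing two linearly independent vectors is the identity) all follow from quoting this fact, and you then establish the local diffeomorphism property honestly by computing $d(\Phi_{(u,v)})_q(q\xi)=\big(2q(\xi\times u)q^{-1},\,2q(\xi\times v)q^{-1}\big)$ on $T_qS\H=q\V$, checking injectivity and invoking the inverse function theorem. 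The paper, by contrast, keeps everything self-contained and computational: well-definedness is verified by direct quaternionic identities, surjectivity is proved constructively by exhibiting an explicit preimage $q=q_2q_1$ built from a Rodrigues-type rotation formula (stated as a technical lemma), the fibre statement is argued by writing $q=e^{\theta w}$, $p=e^{\tau z}$ and comparing, and smoothness of the map and of the locally constructed inverse is only sketched via charts. What your approach buys is a cleaner and more rigorous treatment of injectivity of the fibres and of the local diffeomorphism claim (the paper's argument for both is the weakest part of its proof), at the price of relying on the standard covering homomorphism rather than proving it; what the paper's approach buys is explicit formulas for preimages, which serve as concrete local inverses and are reused in later computations (e.g.\ in the proof that $\tau_w$ is well defined and surjective). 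Both arguments are complete for the statement as given, modulo the facts each chooses to cite.
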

\begin{proof}
	Let us first show that $\Phi_{(u,v)}$ is well defined. We can compute 
	\[
	\langle quq^{-1}, quq^{-1}\rangle = -(quq^{-1})(quq^{-1}) = -qu^2q^{-1} = qq^{-1} = 1,
	\]
	and similarly for $\langle qvq^{-1}, qvq^{-1}\rangle  =1$. In addition, twice the real part of $quq^{-1}$ is 
	\[
	quq^{-1}+(quq^{-1})^* = quq^{-1}-(q^{-1})^*uq^* = quq^{-1}-quq^{-1} = 0,
	\]
	and similarly for $qvq^{-1}$.
	
	Also, 
	\[\langle quq^{-1},qvq^{-1}\rangle = -\frac{quq^{-1}qvq^{-1}+qvq^{-1}quq^{-1}}{2} = -q\frac{uv+vu}{2}q^{-1} = 0,
	\]
	and thus the map is well defined.
	
	Let us now show surjectivity. Let $(w,z)\in ST(S\V)$. Assume that $u$ and $w$ are not colinear and let $\theta = \arccos\langle u, w\rangle\in (0,\pi)$. Let $q_1 = e^{\frac{\theta}{2}\frac{u\times w}{|u\times w|}} = \cos\frac{\theta}{2}+\frac{u\times w}{|\sin\theta|}\sin\frac{\theta}{2}$. Assume now that $q_1vq_1^{-1}$ and $z$ are not colinear and let $\tau = \arccos\langle q_1vq_1^{-1},z\rangle\in (0,\pi)$. Let $q_2 = e^{\frac{\tau}{2}\frac{q_1vq_1^{-1}\times z}{|q_1vq_1^{-1}\times z|}} = \cos\frac{\tau}{2}+\frac{q_1vq_1^{-1}\times z}{|\sin\tau|}\sin\frac{\tau}{2}$. Let $q = q_2q_1\in S\H$. We will now make use of the following technical lemma. 
	
	\begin{Lemma}
		Let $z\in S\V$ and $s = \cos\frac{\alpha}{2}+\sin\frac{\alpha}{2}z$. Then, for all $r\in S\V$, 
		\[
		srs^{-1} = r\cos\alpha+(z\times r)\sin\alpha+z\langle z, r\rangle(1-\cos\alpha).
		\]
	\end{Lemma}
	
	Hence, we can now compute
	\begin{align*}
		quq^{-1} = q_2(q_1uq_1^{-1})q_2^{-1} &= q_2\Big(u\cos\theta+\Big((u\times w)\times u\Big)\frac{\sin\theta}{|\sin\theta|}+(u\times w)\langle u\times w,u\rangle\frac{1-\cos\theta}{\sin^2\theta}\Big)q_2^{-1}  \\ &=q_2\Big(u\langle u,w\rangle + w-u\langle u,w\rangle\Big)q_2^{-1} = q_2wq_2^{-1} \\ & = w\cos\tau+\Big((q_1vq_1^{-1})\times z\Big)\times w+(q_1vq_1^{-1}\times z)\langle q_1vq_1^{-1}\times z, w\rangle \frac{1-\cos\tau}{\sin^2\tau} \\ & =w\cos\tau+(q_1vq_1^{-1}\times z)\langle q_1vq_1^{-1}\times z, w\rangle \frac{1-\cos\tau}{\sin^2\tau} = w\cos\tau+w(1-\cos\tau) = w,
	\end{align*}
	and
	\begin{align*}
		qvq^{-1} = q_2(q_1vq_1^{-1})q_2^{-1} & = q_1vq_1^{-1}\cos\tau +(q_1vq_1^{-1}\times z)\times q_1vq_1^{-1}+\frac{q_1vq_1^{-1}\times z}{\sin\tau^2}\langle q_1vq_1^{-1}\times z,q_1vq_1^{-1}\rangle(1-\cos\tau)  \\& = q_1vq_1^{-1}\cos\tau +(q_1vq_1^{-1}\times z)\times q_1vq_1^{-1} = q_1vq_1^{-1}\cos\tau+z-q_1vq_1^{-1}\cos\tau = z.
	\end{align*}
	
	If, after conjugating by $q_1$, one finds that $q_1vq_1^{-1}$ and $z$ are colinear, then $q_1vq_1^{-1} = z$ or $q_1vq_1^{-1} = -z$. For the first case, take $q_2 = 1$. For the second case, take $q_2 = e^{\frac{\pi}{2}w}$. If, from the beginning, $w$ and $u$ are colinear, take $q_1 = 1$ if $w = u$ and $q_1 = e^{\frac{\pi}{2}z}$ if $w = -u$. Then, find $q_2$ as showed. Thus, the mapping is onto.

	Let us now compute $\Phi_{(u,v)}(-q) = \Big((-q)u(-q)^{-1}, (-q)v(-q)^{-1}\Big) = (quq^{-1},qvq^{-1}) = \Phi(q)$.
	
	Let now $q,p\in S\H$. Let then $q = e^{\theta w}$ and $p = e^{\tau z}$, with $\theta,\tau \in[0,2\pi)$ and $w,z\in\V$. Then, using the previous technical claim, it is not hard to see that $\Phi_{(u,v)}(q) = \Phi_{(u,v)}(p)$ implies that either $\theta w= \tau z$ or $\theta w = (\pi+\tau)z$, which concludes the proof.
	
	Differentiability of the mapping and its local inverse (constructed as showed in this proof) follow from taking charts in $S\H\subseteq\H\cong\R^4$ and $ST(S\V)\subseteq S\V\times S\V\subseteq \V\times\V\cong\R^3\times\R^3$.
\end{proof}

It is clear now how any pair $(u,v)\in ST(S\V)$ provides a diffeomorphism from $S\H/\Z_2$ onto $ST(S\V)$.
\begin{Corollary}
	Let $\Z_2$ act on $S\H$ via the antipodal map. Then, for all $u,v\in S\V$ such that $\langle u,v\rangle = 0$, there exists a diffeomorphism $S\H/\Z_2\cong ST(S\V)$.
	Hence, for all $u,v\in \S^3$ such that $\langle u,v\rangle = 0$. there exists a diffeomorphism $\S^3/\Z_2\cong ST\S^2$. We will denote both of these maps by $\Phi_{(u,v)}$ when the context makes clear which of them is being used.
\end{Corollary}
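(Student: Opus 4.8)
The plan is to obtain the stated diffeomorphism by passing the map $\Phi_{(u,v)}$ of Proposition \ref{Difeoprimer} to the quotient by the antipodal $\Z_2$-action on $S\H$. First I would invoke the identity $\Phi_{(u,v)}(-q) = \Phi_{(u,v)}(q)$ established within the proof of Proposition \ref{Difeoprimer}: it says precisely that $\Phi_{(u,v)}$ is constant on the orbits of the antipodal action. Writing $\rho: S\H \to S\H/\Z_2$ for the quotient projection, the universal property of the quotient then yields a unique continuous map $\overline{\Phi}_{(u,v)}: S\H/\Z_2 \to ST(S\V)$ with $\overline{\Phi}_{(u,v)}\circ\rho = \Phi_{(u,v)}$.

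Next I would check that $\overline{\Phi}_{(u,v)}$ is a bijection. Surjectivity is inherited from the surjectivity of $\Phi_{(u,v)}$. For injectivity, Proposition \ref{Difeoprimer} guarantees that the $\Phi_{(u,v)}$-preimage of each point of $ST(S\V)$ is exactly one pair of antipodal points of $S\H$, that is, exactly one $\Z_2$-orbit; hence two points of $S\H/\Z_2$ with a common image under $\overline{\Phi}_{(u,v)}$ must be equal.

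It then remains to upgrade this bijection to a diffeomorphism. The antipodal action of $\Z_2$ on $S\H\cong\S^3$ is smooth, free, and (being the action of a finite group) proper, so by the quotient theorem quoted earlier (see \cite[Thm. 21.13]{leesmooth}) the orbit space $S\H/\Z_2$ carries a unique smooth structure for which $\rho$ is a submersion; since $\Z_2$ is discrete, $\rho$ is in fact a local diffeomorphism (indeed a double cover). As $\Phi_{(u,v)} = \overline{\Phi}_{(u,v)}\circ\rho$ is a local diffeomorphism by Proposition \ref{Difeoprimer} and $\rho$ is a local diffeomorphism, it follows that $\overline{\Phi}_{(u,v)}$ is a local diffeomorphism; a bijective local diffeomorphism is a diffeomorphism. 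Finally, transporting everything through the canonical identifications $\H\cong\R^4$, $\S^3\cong S\H$, $\S^2\cong S\V$, and $ST\S^2\cong ST(S\V)$ recalled before Proposition \ref{Difeoprimer} gives the asserted diffeomorphism $\S^3/\Z_2\cong ST\S^2$, and we keep the name $\Phi_{(u,v)}$ for it. The only step requiring genuine attention is the local-diffeomorphism claim for $\overline{\Phi}_{(u,v)}$, and even that is just the composition of two facts already in hand, so I do not expect a real obstacle; the content of the corollary is essentially packaged inside Proposition \ref{Difeoprimer}.
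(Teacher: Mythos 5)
Your proposal is correct and follows essentially the same route as the paper: the corollary is deduced from Proposition \ref{Difeoprimer} by noting that $\Phi_{(u,v)}$ is a surjective local diffeomorphism whose fibres are exactly the antipodal pairs, so it descends to a bijective local diffeomorphism, hence a diffeomorphism, on $S\H/\Z_2$. You simply make explicit the quotient-manifold and universal-property details that the paper leaves implicit, which is fine.
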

\begin{proof}
	The result follows from the fact that $\Phi_{(u,v)}$ is a local diffeomorphism and that the preimage of a point in $S\V$ is precisely a pair of antipodal points. The differentiability of such map and its inverse follows from the differentiability of $\Phi_{(u,v)}$ and its local inverse.
\end{proof}

Note that choosing orthogonal vectors $u, v\in ST(S\V)$ is simply choosing the image of $1\in S\H$. Once this image is fixed, the diffeomorphism is completely determined. Equivalently, in terms of $\S^3\cong S\H $ and $ ST\S^2\cong ST(S\V)$, it is enough to pick $v\in T_u\S^2\subset T\S^2$ as the image of the north pole of $\S^3$ (or any other point of the three-sphere as a matter of fact) to completely determine the diffeomorphism between $\S^3/\Z_2\cong \R P^3$ and $ST\S^2$.

As seen at the end of Section \ref{UTB}, $ST\S^2$ is an $\S^1$-bundle over $\S^2$ via the obvious projection 
\[
\begin{array}{cccc}
	\pi: & ST\S^2&\to & \S^2\\
	&(x,u)&\mapsto& x.
\end{array}
\]

However, there is a more interesting projection
\[
\begin{array}{cccc}
	f:&ST\S^2&\to &\S^2\\
	&(x,u)&\mapsto&x\times u.
\end{array}
\]

Hence, $ST(S\V)$ can be given a structure of $\S^1$-bundle over $S\V$ via
\[
\begin{array}{cccc}
	f:&ST(S\V)&\to &S\V\\
	&(u,v)&\mapsto&u\times v.
\end{array}
\]

Let us now show how these fibres look like.

\begin{Lemma}
	Let $q\in S\V$. Then, the fibre of $q$ under $f:ST(S\V)\to S\V$ is $$\{(e^{-q\theta}ue^{q\theta}, e^{-q\theta}ve^{q\theta})\in ST(S\V)\ |\ \theta\in \R\}$$ for any $(u,v)\in f^{-1}(q)$.
\end{Lemma}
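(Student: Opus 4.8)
The plan is to establish the two inclusions between the fibre $f^{-1}(q)$ and the set $O_{(u,v)} := \{(e^{-q\theta}ue^{q\theta},\, e^{-q\theta}ve^{q\theta}) : \theta\in\R\}$ for a fixed $(u,v)\in f^{-1}(q)$, exploiting the fact that conjugation by a unit quaternion acts on $\V\cong\R^3$ as a rotation. Note first that $q^2 = -1$, so $e^{q\theta} = \cos\theta + q\sin\theta\in S\H$ with inverse $e^{-q\theta}$, and that $q$ commutes with $e^{q\theta}$. As recorded in the proof of Proposition \ref{Difeoprimer}, conjugation $c_\theta\colon r\mapsto e^{-q\theta}re^{q\theta}$ preserves the real part, the norm, the inner product $\langle\,,\rangle$, and the cross product on $\V$. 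Hence, for $(u,v)\in f^{-1}(q)$, the pair $(c_\theta(u),c_\theta(v))$ again lies in $ST(S\V)$, and $f(c_\theta(u),c_\theta(v)) = c_\theta(u\times v) = c_\theta(q) = e^{-q\theta}qe^{q\theta} = q$, so $O_{(u,v)}\subseteq f^{-1}(q)$.

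For the reverse inclusion, take $(u',v')\in f^{-1}(q)$. Then $(u,v,u\times v) = (u,v,q)$ and $(u',v',u'\times v') = (u',v',q)$ are orthonormal bases of $\V\cong\R^3$, both positively oriented because in each the third vector is the cross product of the first two. Let $R$ be the linear automorphism of $\V$ with $R(u) = u'$, $R(v) = v'$, $R(q) = q$; it carries the first frame to the second, so $R\in\mathrm{O}(3)$, and matching orientations forces $R\in\mathrm{SO}(3)$. Since $R$ fixes the unit vector $q$, it is a rotation about the axis $\R q$. By the technical lemma used in the proof of Proposition \ref{Difeoprimer} --- conjugation by $\cos(\alpha/2)+\sin(\alpha/2)z$ is the rotation of angle $\alpha$ about $z$ --- every rotation about $\R q$ equals $c_\theta$ for a suitable $\theta\in\R$; hence $R = c_\theta$ and $(u',v') = (c_\theta(u),c_\theta(v))$, i.e.\ $(u',v')\in O_{(u,v)}$.

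Most of the work is routine and is already available in the proof of Proposition \ref{Difeoprimer}: that $f$ takes values in $S\V$ (equivalently $u\times v = uv$ when $u\perp v$, which has norm one), the invariance of the relevant structures under conjugation, and the identities for $e^{\pm q\theta}$. The only genuinely delicate point is the surjectivity hidden in the reverse inclusion: one must know that \emph{every} rotation of $\R^3$ fixing the axis $\R q$ is realised as a conjugation $c_\theta$, which is precisely what the explicit rotation formula of the technical lemma provides; the accompanying orientation check, ensuring $R\in\mathrm{SO}(3)$ rather than merely $\mathrm{O}(3)$, is the other small subtlety to be careful about.
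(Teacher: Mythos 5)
Your proof is correct, and it does more than the paper's own proof does. The argument in the paper consists solely of the computation you give for the first inclusion: it checks that $f(e^{-q\theta}ue^{q\theta}, e^{-q\theta}ve^{q\theta}) = e^{-q\theta}(u\times v)e^{q\theta} = q$, i.e.\ that the conjugated pairs lie in the fibre, and stops there; the reverse inclusion (that \emph{every} point of $f^{-1}(q)$ arises this way) is left implicit, as is the verification that the conjugated pairs lie in $ST(S\V)$ at all, which in the paper is tacitly borrowed from the computations in Proposition \ref{Difeoprimer}. Your second paragraph supplies exactly the missing half: since $(u,v,q)$ and $(u',v',q)$ are positively oriented orthonormal frames of $\V\cong\R^3$, the map fixing $q$ and sending one pair to the other lies in $\mathrm{SO}(3)$, is a rotation about $\R q$, and by the explicit rotation formula of the technical lemma every such rotation is a conjugation $c_\theta$ (with $e^{-q\theta} = \cos\theta + \sin\theta\,(-q)$, so the angle is $2\theta$ about $-q$, which sweeps out all rotations about $\R q$ as $\theta$ ranges over $\R$). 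The orientation check you flag is indeed the point that makes this work, since an orthogonal map fixing $q$ and merely in $\mathrm{O}(3)$ could be a reflection, which is not realised by conjugation. So the comparison is: the paper's proof is a one-line verification of containment, relying on the reader to accept (or reconstruct) surjectivity onto the fibre, whereas your argument proves the stated equality of sets in full, at the modest cost of invoking the frame/rotation dictionary between unit quaternions and $\mathrm{SO}(3)$ that the paper has already set up.
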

\begin{proof}
	Let us compute
	\begin{align*}
		f(e^{-q\theta}ue^{q\theta}, e^{-q\theta}ve^{q\theta}) &= (e^{-q\theta}ue^{q\theta})\times(e^{-q\theta}ve^{q\theta}) \\ & =  \frac{1}{2}\Big[e^{-q\theta}uve^{q\theta}-e^{-q\theta}vue^{q\theta}\Big]  \\&= e^{-q\theta}\frac{uv-vu}{2}e^{q\theta}  \\& = e^{-q\theta}(u\times v)e^{q\theta}\\ & = e^{-q\theta}qe^{q\theta} = q = u\times v.
	\end{align*}
\end{proof}

This lemma provides any fibre of $f: ST(S\V)\to S\V$ with a closed operation $\pi^{-1}(p)\times\pi^{-1}(p)\to\pi^{-1}(p)$ for all $p\in S\V$ via the identification $\pi^{-1}(p)\cong\S^1$.  

The next result shows that, for all $w\in S\V$, we can obtain a Hopf fibration of  $S\H\cong\S^3$ over $S\V\cong\S^2$.

\begin{Proposition}
	Let $w\in S\V$. Then, the projection
	\[
	\begin{array}{cccc}
		\tau_w:&S\H&\to&S\V\\
		& q&\mapsto&qwq^{-1}
	\end{array}
	\]
	provides $S\H$ with a structure of $\S^1$-bundle over $S\V$. The fibre of $p\in S\V$ is $\{qe^{w\theta}\ |\ \theta\in\R\}$, for $q\in\tau_w^{-1}(p)$.
\end{Proposition}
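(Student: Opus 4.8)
The plan is to recognise $\tau_w$ as the orbit map of a smooth transitive action of a compact Lie group on $S\V$ and then invoke the homogeneous-space characterisation theorem. Concretely, $S\H$ is a Lie group under quaternion multiplication, and it acts on $\V$ by conjugation, $q\cdot r = qrq^{-1}$; this is exactly the action underlying Proposition \ref{Difeoprimer}. Moreover, if $(u,v)$ is an orthonormal pair with $u\times v = w$ — which exists for any $w\in S\V$: complete $w$ to an oriented orthonormal basis $(u,v,w)$ of $\V$ — then the same computation as in the proof of Proposition \ref{Difeoprimer} gives $(quq^{-1})\times(qvq^{-1}) = q(u\times v)q^{-1}$, so that $\tau_w = f\circ\Phi_{(u,v)}$. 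In particular $\tau_w$ is smooth, and since both $f$ and $\Phi_{(u,v)}$ are surjective, so is $\tau_w$; equivalently, the conjugation action of $S\H$ on $S\V$ is transitive (which is a restatement of the surjectivity established in Proposition \ref{Difeoprimer}). That $qwq^{-1}\in S\V$ for $q\in S\H$, so that $\tau_w$ is well defined at all, follows from the two computations in the proof of Proposition \ref{Difeoprimer}: conjugation by a unit quaternion kills the real part and preserves the norm.

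Next I would identify the stabiliser of $w$ under this action. Writing $q = a + r$ with $a\in\R$ and $r\in\V$, the equation $qwq^{-1} = w$ is equivalent to $qw = wq$, i.e. to $rw = wr$, i.e. to $2(r\times w) = rw - wr = 0$; since $r,w\in\V\cong\R^3$ this forces $r = tw$ for some $t\in\R$. The unit condition then reads $|a + tw|^2 = a^2 + t^2 = 1$, so the stabiliser is
\[
H = \{\cos\theta + w\sin\theta\ |\ \theta\in\R\} = \{e^{w\theta}\ |\ \theta\in\R\}\cong\S^1,
\]
a closed (hence compact) subgroup of $S\H$.

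Finally, by the homogeneous-space characterisation theorem \cite{leesmooth}, a smooth transitive action of a Lie group $G$ on a manifold $X$ with stabiliser $H$ at a point makes the orbit map factor through a $G$-equivariant diffeomorphism $G/H\cong X$, and $G\to G/H$ is a principal $H$-bundle. Applied with $G = S\H$, $X = S\V$ and $H = \{e^{w\theta}\}$, this shows that $\tau_w$ is, up to the diffeomorphism $S\H/H\cong S\V$, the projection $S\H\to S\H/H$, hence a principal $\S^1$-bundle and in particular an $\S^1$-bundle. Its fibre over $p\in S\V$ is the coset $q_0H = \{q_0e^{w\theta}\ |\ \theta\in\R\}$ for any $q_0\in\tau_w^{-1}(p)$, which is the asserted description. (For $w=k$ one recovers a version of the classical Hopf fibration $\S^3\to\S^2$.)

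The only step with genuine content is pinning down the stabiliser as \emph{precisely} the circle $\{e^{w\theta}\}$, together with transitivity; both are pure quaternion algebra already developed here — the centraliser of a pure unit quaternion in $\H$ is the plane it spans with $\R$, and transitivity is the surjectivity of Proposition \ref{Difeoprimer}. If one wishes to avoid the homogeneous-space machinery, local triviality can instead be bootstrapped from $\tau_w = f\circ\Phi_{(u,v)}$: over a small ball $W\subset S\V$ one has $f^{-1}(W)\cong W\times\S^1$, and $\tau_w^{-1}(W) = \Phi_{(u,v)}^{-1}(f^{-1}(W))$ is a double cover of $W\times\S^1$ — necessarily the connected one, because the circle $\{q_0e^{w\theta}\ |\ \theta\in[0,2\pi)\}$ double-covers the $f$-fibre $f^{-1}(p)$ — hence again $\cong W\times\S^1$ compatibly with the projections, so $\tau_w$ is a locally trivial $\S^1$-bundle.
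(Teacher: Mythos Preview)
Your proof is correct and in fact more thorough than the paper's own argument. The paper proceeds by direct computation: it refers back to the proof of Proposition~\ref{Difeoprimer} for well-definedness and surjectivity of $\tau_w$, and then simply verifies that $\tau_w(qe^{w\theta}) = \tau_w(q)$ using the commutation $we^{w\theta} = e^{w\theta}w$. It does not explicitly establish the reverse inclusion (that \emph{every} element of the fibre over $p$ has the form $qe^{w\theta}$), nor does it address local triviality of the bundle.

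Your approach, by contrast, recasts $\tau_w$ as the orbit map of the transitive conjugation action of the compact Lie group $S\H$ on $S\V$, computes the stabiliser of $w$ exactly via the centraliser argument $rw = wr \iff r\times w = 0 \iff r\in\lspan w\rspan$, and then invokes the homogeneous-space theorem to obtain the principal $\S^1$-bundle structure in one stroke. This buys you both the precise fibre description and local triviality for free, and places the construction in its natural Lie-theoretic setting. Your factorisation $\tau_w = f\circ\Phi_{(u,v)}$ is also apt, as it anticipates the content of Proposition~\ref{propcomm} and ties the argument directly to the surrounding material. The paper's route is shorter but leaves more to the reader; yours is the more rigorous justification of the full statement as written.
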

\begin{proof}
	The proof of Proposition \ref{Difeoprimer} shows that $\tau_w$ is well defined and surjective.
	
	Let $q\in S\H$ and $\theta\in\R$. Then, \[
	\tau_w(qe^{w\theta}) = qe^{w\theta}w(qe^{w\theta})^{-1} = qe^{w\theta}w(e^{w\theta})^{-1}q^{-1} = qwe^{w\theta}(e^{w\theta})^{-1}q^{-1} = qwq^{-1} = \tau_w(q)
	\]
\end{proof}

Similarly to the fibres of $\pi: ST(S\V)\to S\V$, this last result provides any fibre $\tau_w^{-1}(p)$ with a closed operation $\tau_w^{-1}(p)\times \tau_w^{-1}(p)\to\tau_w^{-1}(p)$ via the identification $\tau_w^{-1}(p)\cong \S^1$ for all $p\in S\V$.

\begin{Lemma}
	The $\Z_2$-action on $S\H$ given by the antipodal map preserves the fibres of $\tau_w: S\H\to S\V$. Hence, for all $w\in S\V$, the map
	\[
	\begin{array}{cccc}
		\tilde{\tau}_w:&S\H/\Z_2&\to&S\V\\
		&[q]&\mapsto&\tau_w(q)
	\end{array}
	\]
	provides $S\H/\Z_2$ with a structure of $\S^1/\Z_2\cong\S^1$-bundle over $S\V$. Making an abuse of notation, we will continue denoting $\tilde{\tau}_w = \tau_w$ when there is no possible confusion.
\end{Lemma}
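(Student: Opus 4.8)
The plan is to dispatch the two assertions in turn: first that the antipodal map descends to a well-defined smooth map $\tilde{\tau}_w$, and then that this map is a locally trivial $\S^1$-bundle.

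For the first part I would simply compute $\tau_w(-q)$. Since $(-q)^{-1} = -q^{-1}$, one gets
\[
\tau_w(-q) = (-q)\,w\,(-q)^{-1} = (-q)\,w\,(-q^{-1}) = q\,w\,q^{-1} = \tau_w(q),
\]
so $q$ and $-q$ always lie in the same fibre of $\tau_w$; that is, the antipodal $\Z_2$-action preserves every fibre $\tau_w^{-1}(p)$. Consequently $\tilde{\tau}_w([q]) := \tau_w(q)$ is independent of the chosen representative and hence well defined. For smoothness I would note that the antipodal action on $S\H\cong\S^3$ is free and proper, so by the quotient theorem already cited the projection $\rho\colon S\H\to S\H/\Z_2$ is a smooth covering (in particular a local diffeomorphism) and $S\H/\Z_2$ is a manifold; then $\tilde{\tau}_w\circ\rho=\tau_w$ forces $\tilde{\tau}_w$ to be smooth.

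For the bundle structure, the key is to identify how the antipodal map acts on a single fibre. By the preceding Proposition, the fibre of $p\in S\V$ under $\tau_w$ is $\{q_0e^{w\theta}\ |\ \theta\in\R\}$ for any $q_0\in\tau_w^{-1}(p)$, and since $e^{w\theta}$ has period $2\pi$ this identifies $\tau_w^{-1}(p)\cong\R/2\pi\Z\cong\S^1$. Because $w\in S\V$ satisfies $-w^2=1$, we have $e^{w\pi}=\cos\pi+w\sin\pi=-1$, so
\[
-q_0e^{w\theta} = q_0\,e^{w\pi}\,e^{w\theta} = q_0\,e^{w(\theta+\pi)},
\]
i.e. on each fibre the antipodal map is the rotation $\theta\mapsto\theta+\pi$. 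This is a free involution of $\S^1$ with quotient $\R/\pi\Z\cong\S^1$, so in particular $\tilde{\tau}_w^{-1}(p)=\tau_w^{-1}(p)/\Z_2\cong\S^1$.

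Finally I would establish local triviality. Since $\tau_w$ is a locally trivial $\S^1$-bundle, every $p\in S\V$ has a neighbourhood $U$ admitting a local section $\sigma\colon U\to S\H$ of $\tau_w$, and then $\Psi\colon U\times\S^1\to\tau_w^{-1}(U)$, $(p',\theta)\mapsto\sigma(p')e^{w\theta}$, is a diffeomorphism compatible with the projections to $U$. By the computation above, in these coordinates the antipodal map is $(p',\theta)\mapsto(p',\theta+\pi)$, i.e. it acts only on the second factor, by translation by the order-two element of $\S^1$. Therefore
\[
\tilde{\tau}_w^{-1}(U) = \tau_w^{-1}(U)/\Z_2 \cong (U\times\S^1)/\Z_2 \cong U\times(\S^1/\Z_2) \cong U\times\S^1,
\]
again compatibly with the projections, so $\tilde{\tau}_w$ is a locally trivial fibre bundle with fibre $\S^1$, as claimed. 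I do not expect any serious obstacle here; the only point requiring genuine care — and what I regard as the crux — is the identification of the fibrewise $\Z_2$-action with rotation by $\pi$, since it is precisely this that guarantees both that the fibre quotient $\S^1/\Z_2$ is again a circle and that the trivialisations of $\tau_w$ descend to $S\H/\Z_2$.
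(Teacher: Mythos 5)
Your proposal is correct and its core step is exactly the paper's entire proof: the paper's argument consists solely of the one-line computation $\tau_w(-q) = (-q)w(-q)^{-1} = qwq^{-1} = \tau_w(q)$, leaving the descent to the quotient, the identification of the fibrewise $\Z_2$-action with rotation by $\pi$ via $e^{w\pi}=-1$, and the local triviality of $\tilde{\tau}_w$ implicit in the word ``Hence''. Your additional verifications are accurate and simply make explicit what the paper takes for granted.
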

\begin{proof}
	Trivially, if $q\in S\H$, one has $\tau(-q) = (-q)w(-q)^{- 1} = qwq^{-1} = \tau(q)$.
\end{proof}

We conclude this section with the following result,

\begin{Proposition}
	\label{propcomm}
	Let $u,v\in S\V$ such that $\langle u, v\rangle = 0$ and let $\Phi_{(u,v)}:S\H/\Z_2\to ST(S\V)$ be their induced diffeomorphism. Let $\tau_w:S\H/\Z_2\to S\V$ be the projection induced by $w = u\times v\in S\V$. Then, the following diagram commutes.
	\[
	\begin{tikzcd}
		S\H/\Z_2\arrow{rr}{\Phi_{(u,v)}}\arrow{dr}{\tau_w} && ST(S\V)\arrow{dl}[above]{f}\\
		&S\V&
	\end{tikzcd}
	\]
	In addition, for all $p\in S\V$, $\Phi_{(u,v)}$ induces a diffeomorphism between $\tau_w^{-1}(p)$ and $f^{-1}(p)$.
\end{Proposition}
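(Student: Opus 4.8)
The plan is to verify the commutativity of the triangle by a single quaternionic computation, and then deduce the fibrewise statement formally from the fact that $\Phi_{(u,v)}$ is a diffeomorphism.

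First I would check that $f\circ\Phi_{(u,v)}=\tau_w$. Take $[q]\in S\H/\Z_2$; by definition $f\circ\Phi_{(u,v)}([q])=(quq^{-1})\times(qvq^{-1})$. Using the algebraic description of the cross product on $\V$, namely $a\times b=\tfrac12(ab-ba)$, together with the fact that conjugation $x\mapsto qxq^{-1}$ is an algebra automorphism of $\H$ (hence additive, multiplicative, and fixing real scalar multiples), one obtains
\[
(quq^{-1})\times(qvq^{-1})=\tfrac12\big(q(uv)q^{-1}-q(vu)q^{-1}\big)=q\,\tfrac{uv-vu}{2}\,q^{-1}=q(u\times v)q^{-1}=qwq^{-1}=\tau_w([q]).
\]
This is essentially the computation already carried out in the lemma describing the fibres of $f$, so no new work is needed; one only has to note that $w=u\times v$ is precisely the element defining $\tau_w$, and that $\Phi_{(u,v)}$ is well defined on the $\Z_2$-quotient (Proposition \ref{Difeoprimer}), so the identity descends to $S\H/\Z_2$.

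For the second assertion, fix $p\in S\V$. Commutativity gives $\Phi_{(u,v)}(\tau_w^{-1}(p))\subseteq f^{-1}(p)$; since $\Phi_{(u,v)}$ is surjective and $f\circ\Phi_{(u,v)}=\tau_w$, every point of $f^{-1}(p)$ is hit by some point of $\tau_w^{-1}(p)$, so the restriction is onto, and it is injective because $\Phi_{(u,v)}$ is. Both $\tau_w^{-1}(p)$ and $f^{-1}(p)$ are embedded circles, being fibres of the $\S^1$-bundles $\tau_w$ and $f$; and the restriction of a diffeomorphism to an embedded submanifold is a diffeomorphism onto its image. Hence $\Phi_{(u,v)}|_{\tau_w^{-1}(p)}\colon\tau_w^{-1}(p)\to f^{-1}(p)$ is a diffeomorphism, with inverse the restriction of $\Phi_{(u,v)}^{-1}$.

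There is no real obstacle here: the only point requiring care is combining the cross-product formula with conjugation, which works precisely because conjugation by a unit quaternion is an algebra automorphism; everything else is bookkeeping with the already-established bundle structures and the diffeomorphism $\Phi_{(u,v)}$.
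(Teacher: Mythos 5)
Your proposal is correct and follows essentially the same route as the paper: the same quaternionic computation $(quq^{-1})\times(qvq^{-1}) = q\frac{uv-vu}{2}q^{-1} = qwq^{-1}$ establishes commutativity, and the fibrewise diffeomorphism is deduced from the bijectivity and smoothness of $\Phi_{(u,v)}$ and its inverse. The extra remarks you add (descent to the $\Z_2$-quotient and the embedded-circle structure of the fibres) are harmless elaborations of what the paper leaves implicit.
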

\begin{proof}
	Let $q\in S\H$. Then, 
	\begin{align*}
		f(\Phi_{(u,v)}(q)) =& f(quq^{-1}, qvq^{-1}) = (quq^{-1})\times(qvq^{-1}) = \frac{quq^{-1}qvq^{-1}-qvq^{-1}quq^{-1}}{2}  \\ =& \frac{quvq^{-1}-qvuq^{-1}}{2} = q\frac{uv-vu}{2}q^{-1} = q(u\times v)q^{-1} = \tau_w(q).
	\end{align*}
	
	Since $\Phi_{(u,v)}$ is bijective, for all $p\in S\V$, it induces a bijection $\Phi_{(u,v)}|_{\tau_w^{-1}(p)}:\tau^{-1}(p)\to f^{-1}(p)$. Differentiability of this map and its inverse follow from the differentiability of $\Phi_{(u,v)}$ and $\Phi_{(u,v)}^{-1}$.
\end{proof}
\section{The Spaces of Null Geodesics as Lens Spaces}
\label{Lens}

We will now make use of the results discussed in Section \ref{Quaternionic} to show that the spaces of null geodesics of the spacetimes $(\S^2\times \S^1, g_c)$ are lens spaces.

\begin{Definition}
	Consider $\S^3\subseteq\mathbb{C}\times\mathbb{C}$. Let $p,q\in\mathbb{Z}^+$ be coprime integers. Consider the $\mathbb{Z}_p$ action on $\S^3$ generated by
	\[
	(z_0,z_1)\mapsto (e^{\frac{2\pi i}{p}}z_0, e^{\frac{2\pi i q}{p}}z_1).
	\]
	The lens space $(p,q)$ is $L(p,q) = \S^3/\mathbb{Z}_p$, with the induced differentiable and topological structures.
\end{Definition}

Consider the lens space $L(p,p-1)$. The $\Z_p$ action that generates it is 
\[
(z_0,z_1)\mapsto (e^{\frac{2\pi i}{p}}z_0, e^{-\frac{2\pi i}{p}}z_1).
\]

One can also construct this space as a quotients of $S\H\cong\S^3$. Indeed, using the canonical isomorphism
\[
(\mu+ ai, b+ci)\mapsto \mu+ai+bj+ck
\]
between $\R^4 \cong \C\times\C$ and $\H$, the three-sphere $\S^3$ gets canonically identified with $S\H$, as stated at the beginning of Section \ref{Quaternionic}. Note that by letting $z_0 = \mu+ai, z_1 = b+ci\in\C$, this isomorphism can be rewritten as
\[
(z_0,z_1)\mapsto z_0+z_1j.
\]

Hence, one can translate the $\Z_p$ action on $\S^3$ that defines the lens space $L(p,p-1)$ onto the unit quaternions $S\H$ as 
\[
z_0+z_1j\mapsto e^{\frac{2\pi i}{p}}z_0+e^{-\frac{2\pi i}{p}}z_1j =z_0e^{\frac{2\pi i}{p}} +z_1je^{\frac{2\pi i}{p}} =  (z_0+z_1j)e^{\frac{2\pi i}{p}},
\]
for $z_0,z_1\in\C$, that is, 
\[
q\mapsto qe^\frac{2\pi i}{p}.
\]
Then, trivially, $L(p,p-1)\cong S\H/\Z_p$.

Note that the $\Z_2$-action on $S\H$ that defines the lens space $L(2,1)$ is precisely the given by the antipodal map. Hence, we have showed

\begin{Proposition}
	We have $ST(S\V)\cong S\H/\Z_2\cong L(2,1)$. Also, $ST\S^2 \cong L(2,1) = \R P^3$.
\end{Proposition}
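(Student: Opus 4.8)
The plan is to chain together the structural identifications already obtained, so that the statement becomes essentially a bookkeeping corollary. First I would invoke the Corollary following Proposition \ref{Difeoprimer}: for any orthogonal pair $u,v\in S\V$ the map $\Phi_{(u,v)}$ is a diffeomorphism $S\H/\Z_2\cong ST(S\V)$, where the $\Z_2$-action on $S\H$ is generated by the antipodal map $q\mapsto -q$ (this is exactly what the relation $\Phi_{(u,v)}(-q)=\Phi_{(u,v)}(q)$ and the two-point fibres in Proposition \ref{Difeoprimer} encode). Together with the identification $ST\S^2\cong ST(S\V)$ fixed at the start of Section \ref{Quaternionic}, this already gives $ST\S^2\cong S\H/\Z_2$.

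Next I would use the computation carried out just before the statement: under the isomorphism $(z_0,z_1)\mapsto z_0+z_1j$ between $\C\times\C$ and $\H$, the $\Z_p$-action generating the lens space $L(p,p-1)$ becomes $q\mapsto q\,e^{2\pi i/p}$. Specialising to $p=2$ and using $e^{\pi i}=-1$, this action is precisely the antipodal map $q\mapsto -q$, so $S\H/\Z_2\cong L(2,1)$. Combining with the first step yields $ST(S\V)\cong S\H/\Z_2\cong L(2,1)$ and hence $ST\S^2\cong L(2,1)$.

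Finally, to identify $L(2,1)$ with $\R P^3$ I would observe that $\R P^3$ is by definition the quotient of $\S^3$ by its antipodal involution, which under $\S^3\cong S\H$ is the very $\Z_2$-action used above; therefore $L(2,1)=\S^3/\Z_2=\R P^3$. No genuine obstacle arises here: every diffeomorphism in play is compatible with the same antipodal $\Z_2$-action, and the only point deserving a moment's care is to verify---via the explicit formulas of Section \ref{Quaternionic}---that the quotient appearing in the Corollary, the quotient defining $L(2,1)$, and the quotient defining $\R P^3$ are literally the same equivalence relation on $S\H\cong\S^3$.
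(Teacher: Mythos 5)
Your proposal is correct and follows essentially the same route as the paper: the diffeomorphism $\Phi_{(u,v)}$ from the Corollary to Proposition \ref{Difeoprimer} identifies $ST(S\V)\cong S\H/\Z_2$ for the antipodal action, and the quaternionic rewriting of the lens-space action as $q\mapsto qe^{2\pi i/p}$, specialised to $p=2$, shows this is exactly the $\Z_2$-action defining $L(2,1)$, with $L(2,1)=\R P^3$ holding by definition of the antipodal quotient. The paper's argument is precisely this chain of identifications, so no further comment is needed.
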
 

The theory developed in Section \ref{Quaternionic} allows us to formalise and prove the following key result in our discussion.

\begin{Proposition}
	Let $\Phi_{(j, k)}$ be the diffeomorphism between $S\H/\Z_2$ and $ST(S\V)$ induced by the pair $(j, k)\in S \V\times S\V$. Then, the $\Z_{2c}$-action on $S\H$ that generates the lens space $L(2c,2c-1)$ descends to a $\Z_c$-action on $S\H/\Z_2$ that, via $\Phi_{(j,k)}$, induces the $\Z_c$-action on $ST(S\V)$ generated by 
	\[(u,v) \mapsto \begin{pmatrix}
		\cos\frac{2\pi}{c} &  	\sin\frac{2\pi}{c} \vspace{.1cm}\\
		-\sin\frac{2\pi}{c}&  	\cos\frac{2\pi }{c}
	\end{pmatrix}\begin{pmatrix}
		u\\v
	\end{pmatrix}.\]
	Hence, $\Phi_{(j,k)}$ induces a diffeomorphism between $S T(S\V)/\mathbb{Z}_c$ and $L(2c,2c-1)$.
\end{Proposition}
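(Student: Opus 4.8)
The plan is to carry everything out at the level of unit quaternions and then push forward through the maps built in Section~\ref{Quaternionic}. Recall from the discussion preceding the statement that $L(2c,2c-1)\cong S\H/\Z_{2c}$, where the generating $\Z_{2c}$-action is right multiplication $\rho\colon q\mapsto q\,e^{2\pi i/(2c)}=q\,e^{\pi i/c}$. The first step is to check that $\rho$ descends to $S\H/\Z_2$ and there generates a $\Z_c$-action. The $c$-th power of $\rho$ is $q\mapsto q\,(e^{\pi i/c})^{c}=q\,e^{\pi i}=-q$, which is precisely the antipodal map appearing in the Corollary following Proposition~\ref{Difeoprimer}. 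Hence the cyclic group $\langle\rho\rangle\cong\Z_{2c}$ contains the antipodal $\Z_2$ as its unique order-two subgroup, and the exact sequence $1\to\Z_2\to\Z_{2c}\to\Z_c\to 1$ yields an induced $\Z_c$-action on $S\H/\Z_2$ generated by $[q]\mapsto[q\,e^{\pi i/c}]$; this is well defined because $-1$ is central in $\H$, so the antipodal map commutes with right multiplication.

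The core of the argument is to transport this action through $\Phi_{(j,k)}$, i.e.\ to express $\Phi_{(j,k)}([q\,e^{\pi i/c}])$ in terms of $\Phi_{(j,k)}([q])=(qjq^{-1},qkq^{-1})$. Since
\[
\Phi_{(j,k)}([q\,e^{\pi i/c}])=\bigl(q\,(e^{\pi i/c}\,j\,e^{-\pi i/c})\,q^{-1},\; q\,(e^{\pi i/c}\,k\,e^{-\pi i/c})\,q^{-1}\bigr),
\]
everything reduces to conjugating $j$ and $k$ by $e^{\pi i/c}=\cos\frac{\pi}{c}+\sin\frac{\pi}{c}\,i$. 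Applying the conjugation formula (the technical lemma in the proof of Proposition~\ref{Difeoprimer}) with $z=i$ and $\alpha=2\pi/c$, using $\langle i,j\rangle=\langle i,k\rangle=0$ together with the cross-product identities $i\times j=k$ and $i\times k=-j$ for the convention $u\times v=(uv-vu)/2$, I expect to obtain
\[
e^{\pi i/c}\,j\,e^{-\pi i/c}=j\cos\frac{2\pi}{c}+k\sin\frac{2\pi}{c},\qquad e^{\pi i/c}\,k\,e^{-\pi i/c}=-j\sin\frac{2\pi}{c}+k\cos\frac{2\pi}{c}.
\]
Because conjugation $q(\,\cdot\,)q^{-1}$ is $\R$-linear, it follows that $\Phi_{(j,k)}$ sends $[q\,e^{\pi i/c}]$ to the image of $\Phi_{(j,k)}([q])$ under the $2\times 2$ rotation matrix written in the statement; that is, $\Phi_{(j,k)}$ intertwines the descended $\Z_c$-action on $S\H/\Z_2$ with the stated rotation $\Z_c$-action on $ST(S\V)$.

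To conclude, I would use that $\Phi_{(j,k)}\colon S\H/\Z_2\to ST(S\V)$ is a diffeomorphism (again the Corollary following Proposition~\ref{Difeoprimer}); an equivariant diffeomorphism descends to a diffeomorphism of orbit spaces, so $ST(S\V)/\Z_c\cong (S\H/\Z_2)/\Z_c$. Finally, since $\langle\rho\rangle\cong\Z_{2c}$ is cyclic and contains the antipodal $\Z_2$, the iterated quotient collapses: $(S\H/\Z_2)/\Z_c=S\H/\Z_{2c}\cong L(2c,2c-1)$, which gives the claim. The step I expect to require genuine care is the sign bookkeeping in the conjugation computation: the matrix in the statement has determinant $1$ exactly because $i\times k=-j$ (with $+j$ one would land on a non-rotation linear map), so one must fix the orientation conventions consistently with those of Section~\ref{Quaternionic}. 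Everything else is formal: linearity of quaternionic conjugation and a routine passage to quotients.
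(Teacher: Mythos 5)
Your proposal is correct and follows essentially the same route as the paper: transport the generator $q\mapsto qe^{\pi i/c}$ through $\Phi_{(j,k)}$ and check it becomes the stated rotation of the pair $(qjq^{-1},qkq^{-1})$. The only cosmetic difference is that you perform the conjugation via the technical rotation lemma with axis $i$ and angle $2\pi/c$, while the paper uses the anticommutation identity $e^{\pi i/c}j\,e^{-\pi i/c}=je^{-2\pi i/c}$ (and similarly for $k$) before re-expanding; these are the same computation, and your explicit verification that the $\Z_{2c}$-action descends to a $\Z_c$-action on $S\H/\Z_2$ fills in a step the paper leaves implicit.
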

\begin{proof}
	Note first that, since the action that defines the lens space preserves the fibres of the map $\tau_{i}:S\H\to S\V$ induced by $i= j\times k$, Proposition \ref{propcomm} assures that the induced action on $ST(S\V)$ will preserve the fibres of $f$, just like the action defined on $ST(S\V)$ in the statement of the corollary.
	
	Let $q\in S\H$. The $\Z_{2c}$-action on $S\H$ that defines the lens space is generated by $q\mapsto qe^{\frac{\pi i}{c}}$. Then, $\Phi_{(j,k)}(qe^{\frac{\pi i}{c}}) = \Big(qe^{\frac{\pi i}{c}}je^{-\frac{\pi i}{c}}q^{-1}, qe^{\frac{\pi i}{c}} k e^{-\frac{\pi i}{c}}q^{-1}\Big) = \Big(qje^{\frac{-2 \pi i}{c}}q^{-1}, qke^{\frac{-2 \pi i}{c}}q^{-1}\Big)$.
	
	Now, consider $\Phi_{(j,k)}(q) = (qjq^{-1}, qkq^{-1})$. Then, 
	\begin{align*}
		\cos\frac{2\pi}{c}qjq^{-1}+\sin\frac{2\pi}{c}qkq^{-1} = qj\Big(\cos\frac{2\pi}{c}-i\sin\frac{2\pi}{c}\Big)q^{-1} = qje^{-\frac{2\pi i }{c}}q^{-1},
	\end{align*}
	and
	\begin{align*}
		-\sin\frac{2\pi}{c}qjq^{-1}+\cos\frac{2\pi}{c}qkq^{-1} = qk\Big(\cos\frac{2\pi}{c} - i\sin\frac{2\pi}{c}\Big)q^{-1} = qke^{-\frac{2\pi i}{c}}q^{-1},
	\end{align*}
	as needed.
\end{proof}

Let us present the following result on lens spaces.
\begin{Proposition} \cite{bonahon}
	Let $p\in\mathbb{Z}^+$ and $q,q'\in\Z^+$ coprime with $p$. Suppose that $q = \pm q'\mod p$. Then, $L(p,q)\cong L(p,q')$. 
\end{Proposition}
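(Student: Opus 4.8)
The plan is to prove only the asserted implication, which is the easy half of the diffeomorphism classification of lens spaces; the argument splits according to whether $q\equiv q'$ or $q\equiv -q'$ modulo $p$, and in each case I would construct an explicit equivariant diffeomorphism of $\S^3$ and push it down to the quotients.

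First I would dispose of the case $q\equiv q'\pmod p$. Here $e^{2\pi i q/p}=e^{2\pi i q'/p}$, so the two $\Z_p$-actions on $\S^3\subseteq\C\times\C$ that define $L(p,q)$ and $L(p,q')$ are literally the same action; hence the orbit spaces coincide together with their differentiable structures, and nothing more is needed.

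For the case $q\equiv -q'\pmod p$, I would use the map
\[
c:\S^3\to\S^3,\qquad c(z_0,z_1)=(z_0,\overline{z_1}),
\]
which is a diffeomorphism of $\S^3$ (the restriction of an $\R$-linear involution of $\C\times\C$ preserving $|z_0|^2+|z_1|^2$, with $c^{-1}=c$). Writing $T$ and $T'$ for the chosen generators of the $\Z_p$-actions defining $L(p,q)$ and $L(p,q')$ respectively, a one-line computation gives
\[
c\circ T(z_0,z_1)=\big(e^{2\pi i/p}z_0,\ e^{-2\pi i q/p}\overline{z_1}\big)=T'\circ c(z_0,z_1),
\]
the last equality holding because $q\equiv -q'\pmod p$ forces $e^{-2\pi i q/p}=e^{2\pi i q'/p}$. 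Thus $c$ carries each $L(p,q)$-orbit bijectively onto an $L(p,q')$-orbit, so it descends to a bijection $\widehat{c}:L(p,q)\to L(p,q')$. Since $\gcd(p,q)=\gcd(p,q')=1$, both actions are smooth, proper and free, so by \cite[Thm.~21.13]{leesmooth} (recalled earlier in this chapter) both quotient projections $p_q:\S^3\to L(p,q)$ and $p_{q'}:\S^3\to L(p,q')$ are surjective submersions. Combining this with the smoothness of $c$ and of $c^{-1}=c$ and the universal property of submersions (a map out of a quotient by a surjective submersion is smooth iff its precomposition with the submersion is), one concludes that $\widehat{c}$ and its inverse are smooth, so $\widehat{c}$ is a diffeomorphism $L(p,q)\cong L(p,q')$. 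Putting the two cases together proves the proposition.

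There is no real obstacle in this argument; the only point requiring a little care is checking that the set-level bijection $\widehat{c}$ is genuinely a diffeomorphism, which is precisely what the quotient criterion cited above provides once one has verified that $c$ is equivariant for the two actions.
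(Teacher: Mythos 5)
Your argument is correct, and it is worth noting that the paper does not actually prove this proposition at all: it is quoted directly from \cite{bonahon}, whose content is the (much harder) full classification of lens spaces up to diffeomorphism, of which the stated implication is the elementary half. Your proposal therefore supplies a self-contained proof where the paper only gives a citation. The two cases are handled exactly as one should: for $q\equiv q'\pmod p$ the generators coincide, so the quotients are literally equal; for $q\equiv -q'\pmod p$ the conjugation $c(z_0,z_1)=(z_0,\overline{z_1})$ satisfies $c\circ T=T'\circ c$, hence $c\circ T^k=(T')^k\circ c$ for all $k$, so $c$ maps $\Z_p$-orbits to $\Z_p$-orbits and descends to a bijection of the quotients; smoothness of the descended map and of its inverse then follows from the fact that both projections $\S^3\to L(p,q)$, $\S^3\to L(p,q')$ are surjective submersions (the actions are smooth, proper and free precisely because $\gcd(p,q)=\gcd(p,q')=1$, as in the quotient theorem already invoked in the paper), together with the universal property of submersions. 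This is all that the paper needs, since the only use made of the proposition is $L(2c,2c-1)\cong L(2c,1)$, which falls under your second case with $q=2c-1\equiv -1\pmod{2c}$; so your proof could replace the external citation for the purposes of Theorem \ref{Bigone}, whereas the citation would still be required if one wanted the converse direction of the classification.
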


Thus, since $-(2c-1) = -2c+1 = 1\mod 2c$, we find 
$$L(2c,2c-1) \cong L(2c,1).$$ Hence, we have showed the most important result of this section,
\begin{Theorem}
	\label{Bigone}
	Let $(\S^2,g_\circ)$ be the unit sphere in $\R^3$ with its induced metric. Consider the manifold $M = \S^2\times\S^1$ and, being $t$ the coordinate on $\S^1$, define the Lorentzian metric on $M$ given by $g_c = g_\circ-\frac{1}{c^2}dt^2$. Let $\mathcal{N}_c$ be the space of null geodesics on $M$ under the metric $g_c$. Then,
	\[
	\mathcal{N}_c\cong L(2c,1).
	\]
\end{Theorem}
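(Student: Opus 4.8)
The plan is to assemble the identifications built up through this chapter into a single chain of diffeomorphisms $\mathcal{N}_c \cong ST\S^2/\Z_c \cong ST(S\V)/\Z_c \cong L(2c,2c-1) \cong L(2c,1)$. First I would invoke the coordinate analysis of Section \ref{LorentzianS2S1}, where we integrated the geodesic equations for $g_c$, imposed the null and future-pointing conditions, and found that every maximal future-pointing null geodesic is, up to reparametrization, of the form $\gamma(s) = (\mu(s), cs)$ with $\mu$ a unit-speed great circle in $\S^2$. Recording the pair $(x,u) = (\mu(0), \dot\mu(0)) \in ST\S^2$ determines $\gamma$, but $\gamma$ meets $\S^2\times\{0\}$ in $c$ equidistant points, so this choice is well defined only up to the $\Z_c$-action of Equation \eqref{ActionZc}; the resulting quotient is a smooth manifold because that action is smooth, free and proper. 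This gives $\mathcal{N}_c \cong ST\S^2/\Z_c$.

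Next I would transport this quotient through the quaternionic model. Under $ST\S^2 \cong ST(S\V)$ and the choice of the orthonormal pair $(j,k)$, the corollary to Proposition \ref{Difeoprimer} supplies a diffeomorphism $\Phi_{(j,k)}\colon S\H/\Z_2 \to ST(S\V)$ with $\Z_2$ the antipodal action. The computation of the preceding proposition in Section \ref{Lens} shows that the $\Z_{2c}$-action $q \mapsto q e^{\pi i/c}$ on $S\H$ — which under $\C\times\C \cong \H$ is exactly the action defining $L(2c,2c-1)$ — descends modulo $\Z_2$ to precisely the $\Z_c$-action \eqref{ActionZc} on $ST(S\V)$. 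Hence $\Phi_{(j,k)}$ induces a diffeomorphism $ST(S\V)/\Z_c \cong S\H/\Z_{2c} = L(2c,2c-1)$, and composing with $ST\S^2 \cong ST(S\V)$ yields $ST\S^2/\Z_c \cong L(2c,2c-1)$. Finally, since $-(2c-1) \equiv 1 \pmod{2c}$, the lens-space classification (the cited result of Bonahon) gives $L(2c,2c-1) \cong L(2c,1)$, which closes the chain.

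The substantive work is already behind us: the only genuinely non-formal ingredients are the explicit geodesic computation of Section \ref{LorentzianS2S1} producing $\mathcal{N}_c \cong ST\S^2/\Z_c$ and the bookkeeping that matches the $\Z_{2c}$ lens action on $S\H$ with the rotation action \eqref{ActionZc} on $ST(S\V)$. Given those, the proof of the theorem is the concatenation displayed above; the one point worth double-checking is that each map in the chain is a genuine diffeomorphism rather than a mere homeomorphism, which holds because every one of them was constructed, or quotiented, from smooth maps with smooth inverses. I do not expect any remaining obstacle.
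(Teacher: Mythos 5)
Your proposal follows essentially the same route as the paper: the geodesic analysis giving $\mathcal{N}_c\cong ST\S^2/\Z_c$ with the smooth, free and proper $\Z_c$-action of Equation \eqref{ActionZc}, the quaternionic diffeomorphism $\Phi_{(j,k)}$ transporting this to $S\H/\Z_{2c}=L(2c,2c-1)$, and the classification $L(2c,2c-1)\cong L(2c,1)$ via Bonahon. The chain of identifications and the supporting lemmas you cite are exactly those assembled in Sections \ref{LorentzianS2S1}--\ref{Lens}, so the argument is correct as it stands.
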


We conclude this section with the following remark. Note that we have showed that $L(4,1)\cong S T\S^2/\Z_2$, where the action of $\Z_2$ on $S T\S^2$ is the one generated by $(\sigma,d\sigma)$, where $\sigma:\S^2\to\S^2$ is the antipodal map. If we denote by $r:ST\S^2\to ST\S^2/\Z_c$ the projection, such an action induces a $\Z_2$ action on $\S^2$ given by 
\[
x\mapsto r\big((\sigma,d\sigma)(x,v)\big) = \sigma(x) = -x,
\]
for any $v\in S T_x\S^2$, i.e. the antipodal map. Thus, $L(4,1)$ is also an $\S^1/\Z^2\cong \S^1$ fibration over $\S^2/\Z_2 = \R P^2$. Since the fibration is induced by the canonical fibration of $S T\S^2$ on $\S^2$, we have showed that $L(4,1)\cong ST\R P^2$, the unit tangent bundle of the projective plane (for more details, see \cite{unitprojective}).

\section{The Contact Structure on $\mathcal{N}_c$}
\label{contactsttuctures}
In this section we explicitly compute the contact structure on the spaces $\mathcal{N}_c$. We will show how the contact structure on $\mathcal{N}_1\cong ST\S^2$ is precisely the canonical contact structure of $ST\S^2$ defined in Section \ref{UTB}, and that the contact structure on $\mathcal{N}_c$ for $c>1$ is the pushforward of this under the projection mapping $r:ST\S^2\to L(2c,1)$.

Let us first consider the case $c = 1$. Let $\gamma\in\mathcal{N}_1\cong ST\S^2$. Recall that $\gamma$ is given by the lift-up of the great circle $\mu:\R\to \S^2$ defined by the pair $(x,u)\in ST\S^2$ representing $\gamma$, that is, such that $\mu(0) = x$ and $\dot{\mu}(0) = u$. We will show that 
\[
\mathcal{H}_\gamma = \chi_{(x,u)},
\]
where $\chi$ is the canonical contact structure of $ST\S^2$. Take $(x,0)\in\gamma$ and $\gamma(\tau)\neq (x,0)$, with $0<\tau<\pi$.

Note that all geodesics in the sky $\mathfrak{S}_{(x,0)}$ intersect $\S^2\times\{0\}$ at $(x,0)$, so $\pi(\mathfrak{S}_{(x,0)}) = \{x\}$. Hence, $T_{x}\pi(T_\gamma\mathfrak{S}_{(x,0)}) = \lspan 0\rspan \subseteq \lspan u\rspan^\perp$. Thus, $T_\gamma\mathfrak{S}_{(x,0)}\subset \chi_{(x,u)}$.

Consider now $\gamma(\tau)\neq (x,0)$. Since $c = 1$, we know that $\gamma(s) = \big(\mu(s), s\big)$ and hence $y: = \pi_{\S^2}\big(\gamma(\tau)\big) = \mu(\tau)$. Let $v\in ST_y\S^2$ such that $\langle\dot{\mu}(\tau), v\rangle = 0$. Since all geodesics in $\S^2\times\S^1$ are travelled at the same speed, it is clear that the projection of the sky of $\gamma(\tau)$ is parametrized by
\[
\pi(\mathfrak{S}_{\gamma(\tau)})(s) = y\cos\tau+(\dot{\mu}(\tau)\cos s+v\sin s)\sin\tau,
\]
which is the set of points in $\S^2$ at distance $\tau$ of $y$, and $\pi(\gamma) = \pi(\mathfrak{S}_{\gamma(\tau)})(0)$. Hence, 
\[
T_x\pi(T_\gamma\mathfrak{S}_{\gamma(\tau)}) = \lspan\frac{d}{ds}\big|_{s = 0}\big(y\cos\tau+(\dot{\mu}(\tau)\cos s+v\sin s)\sin\tau\big)\rspan = \lspan v\sin\tau\rspan. 
\]

Since $\mu$ is the great circle defined by $x$ and $u$, we have $\mu(\R) = \lspan x, u\rspan \cap\S^2$. Since $v$ is orthogonal to $\dot{\mu}(\tau)$, it is orthogonal to $ \lspan x, u \rspan$, and hence $T_x\pi(T_\gamma\mathfrak{S}_{\gamma(\tau)})  = \lspan v\sin\tau\rspan\subset\lspan u\rspan^\perp$. This implies that $T_\gamma\mathfrak{S}_{\gamma(\tau)}\subset \chi_{(x,u)}$.

\begin{figure}[H]
	\centering
	\includegraphics[width = \textwidth, trim=4 200 4 4,clip]{geodesiques2}
\end{figure}

Since the canonical contact structure of $ST\S^2$ and the contact structure on $ST\S^2$ coming from it being a space of null geodesics both have rank 2, we have showed the following.

\begin{Theorem}
	The contact structure on $\mathcal{N}_1 = ST\S^2$ is the canonical contact structure on $ST\S^2$ defined in Section \ref{UTB}.
\end{Theorem}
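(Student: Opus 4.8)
The plan is to produce two points on the null geodesic $\gamma$ whose skies have transverse tangent lines at $\gamma$, to check that each of these tangent spaces is contained in $\chi_{(x,u)}$, and then to conclude by a dimension count, since both $\mathcal{H}_\gamma$ and $\chi_{(x,u)}$ have rank two. I will use throughout the identification $\mathcal{N}_1\cong ST\S^2$ that sends a null geodesic to the pair $(x,u)$, where $(x,0)$ is its unique intersection with $\S^2\times\{0\}$ and $u=\dot\mu(0)$ for the underlying unit-speed great circle $\mu(s)=x\cos s+u\sin s$, so that $\gamma(s)=(\mu(s),s)$. Recall that $\chi_{(x,u)}=(T_{(x,u)}\pi)^{-1}(\lspan u\rspan^\perp)$ with $\pi\colon ST\S^2\to\S^2$ the footpoint projection, and that $\mathcal{H}_\gamma=T_\gamma\mathfrak{S}_p\oplus T_\gamma\mathfrak{S}_q$ whenever $p,q\in\gamma$ are close enough for this sum to be direct.

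First I would take $p=(x,0)$. Every null geodesic through $(x,0)$ meets $\S^2\times\{0\}$ exactly at $(x,0)$, so under the identification above the whole sky $\mathfrak{S}_{(x,0)}$ lies in the fibre $\pi^{-1}(x)$. Hence $T_x\pi\big(T_\gamma\mathfrak{S}_{(x,0)}\big)=\lspan 0\rspan\subset\lspan u\rspan^\perp$, and therefore $T_\gamma\mathfrak{S}_{(x,0)}\subset\chi_{(x,u)}$.

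Next I would take $q=\gamma(\tau)=(\mu(\tau),\tau)$ for a small $\tau\in(0,\pi)$, set $y=\mu(\tau)$, and pick $v\in ST_y\S^2$ with $\langle\dot\mu(\tau),v\rangle=0$. The geodesics in $\mathfrak{S}_q$ are the lifts of the great circles through $y$, and since every null geodesic is travelled at unit speed in the $\S^2$-factor, their footpoints trace the circle of spherical radius $\tau$ about $y$, which can be parametrised as
\[
\pi\big(\mathfrak{S}_{\gamma(\tau)}\big)(s)=y\cos\tau+\big(\dot\mu(\tau)\cos s+v\sin s\big)\sin\tau,
\]
with $s=0$ recovering $\pi(\gamma)=x$. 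Differentiating at $s=0$ gives $T_x\pi\big(T_\gamma\mathfrak{S}_{\gamma(\tau)}\big)=\lspan v\sin\tau\rspan$. Now $v$ is orthogonal to $y=\mu(\tau)$ and to $\dot\mu(\tau)$, and $\{\mu(\tau),\dot\mu(\tau)\}$ spans the same plane of $\R^3$ as $\{x,u\}$; hence $v\perp u$, so $T_x\pi\big(T_\gamma\mathfrak{S}_{\gamma(\tau)}\big)\subset\lspan u\rspan^\perp$ and $T_\gamma\mathfrak{S}_{\gamma(\tau)}\subset\chi_{(x,u)}$.

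Finally, choosing $\tau$ small enough that $T_\gamma\mathfrak{S}_{(x,0)}\cap T_\gamma\mathfrak{S}_{\gamma(\tau)}=\{0\}$, which is possible because $\mathcal{H}$ is well defined, the plane $\mathcal{H}_\gamma=T_\gamma\mathfrak{S}_{(x,0)}\oplus T_\gamma\mathfrak{S}_{\gamma(\tau)}$ is a rank-two subspace of the rank-two space $\chi_{(x,u)}$, forcing equality. I expect the only real friction to be the bookkeeping: transporting the skies and their tangent spaces through the identification $\mathcal{N}_1\cong ST\S^2$ and keeping the footpoint projections straight; once the two parametrisations above are set up, the inclusions are immediate. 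One could alternatively invoke the general principle that a spacelike hypersurface met exactly once by every null geodesic realises $\mathcal{N}$ with its canonical contact structure as the unit tangent bundle of that hypersurface, applied to $C=\S^2\times\{0\}$, but since that statement is not established in the excerpt the direct computation above is preferable.
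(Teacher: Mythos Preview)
Your proposal is correct and follows essentially the same approach as the paper: pick the two points $(x,0)$ and $\gamma(\tau)$ on $\gamma$, show each sky's tangent line lands in $\chi_{(x,u)}$ via the footpoint projection (the first because the sky sits in the fibre over $x$, the second by parametrising the projected sky as the spherical circle of radius $\tau$ about $y$ and differentiating), and conclude by a rank count. Your justification that $v\perp u$ is in fact slightly more explicit than the paper's, since you spell out that $v$ is orthogonal to both $\mu(\tau)$ and $\dot\mu(\tau)$, which span the same plane as $x$ and $u$.
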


Consider now the case $c>1$. Then, $\mathcal{N}_c \cong L(2c,1)$. As discussed in Section \ref{Lens}, the lens spaces $L(2c,1)\cong \mathcal{N}_c$ can be obtained as a quotient of $ST\S^2$ by the action of $\Z_c$. Let
\[
\begin{array}{cccc}
	r:& ST\S^2\to L(2c,1) \cong \mathcal{N}_c
\end{array}
\]
be the projection map, which provides a surjective local diffeomorphism. In order to simplify the notation, we will denote an element of $ST\S^2$ by $u\in ST\S^2$, understanding that $u\in ST_{\pi(u)}\S^2$. Let us also denote by $[u]\in L(2c, 1)$ the class of $u$ under the action of $\Z_c$.

Let $[u]\in L(2c,1)$ and let $U'$ be an open subset of $\S^2$ such that $u$ is the only preimage of $[u]$ in $U:=\pi^{-1}(U')$. Then, $U$ can be taken to be small enough so that
\[
r|_{U}:U\to r(U),
\]
is a diffeomorphism. We will show that $$\mathcal{H}_{[u]} = (r|_U)_*\chi_{u},$$
where $r_*$ is the pushforward of $r$.

It is necessary to show first that the pushforward of $\chi$ is well defined, that is, 
$$(r|_U)_*\chi_u= (r|_V)_*\chi_v$$ for $u,v\in r^{-1}([u])$, and taking open subsets $U',V'\subset\S^2$ as described above in order to restrict $r$ to a diffeomorphism on $U = \pi^{-1}(U')$ and $V = \pi^{-1}(V')$.

Consider $\chi_u$. It is enough to show that two linearly independent vectors in $\chi_{u}$ map to linearly independent vectors in $\chi_{v}$ via
\[
(r|^{-1}_V)_*\circ (r|_U)_*.
\]

Note that the mapping $r|_V^{-1}\circ r|_U$ need not be defined for all points in $U$, as there might be points in $r(U)$ not in $r(V)$. However, it is well defined for a neighbourhood $W\subset ST\S^2$ of $u$, which is all that is needed.

Assume $x = \pi(u)$ and $y = \pi(v)$ are not antipodal points, and take a curve $$\alpha:(-\varepsilon, \varepsilon)\to W\subseteq ST\S^2$$ such that $\alpha(0) = (x,u)$ and the image of $\alpha$ lies entirely in the fibre of $x\in\S^2$. Then, $\dot{\alpha}(0)\in\chi_{u}$.  Let $\beta = r|_V^{-1}\circ r|_U\circ\alpha$. It is clear that $\beta(0) = v$. We will now show that $\dot{\beta}(0)\in \chi_{v}$. Consider the curve $\pi\circ\beta$ on $\S^2$. Let $u'\in ST_x\S^2$ such that $\langle u,u'\rangle = 0$. Then, $\pi\circ\beta$ is an arc of the circle parametrized by
\[
\phi(s) = x\cos\tau +\sin\tau(u \cos s + u'\sin s),
\]
where $\tau\in(0,\pi)$ is such that $\phi(0) = x\cos\tau +\sin\tau u = y$. We can compute 
\[
T_{v}\pi\big(\dot{\beta}(0)\big) = \frac{d}{ds}\big|_{s = 0}\phi(s) =  u'\sin\tau \in\lspan v\rspan^\perp,
\]
where the fact that $ u'\sin\tau\in\lspan v\rspan^\perp$ follows from the same argument as above. Thus, $\dot{\beta}(0)\in\chi_{v}$. It is also clear that $\dot{\beta}(0)$ is nonzero.

\begin{figure}[H]
	\centering
	\includegraphics[width = .9\textwidth, trim=4 4 4 4,clip]{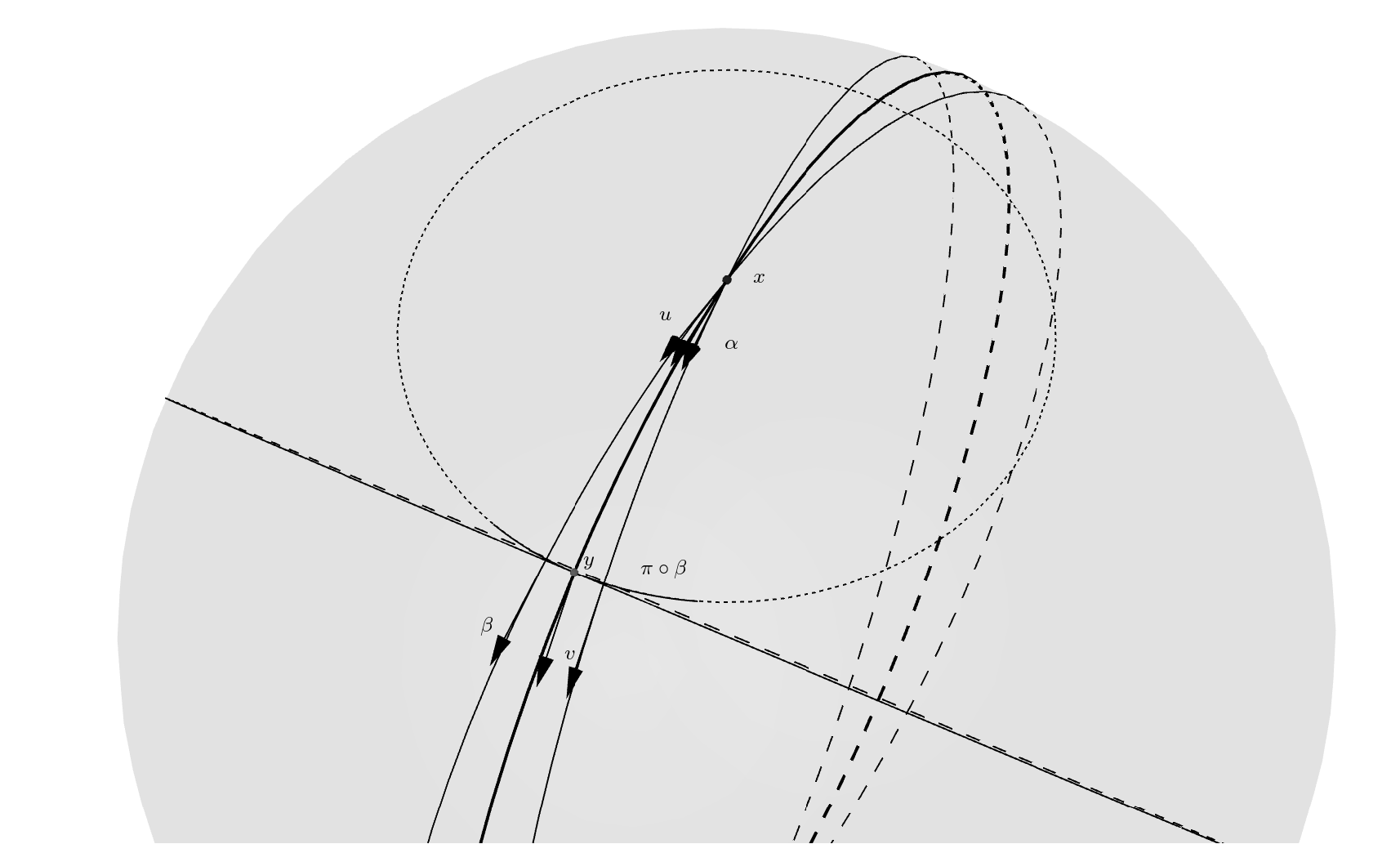}
\end{figure}

Let us now consider the following curve in $W$. Let $\mu^\perp$ be the great circle in $\S^2$ defined by $(x,u^\perp)$, where $u^\perp$ in any vector in $ST_{x}\S^2$ orthogonal to $u$, and with $\mu^{\perp}(0) = x$. Let $c:(-\varepsilon,\varepsilon)\to W$ be the curve in $W$ defined by $c(0) = (x,u)$ and $c(s) = (\mu^\perp(s),z(s))$, where $z(s)\in ST_{\mu^\perp(s)}\S^2$ is the vector tangent to the geodesic great circle that goes through the points $\mu^\perp(s)$ and $y$, pointing to the same hemisphere as $u$. It is clear that $T_{u}\pi\big(\dot{c}(0)\big)=  u^\perp\in \chi_{u}$ and that it is linearly independent to the previously defined $\dot{\alpha}(0)$. 

Let now $\theta(s) = r|_V^{-1}\circ r|_U\circ c(s)$. Clearly, $\theta(0) = (y,v)$. We will show that $\dot{\theta}(0)\in\chi_{v}$. By construction, we have $\pi\circ\theta(s) = y$, which implies that
\[
T_{v}\pi(\dot{\theta}(0))  = 0\in\lspan v\rspan^\perp.
\]

Hence, $\dot{\theta}(0)\in\chi_{v}$. Also, $\dot{\theta}(0)$ is clearly non-zero.

\begin{figure}[H]
	\centering
	\includegraphics[width = .8\textwidth]{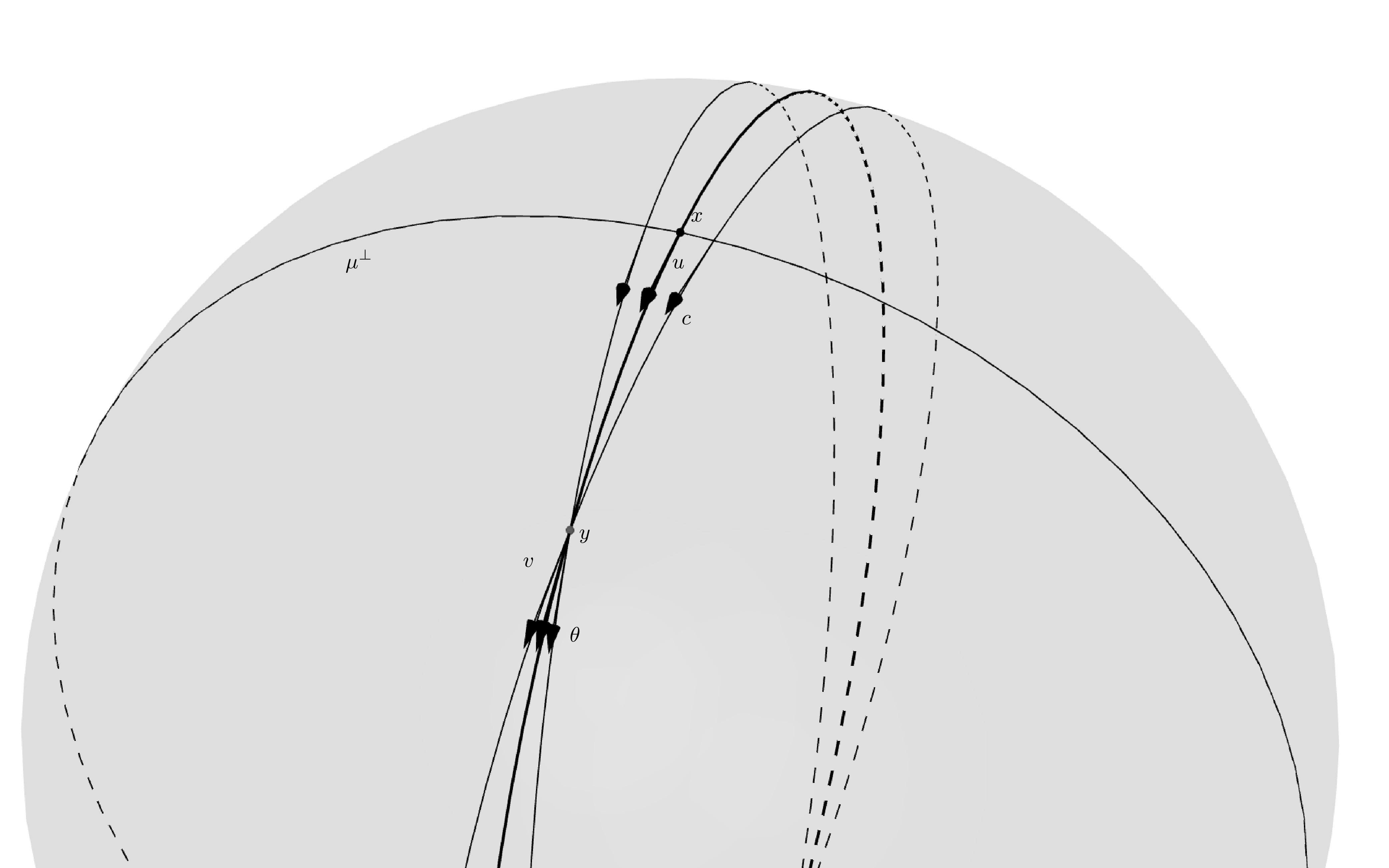}
\end{figure}

Consider now the case in which $x$ and $y$ are antipodal. Then, the image of $\alpha$ lies entirely in the fibre of $y$ and hence the projection of its tangent vector at time 0 onto $\S^2$ is null. Thus, the tangent vector lies in $\chi_{v}$. Now, take as $c$ the curve $c(0) = (\mu^\perp(s), z(s))$, where $z(s)\in ST_{\mu^\perp(s)}\S^2$ is the perpendicular vector to $\dot{(\mu^\perp)}(s)$ that points to the same hemisphere as $u$. Then, the image $\theta$ of $c$ projects onto $\S^2$ via $\pi$ to the great circle that contains $x$ and $y$ and is perpendicular to $u$ and $v$. The claim follows.

\begin{figure}[H]
	\centering
	\includegraphics[width = .9\textwidth, trim=4 20 4 4,clip]{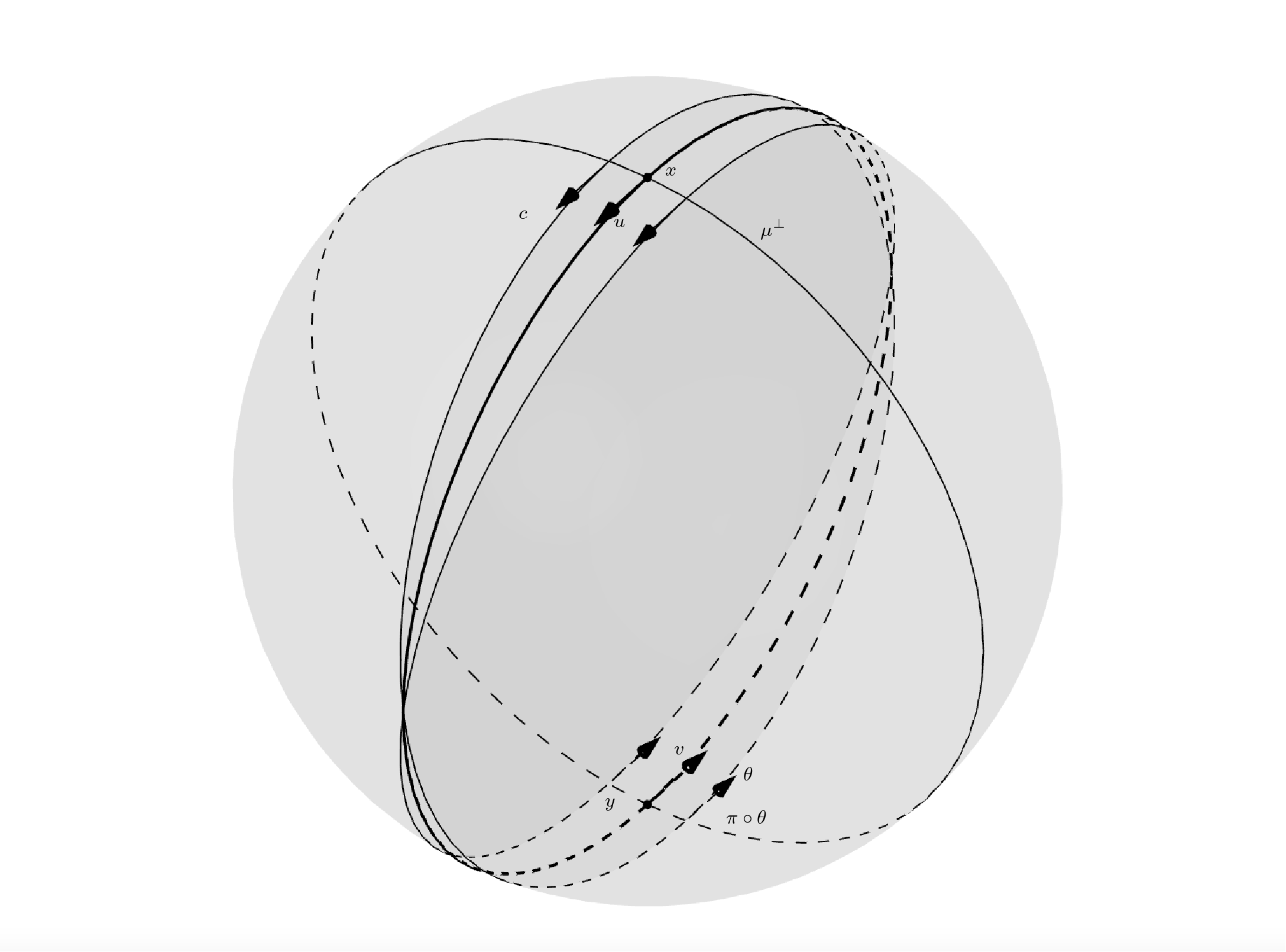}
\end{figure}

Hence, we have showed
\[
(r|_U)_*\chi_u= (r|_V)_*\chi_v,
\]
and thus the pushforward of $\chi$ is well defined.

Let us now show that $$\mathcal{H}_{[u]} = (r|_U)_*\chi_u.$$

Let $x = \pi(u)$. We know that $[u]$ describes the geodesic $\gamma$ in $\S^2\times\S^1$ that intersects $U'\times\{0\}$ only at $(x,0)$. Take $(x,0)\in\gamma$ and consider its sky $\mathfrak{S}_{(x,0)}$. It is clear that $\mathfrak{S}_{(x,0)} = \{[v]\ |\ v\in ST_{x}\S^2\}$, and thus
\begin{align*}
	\big(\pi\circ r|_{U}^{-1}\big)_* \big(T_{[u]}\mathfrak{S}_{(x,0)}\big) =& T_{x}(\{\pi(v)\ |\ v\in ST_{x}\S^2\}) \\ = & T_{x}(\{x\}) = \lspan0\rspan\subset\lspan u\rspan^\perp,
\end{align*}
from which we deduce that $$(r|_{U}^{-1})_* \big(T_{[u]}\mathfrak{S}_{(x,0)}\big)\subset \chi_{u}$$ and, thus, $$T_{[u]}\mathfrak{S}_{(x,0)}\subset (r|_U)_*(\chi_{u}).$$

Take now $\gamma(\tau)\neq(x,0)$ but close enough so that $\pi(\mathfrak{S}_{\gamma(\tau)})\subset U'$. As discussed previously, $\pi\circ  r|_U^{-1}(\mathfrak{S}_{\gamma(\tau)})$ describes a circle $\phi(s)$ in $U'$ whose tangent vector at $(x,0)$ is orthogonal to $u$. Hence,
\begin{align*}
	(\pi\circ r|_U^{-1})_* \big(T_{[u]}\mathfrak{S}_{\gamma(\tau)}\big) =& T_{x}\Big(\pi\circ r|_U^{-1}(\mathfrak{S}_{\gamma(\tau)} )\Big) \\=& T_{x}\{\phi(s)\ |\ s\in \mathbb{R}\}\subset\lspan u\rspan^\perp,
\end{align*}
which implies that $$(r|_U^{-1})_* \big(T_{[u]}\mathfrak{S}_{\gamma(\tau)}\big)\subset \chi_{u},$$ and hence $$T_{[u]}\mathfrak{S}_{\gamma(\tau)}\subset (r|_U)_*(\chi_{u}).$$

Thus, we have showed
\begin{Theorem}
	\label{contactstructure}
	Let $c\in\mathbb{N}^+$. Let $[u]\in\mathcal{N}_c\cong L(2c,1)$. Let $u\in r^{-1}([u])$ and $U'$ an open neighbourhood of $\pi(u)$ in $\S^2$ such that $u$ is the only preimage of $[u]$ in $U = \pi^{-1}(U')$ and $r|_U$ is a diffeomorphism onto its image. Then, 
	\[
	\mathcal{H}_{[u]} = (r|_U)_*(\chi_{u})
	\]
	where $\chi$ is the canonical contact structure on $ST\S^2$ defined in Section \ref{UTB}.
\end{Theorem}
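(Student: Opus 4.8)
The plan is to verify the identity $\mathcal{H}_{[u]} = (r|_U)_*(\chi_u)$ pointwise by exhibiting, for each point $[u]\in\mathcal{N}_c$, two skies whose tangent spaces at $[u]$ span $\mathcal{H}_{[u]}$ and showing that their pullbacks under $r|_U^{-1}$ land inside $\chi_u$. Since both $\mathcal{H}_{[u]}$ and $(r|_U)_*(\chi_u)$ are rank-$2$ distributions on a three-manifold, once we show one is contained in the other the equality follows for dimension reasons. The strategy exactly mirrors the $c=1$ argument already carried out above: the definition of the canonical contact structure of the space of null geodesics is $\mathcal{H}_\gamma = T_\gamma\mathfrak{S}_x\oplus T_\gamma\mathfrak{S}_y$ for two nearby points $x,y$ on $\gamma$, and the characterisation of $\chi$ on $ST\S^2$ is $\chi_u = (T_u\pi)^{-1}(\lspan u\rspan^\perp)$, so it suffices to check that the $\pi$-projections of the relevant sky-tangent vectors are orthogonal to $u$.

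First I would fix $[u]\in\mathcal{N}_c\cong L(2c,1)$, let $u\in r^{-1}([u])$, and choose $U'\subset\S^2$ small enough that $u$ is the unique preimage of $[u]$ in $U=\pi^{-1}(U')$ and $r|_U$ is a diffeomorphism onto its image — this is possible because $r$ is a local diffeomorphism. Writing $x=\pi(u)$, the geodesic $\gamma$ represented by $[u]$ meets $\S^2\times\{0\}$ at the $c$ equidistant points of the great circle $\mu$ defined by $(x,u)$, but only one of them, namely $(x,0)$, projects into $U'$. For the first sky I would take $(x,0)\in\gamma$: every geodesic through $(x,0)$ meets $\S^2\times\{0\}$ at $(x,0)$, so its $r|_U^{-1}$-image has constant $\pi$-projection equal to $x$; hence $(\pi\circ r|_U^{-1})_*(T_{[u]}\mathfrak{S}_{(x,0)})=\lspan 0\rspan\subset\lspan u\rspan^\perp$, giving $(r|_U^{-1})_*(T_{[u]}\mathfrak{S}_{(x,0)})\subset\chi_u$. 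For the second sky I would take $\gamma(\tau)\neq(x,0)$ with $\tau$ small enough that $\pi(\mathfrak{S}_{\gamma(\tau)})\subset U'$: since all null geodesics of $(M,g_c)$ travel at the same speed, the $\pi$-projection (followed by $r|_U^{-1}$) of $\mathfrak{S}_{\gamma(\tau)}$ traces a small circle of radius $\tau$ centred at $\mu(\tau)$, passing through $x$, and — as in the computation already displayed for $c=1$ — its tangent at $x$ is a multiple of a vector orthogonal to the plane $\lspan x,u\rspan$, in particular orthogonal to $u$. Thus $(r|_U^{-1})_*(T_{[u]}\mathfrak{S}_{\gamma(\tau)})\subset\chi_u$ as well.

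Combining the two inclusions, $\mathcal{H}_{[u]}=T_{[u]}\mathfrak{S}_{(x,0)}\oplus T_{[u]}\mathfrak{S}_{\gamma(\tau)}$ maps under $(r|_U^{-1})_*$ into $\chi_u$, and since this sum is two-dimensional (the two skies are chosen nearby so that their tangent spaces meet trivially, which is exactly the hypothesis making $\mathcal{H}$ well defined) and $\chi_u$ is also two-dimensional, we get $(r|_U^{-1})_*(\mathcal{H}_{[u]})=\chi_u$, i.e. $\mathcal{H}_{[u]}=(r|_U)_*(\chi_u)$. The only genuinely delicate point — and the step I expect to be the main obstacle — is making the claim that the projected sky $\pi\circ r|_U^{-1}(\mathfrak{S}_{\gamma(\tau)})$ really is the stated small circle fully rigorous: one must check that, for $\tau$ sufficiently small, the $c$ sheets of $r^{-1}$ near $[u]$ are separated and that the single sheet landing in $U'$ reproduces exactly the local picture of the $c=1$ case, so that the earlier parametrisation $\phi(s)=x\cos\tau+\sin\tau(u\cos s+u^\perp\sin s)$ applies verbatim. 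Once that local identification is granted, the orthogonality computation is identical to the one already performed, and no further calculation is needed.
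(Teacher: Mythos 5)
Your proposal is correct and follows essentially the same route as the paper's own argument: for the fixed preimage $u$ you pull the tangent spaces of the two skies $\mathfrak{S}_{(x,0)}$ and $\mathfrak{S}_{\gamma(\tau)}$ back through $(\pi\circ r|_U^{-1})_*$, check that their projections lie in $\lspan u\rspan^\perp$ exactly as in the $c=1$ computation, and conclude by comparing ranks. The only structural difference is that the paper first devotes a substantial argument to showing that $(r|_U)_*\chi_u$ is independent of the chosen preimage $u$; in your per-choice formulation this step is not needed for the stated equality and in fact follows a posteriori, since $\mathcal{H}_{[u]}$ does not depend on the choice.
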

\chapter{Prolongations and Deprolongations}
\label{ProlandDeprolChap}
\section{The Space of Null Geodesics as a Deprolongation}
\label{Deprol}
In Section \ref{EngelSection}, we discussed how a Lorentzian three-manifold defines a natural Engel manifold via the Lorentz prolongation. Similarly, a contact three-manifold produces an Engel manifold by means of the Cartan prolongation. The current section studies how these structures are related to each other and to the space of null geodesics of a spacetime.

 Let us present the following result, which was first derived in \cite[Thm. 4.2]{deturck} for Riemannian manifolds and later generalised to the pseudo-Riemannian case in \cite{kowalski}. 

\begin{Theorem}[\cite{kowalski}]
	Let $(M, g)$ be a pseudo-Riemannian three-manifold. Then, in a neighbourhood of every point $x\in M$, there is a local chart of $M$ in which the metric is diagonal.
\end{Theorem}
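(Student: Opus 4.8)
\emph{Plan.} Unwinding the notion of a chart (Section~\ref{IntroToLorentz}), the claim is that near every $x\in M$ there are smooth functions $x^1,x^2,x^3$ with linearly independent differentials such that $g(\partial_{x^i},\partial_{x^j})=0$ for $i\neq j$. Since the matrix $(g^{ij})$ is the inverse of $(g_{ij})$ and a matrix is diagonal precisely when its inverse is, this is the same as requiring the exact $1$-forms $dx^1,dx^2,dx^3$ to be mutually orthogonal for the dual metric $g^*$ and nowhere $g^*$-null (which then forces their independence). So the goal is to produce, near $x$, a \emph{closed} coframe that is $g^*$-orthogonal and nowhere null; a Poincaré primitive of each member is a coordinate function, and geometrically one is exhibiting a triply orthogonal system of surfaces through $x$. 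I would first dispatch the easy, purely pointwise part: a fibrewise Gram--Schmidt on any smooth coframe yields a smooth $g^*$-orthogonal coframe near $x$, and at $x$ one can make all three members non-null by taking an anisotropic orthonormal flag of $T^*_xM$ for the signature $(2,1)$, non-nullity persisting on a neighbourhood. What is left --- and where all the difficulty sits --- is the integrability condition $d\theta^i=0$.

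\emph{Analytic case.} When $g$ is real-analytic I would encode the triply orthogonal system as an exterior differential system --- or, after singling out one non-null direction, as an overdetermined first-order PDE system for the remaining two coordinate functions --- and solve it with analytic Cauchy data. Concretely, prescribe two of the three families of surfaces along an analytic hypersurface $\Sigma\ni x$ transverse to the chosen direction, and solve for the third family together with the orthogonality normalisation. The step to verify is that this Cauchy problem is non-characteristic, equivalently that the system is in involution in the sense of Cartan--Kähler; this should reduce to the behaviour of the symbol along the non-null orthogonal coframe constructed above. Cauchy--Kovalevskaya (or Cartan--Kähler) then delivers a local analytic triply orthogonal system, and the indefinite signature affects only signs.

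\emph{Smooth case and main obstacle.} For a merely smooth metric Cauchy--Kovalevskaya is not available, and this is the substance of \cite{deturck} and \cite{kowalski}. I would start from a gradient-flow (Fermi-type) reduction: choose $x^1$ with $\nabla x^1(x)$ non-null, extend arbitrary coordinates off $\{x^1=0\}$ by flowing along $\nabla x^1/g(\nabla x^1,\nabla x^1)$, obtaining $g=a\,(dx^1)^2+h_{ab}\,dx^a dx^b$ with $g_{1a}=0$, i.e.\ a one-parameter family of surface metrics. It then remains to diagonalise that family without reintroducing the cross terms $g_{1a}$, and here the naive move --- leafwise isothermal coordinates --- fails precisely because its $x^1$-dependence resurrects those cross terms, so the obstruction is genuine. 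The way through, which is the hard part of the proof, is to linearise the off-diagonal operator $(x^1,x^2,x^3)\mapsto(g_{12},g_{13},g_{23})$ around this approximate solution and show the (overdetermined) linearisation is solvable with tame estimates --- using that the integrability conditions of the system hold identically, so that it effectively behaves like a determined system --- and then close with an implicit-function/Nash--Moser argument, in function spaces adapted to the indefinite metric for the Lorentzian version.
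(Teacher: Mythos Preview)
The paper does not prove this theorem at all: it is quoted as a known result, attributed to \cite{deturck} in the Riemannian case and to \cite{kowalski} in the pseudo-Riemannian case, and then used as a black box to set up the local coordinates in Section~\ref{Deprol}. So there is no ``paper's own proof'' to compare your proposal against.

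That said, your outline is a fair summary of the strategy in the cited references. You correctly recast the problem as the existence of a local triply orthogonal system (equivalently, a closed $g^*$-orthogonal coframe with non-null legs), you note that the analytic case falls to Cartan--K\"ahler once involutivity is checked, and you correctly flag that the smooth case is the genuine content of \cite{deturck}, handled there by a hard implicit-function argument after the naive leafwise reduction fails. What you have written is an honest \emph{plan} rather than a proof: the involutivity check in the analytic case and, above all, the tame estimates and the structure of the linearised operator in the smooth case are precisely the substantive work, and you have (appropriately) only gestured at them. For the purposes of this paper that is more than enough, since the result is only being invoked, not reproved.
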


Let $M$ be a three-dimensional spacetime and $x\in M$. Let $(U, \varphi)$ be a chart of $M$ around $x$ for which $g$ is diagonal. Let
\[
\begin{array}{cccc}
	\varphi^{-1}:&\varphi(U)&\to& U\\
	& (x_1,x_2,x_3)&\mapsto & \varphi^{-1}(x_1,x_2,x_3).
\end{array}
\]

The matrix representation of $g$ in the chart $(U, \varphi)$ is 
\begin{equation*}
	\label{matrixdiag}
	G\big(\varphi^{-1}(x_1,x_2,x_3)\big) = 
\begin{pmatrix}
	g_{11}(x_1,x_2,x_3) & 0 & 0\\
	0 & g_{22}(x_1,x_2,x_3)  & 0\\
	0 & 0 & g_{33}(x_1,x_2,x_3) 
\end{pmatrix}
\end{equation*}
for some smooth functions $g_{11}, g_{22}, g_{33}\in\mathcal{C}^\infty\big(\varphi(U)\big)$. Since the metric is non-degenerate at every point, two of such functions will be always positive and one will be always negative. Thus, we can assume $g_{11},g_{22}>0$ and $g_{33}<0$, without loss of generality. In addition, the fact that the metric is diagonal in this chart implies that the coordinate vector fields $u_i :=T\varphi^{-1}(e_i)$ give the eigendirections of the metric at every point. This implies that, for any $y\in U$, the fibre of the bundle of future null cones is
\[
C^+_y = \{\lambda\big(\frac{\cos{\theta}}{\sqrt{g_{11}(\varphi^{-1}(y))}}u_1(y)+\frac{\sin\theta}{\sqrt{g_{22}(\varphi^{-1}(y))}}u_2(y) + \frac{1}{\sqrt{-g_{33}(\varphi^{-1}(y))}}u_3(y)\big)\ |\ \lambda\in\mathbb{R}^+ \text{ and }\theta\in[0,2\pi)\},
\]
and the fibre over $y$ of the projectivised bundle can be identified with
\[
\mathcal{P}C_y\cong C_y^{+u}:=\{\frac{\cos{\theta}}{\sqrt{g_{11}}}u_1+\frac{\sin\theta}{\sqrt{g_{22}}}u_2 + \frac{1}{\sqrt{-g_{33}}}u_3\ |\ \theta\in[0,2\pi)\},
\]
where we have dropped the points in order to simplify the notation. This discussion allows us to define local coordinates on $\mathcal{P}C$ via
\[
\begin{array}{cccc}
	\Psi^{-1}: & \varphi(U)\times (0,2\pi) & \to & \Psi^{-1}\big(\varphi(U)\times (0,2\pi)\big)\\
	&(x_1,x_2,x_3,\theta)  & \mapsto & \frac{\cos\theta}{\sqrt{g_{11}}}u_1 + \frac{\sin\theta}{\sqrt{g_{22}}}u_2+\frac{1}{\sqrt{-g_{33}}}u_3\in T_{\varphi^{-1}(x_1,x_2,x_3)}M.
\end{array}
\]

Let us denote by $\partial_{x_1}, \partial_{x_2}, \partial_{x_3}, \partial_{\theta}$ the coordinate vector fields defined by $\Psi^{-1}$.

\begin{Proposition}
	\label{PropKernel}
	Following with the previous notation, the kernel $\mathcal{W}$ of the Engel distribution on $\mathcal{P}C$ defined by the Lorentz prolongation is spanned on $\Psi^{-1}\big(\varphi(U)\times (0,2\pi)\big)$ by the vector field
	\[
	Z = \frac{\cos\theta}{\sqrt{g_{11}}}\partial_{x_1} + \frac{\sin\theta}{\sqrt{g_{22}}}\partial_{x_2}+\frac{1}{\sqrt{-g_{33}}}\partial_{x_3}+(A\sqrt{g_{11}}\cos\theta+B\sin\theta\sqrt{g_{22}}-C\sqrt{-g_{33}})\partial_\theta,
	\]
	where we define
		\[
	\begin{cases}
		A &= \frac{1}{2g_{11}\sqrt{g_{11}g_{22}}}\frac{\partial g_{11}}{\partial x_{2}}+\frac{\sin\theta}{2 g_{11}\sqrt{-g_{11}g_{33}}}\frac{\partial g_{11}}{\partial x_3}
		\\
		B &= -\frac{1}{2g_{22}\sqrt{g_{11}g_{22}}}\frac{\partial g_{22}}{\partial x_{1}}-\frac{\cos\theta}{2 g_{22}\sqrt{-g_{22}g_{33}}}\frac{\partial g_{22}}{\partial x_3}\\
		C & =-\frac{\sin\theta}{2g_{33}\sqrt{-g_{11}g_{33}}}\frac{\partial g_{33}}{\partial x_1}+\frac{\cos\theta}{2g_{33}\sqrt{-g_{22}g_{33}}}\frac{\partial g_{33}}{\partial x_2}.
	\end{cases}
	\]
\end{Proposition}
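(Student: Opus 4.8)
The plan is to compute the Engel distribution $\mathcal{D}$ on $\mathcal{P}C$ in the coordinates $(x_1,x_2,x_3,\theta)$ provided by $\Psi^{-1}$, then find the characteristic line field $\mathcal{W}$ explicitly. Recall from Example~\ref{Lorentzprol} that $\mathcal{D}_{(y,s)} = (T\pi_L)^{-1}(s)$, so $\mathcal{D}$ is spanned by $\partial_\theta$ together with a lift $\widetilde{X}$ of the null direction $X(\theta) = \frac{\cos\theta}{\sqrt{g_{11}}}u_1 + \frac{\sin\theta}{\sqrt{g_{22}}}u_2 + \frac{1}{\sqrt{-g_{33}}}u_3$; any such lift differs from another by a multiple of $\partial_\theta$, so I may take $\widetilde{X} = \frac{\cos\theta}{\sqrt{g_{11}}}\partial_{x_1} + \frac{\sin\theta}{\sqrt{g_{22}}}\partial_{x_2} + \frac{1}{\sqrt{-g_{33}}}\partial_{x_3}$. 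By the general theory in the Proposition preceding Example~\ref{Cartanprol}, $\mathcal{W}\subset\mathcal{D}$, so I look for a vector field of the form $Z = \widetilde{X} + h\,\partial_\theta$ for some function $h$ on $\varphi(U)\times(0,2\pi)$, and I must determine $h$ from the condition $[\mathcal{W},\mathcal{E}]\subseteq\mathcal{E}$, where $\mathcal{E} = [\mathcal{D},\mathcal{D}] = \langle \partial_\theta, \widetilde{X}, [\partial_\theta,\widetilde{X}]\rangle$.

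First I would compute $\dot{\widetilde{X}} := [\partial_\theta,\widetilde{X}] = -\frac{\sin\theta}{\sqrt{g_{11}}}\partial_{x_1} + \frac{\cos\theta}{\sqrt{g_{22}}}\partial_{x_2}$, noting the $x_3$-component drops out since $g_{33}$ and the others do not depend on $\theta$; this is nonzero, confirming the even-contact structure $\mathcal{E}$ has the claimed rank. Next I would characterize $\mathcal{W}$ as the kernel of $d\alpha|_{\mathcal{E}}$ where $\alpha$ is a local defining one-form for $\mathcal{E}$, or, more directly, as the unique line in $\mathcal{E}$ such that bracketing with $\mathcal{E}$ stays in $\mathcal{E}$. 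Concretely, since $\mathcal{W}$ is transverse to $\partial_\theta$ inside $\mathcal{E}$ (as remarked at the end of Example~\ref{Lorentzprol}, $[\partial_\theta,\dot{\widetilde X}]\notin\mathcal{E}$), I can normalize $\mathcal{W} = \langle Z\rangle$ with $Z = \widetilde{X} + h\,\partial_\theta$. The defining relation becomes: $[Z,\partial_\theta]$, $[Z,\widetilde{X}]$, and $[Z,\dot{\widetilde{X}}]$ must all lie in $\mathcal{E} = \langle\partial_\theta,\widetilde{X},\dot{\widetilde{X}}\rangle$. The first two are automatic given $\mathcal{E}$'s definition (up to checking $[Z,\widetilde X] = [\widetilde X + h\partial_\theta, \widetilde X] = h[\partial_\theta,\widetilde X] + (\widetilde X h)\partial_\theta \in\mathcal{E}$ trivially), so the genuine constraint is that the $\partial_{x_1},\partial_{x_2},\partial_{x_3}$-components of $[Z,\dot{\widetilde{X}}]$ be a linear combination of those of $\widetilde{X}$ and $\dot{\widetilde{X}}$; equivalently, the component of $[Z,\dot{\widetilde X}]$ along the "missing" base direction $[\partial_\theta,\dot{\widetilde X}]$ (a multiple of $-\cos\theta/\sqrt{g_{11}}\,\partial_{x_1} - \sin\theta/\sqrt{g_{22}}\,\partial_{x_2}$) must vanish. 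This yields one scalar equation for $h$.

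Solving that equation is where the bulk of the computation lies: I would expand $[Z,\dot{\widetilde{X}}]$ using the Leibniz rule for the Lie bracket, collecting the coefficients of $\partial_{x_1},\partial_{x_2},\partial_{x_3}$ in terms of the partial derivatives $\partial g_{ij}/\partial x_k$ and of $h$ and $\partial h/\partial\theta$. Projecting onto the direction complementary to $\langle\widetilde X,\dot{\widetilde X}\rangle$ inside the $x$-directions produces an identity of the form $h = A\sqrt{g_{11}}\cos\theta + B\sqrt{g_{22}}\sin\theta - C\sqrt{-g_{33}}$ with $A,B,C$ the stated combinations of $\partial g_{ii}/\partial x_j$; the three terms $A$, $B$, $C$ arise naturally as the contributions from differentiating $g_{11}$, $g_{22}$, $g_{33}$ respectively. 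The main obstacle is purely bookkeeping: keeping track of signs (the negative eigendirection introduces the $\sqrt{-g_{33}}$ factors and sign flips in the Christoffel-type terms) and correctly isolating the single relevant component, since the other components only fix the $\partial_\theta$-coefficient of the bracket, not $h$ itself. A useful cross-check is that when all $g_{ii}$ are constant (flat product metric), $A=B=C=0$ and $Z$ reduces to the obvious lift $\widetilde X$ with $h\equiv 0$, matching the Minkowski-type picture of Example~\ref{Minkowski}; and that the formula transforms correctly under permuting which coordinate is timelike. Finally I would remark that $\dot{\widetilde X}\neq 0$ and the convexity of the null circle (as in Example~\ref{Lorentzprol}) guarantee $\mathcal{D}$ is genuinely Engel on this chart, so $\mathcal{W}$ as computed is indeed the kernel of a bona fide even-contact structure, completing the proof.
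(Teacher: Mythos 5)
Your proposal is correct and follows essentially the same route as the paper: express $\mathcal{D} = \lspan X,\partial_\theta\rspan$ in the coordinates given by $\Psi^{-1}$, form $\dot X = [\partial_\theta,X]$ and $\mathcal{E} = \lspan X,\dot X,\partial_\theta\rspan$, and pin down the kernel inside $\mathcal{D}$ from the single nontrivial condition that the bracket with $\dot X$ stay in $\mathcal{E}$, which after the computation $[X,\dot X] = A\partial_{x_1}+B\partial_{x_2}+C\partial_{x_3}$ fixes the $\partial_\theta$-coefficient exactly as stated. Your normalization $Z = X + h\,\partial_\theta$, justified by the transversality of $\mathcal{W}$ to $\partial_\theta$, is only a mild streamlining of the paper's ansatz $\lambda X+\mu\partial_\theta$ (which leads to a $2\times 2$ linear system and a discussion of pathological cases), so the substance of the argument is identical.
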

\begin{proof}
	Recall that the  Engel structure $\mathcal{D}$ on $\mathcal{P}C$ defined by the Lorentz prolongation of $M$ is
	\[
	\mathcal{D}_{\Psi^{-1}(x_1,x_2,x_3,\theta)} = \mathcal{D}_{\frac{\cos\theta}{\sqrt{g_{11}}}u_1 + \frac{\sin\theta}{\sqrt{g_{22}}}u_2+\frac{1}{\sqrt{-g_{33}}}u_3} = (T\pi_L)^{-1}\Big(\lspan \frac{\cos\theta}{\sqrt{g_{11}}}u_1 + \frac{\sin\theta}{\sqrt{g_{22}}}u_2+\frac{1}{\sqrt{-g_{33}}}u_3\rspan\Big), 
	\]
	where $\pi_L:\mathcal{P}C\to M$ is the canonical projection. Let us compute
	\[
	T\pi_L(\partial_{x_1}) = \frac{d}{ds}\big|_{s = 0}\pi_L \circ \Psi^{-1}(x_1+s, x_2, x_3, \theta) = \frac{d}{ds}\big|_{s = 0}\varphi^{-1}(x_1+s, x_2, x_3) = u_1,
	\]
	and similarly for $T\pi_L(\partial_{x_2}) = u_2$ and $T\pi_L(\partial_{x_3}) = u_3$. In addition, 
	\[
	T\pi_L(\partial_\theta) = \frac{d}{ds}\big|_{s = 0}\pi_L \circ \Psi^{-1}(x_1, x_2, x_3, \theta +s) =  \frac{d}{ds}\big|_{s = 0}\varphi^{-1}(x_1, x_2, x_3)  = 0.
	\]
	
	Hence, by linearity, the rank-two distribution $\mathcal{D}$ is given pointwise by
	\[
	\mathcal{D} = \lspan X:=\frac{\cos\theta}{\sqrt{g_{11}}}\partial_{x_1} + \frac{\sin\theta}{\sqrt{g_{22}}}\partial_{x_2}+\frac{1}{\sqrt{-g_{33}}}\partial_{x_3}, \partial_\theta\rspan.
	\]
	
	If we define
	\[
	\dot{X} : = [\partial_\theta, X] = -\frac{\sin\theta}{\sqrt{g_{11}}}\partial_{x_1}+\frac{\cos\theta}{\sqrt{g_{22}}}\partial_{x_2},
	\]
	then the even-contact structure $\mathcal{E}$ on $\mathcal{P}C$ is 
	\[
	\mathcal{E} = \lspan X, \dot{X}, \partial_\theta\rspan.
	\]
	
	The next step is to compute the kernel $\mathcal{W}$ of the Engel structure. Let us first calculate
	\begin{align*}
		[X,\dot{X}] &= \Big[\frac{\cos\theta}{\sqrt{g_{11}}}\partial_{x_1} + \frac{\sin\theta}{\sqrt{g_{22}}}\partial_{x_2}+\frac{1}{\sqrt{-g_{33}}}\partial_{x_3},-\frac{\sin\theta}{\sqrt{g_{11}}}\partial_{x_1}+\frac{\cos\theta}{\sqrt{g_{22}}}\partial_{x_2} \Big]\\& = \cos^2\theta\Big[\frac{\partial_{x_1}}{\sqrt{g_{11}}}, \frac{\partial_{x_2}}{\sqrt{g_{22}}}\Big]-\sin^2\theta\Big[\frac{\partial_{x_2}}{\sqrt{g_{22}}}, \frac{\partial_{x_1}}{\sqrt{g_{11}}}\Big] -\sin\theta\Big[\frac{\partial_{x_3}}{\sqrt{-g_{33}}}, \frac{\partial_{x_1}}{\sqrt{g_{11}}}\Big]+\cos\theta\Big[\frac{\partial_{x_3}}{\sqrt{-g_{33}}}, \frac{\partial_{x_2}}{\sqrt{g_{22}}}\Big]\\& = \Big[\frac{\partial_{x_1}}{\sqrt{g_{11}}}, \frac{\partial_{x_2}}{\sqrt{g_{22}}}\Big]-\sin\theta\Big[\frac{\partial_{x_3}}{\sqrt{-g_{33}}}, \frac{\partial_{x_1}}{\sqrt{g_{11}}}\Big]+\cos\theta\Big[\frac{\partial_{x_3}}{\sqrt{-g_{33}}}, \frac{\partial_{x_2}}{\sqrt{g_{22}}}\Big]\\ &  = \frac{\partial }{\partial{x_1}}\Big(\frac{1}{\sqrt{g_{22}}}\Big)\frac{\partial_{x_2}}{\sqrt{g_{11}}} - \frac{\partial }{\partial{x_2}}\Big(\frac{1}{\sqrt{g_{11}}}\Big)\frac{\partial_{x_1}}{\sqrt{g_{22}}} -\sin\theta \Bigg(\frac{\partial}{\partial x_{3}}\Big(\frac{1}{\sqrt{g_{11}}}\Big)\frac{\partial_{x_1}}{\sqrt{-g_{33}}}- \frac{\partial}{\partial x_1}\Big(\frac{1}{\sqrt{-g_{33}}}\Big)\frac{\partial_{x_3}}{\sqrt{g_{11}}}\Bigg)\\&\phantom{=} + \cos\theta\Bigg(\frac{\partial}{\partial x_{3}}\Big(\frac{1}{\sqrt{g_{22}}}\Big)\frac{\partial_{x_2}}{\sqrt{-g_{33}}}- \frac{\partial}{\partial x_2}\Big(\frac{1}{\sqrt{-g_{33}}}\Big)\frac{\partial_{x_3}}{\sqrt{g_{22}}}\Bigg) \\ &= -\frac{1}{2g_{22}\sqrt{g_{11}g_{22}}}\frac{\partial g_{22}}{\partial x_{1}}\partial_{x_2} +\frac{1}{2g_{11}\sqrt{g_{11}g_{22}}}\frac{\partial g_{11}}{\partial x_{2}}\partial_{x_1} +\sin\theta\Bigg(\frac{1}{2 g_{11}\sqrt{-g_{11}g_{33}}}\frac{\partial g_{11}}{\partial x_3}\partial_{x_1}\\ &\phantom{=}-\frac{1}{2g_{33}\sqrt{-g_{11}g_{33}}}\frac{\partial g_{33}}{\partial x_1}\partial_{x_3}\Bigg)-\cos\theta\Bigg(\frac{1}{2 g_{22}\sqrt{-g_{22}g_{33}}}\frac{\partial g_{22}}{\partial x_3}\partial_{x_2}-\frac{1}{2g_{33}\sqrt{-g_{22}g_{33}}}\frac{\partial g_{33}}{\partial x_2}\partial_{x_3}\Bigg)\\ &= A\partial_{x_1}+B\partial_{x_2}+C\partial_{x_3},
	\end{align*}
	where we have defined
	\[
	\begin{cases}
	A &={} \frac{1}{2g_{11}\sqrt{g_{11}g_{22}}}\frac{\partial g_{11}}{\partial x_{2}}+\frac{\sin\theta}{2 g_{11}\sqrt{-g_{11}g_{33}}}\frac{\partial g_{11}}{\partial x_3}
	\\
	B &={} -\frac{1}{2g_{22}\sqrt{g_{11}g_{22}}}\frac{\partial g_{22}}{\partial x_{1}}-\frac{\cos\theta}{2 g_{22}\sqrt{-g_{22}g_{33}}}\frac{\partial g_{22}}{\partial x_3}\\
	C & ={}-\frac{\sin\theta}{2g_{33}\sqrt{-g_{11}g_{33}}}\frac{\partial g_{33}}{\partial x_1}+\frac{\cos\theta}{2g_{33}\sqrt{-g_{22}g_{33}}}\frac{\partial g_{33}}{\partial x_2}.
	\end{cases}
	\]
	
	Since we know that the kernel $\mathcal{W}$ lies within $\mathcal{D}$, there exist functions $\lambda, \mu\in\mathcal{C}^\infty(\mathcal{P}C)$ such that
	\[
	\mathcal{W} = \lspan \lambda X +\mu \partial_\theta\rspan.
	\]
	
	Let us compute
	\begin{align*}
		[\partial_{\theta}, \lambda X +\mu \partial_\theta] &= [\partial_\theta, \lambda X]+[\partial_\theta, \mu\partial_\theta] = \lambda\dot{X}+ \frac{\partial \lambda }{\partial\theta}X+\frac{\partial \mu}{\partial \theta}\partial_\theta \in\mathcal{E}
	\end{align*}
	regardless of $\lambda, \mu$. Similarly, 
	\[
	[X, \lambda X +\mu \partial_\theta] = [X, \lambda X]+[X, \mu\partial_\theta] = X(\lambda) X+X(\mu)\partial_\theta-\mu\dot{X} \in\mathcal{E}.
	\]
	
	Finally, 
	\begin{align*}
		[\dot{X}, \lambda X +\mu \partial_\theta] &= [\dot{X}, \lambda X]+[\dot{X}, \mu\partial_\theta] = \dot{X}(\lambda)X+\lambda[\dot{X}, X]+\dot{X}(\mu)\partial_\theta + \mu[\dot{X}, \partial_\theta] \\ & = \dot{X}(\lambda)X +\dot{X}(\mu)\partial_\theta-\lambda(A\partial_{x_1}+B\partial_{x_2}+C\partial_{x_3})-\mu\Big(-\frac{\cos\theta}{\sqrt{g_{11}}}\partial_{x_1}-\frac{\sin\theta}{\sqrt{g_{22}}}\partial_{x_2}\Big).
	\end{align*}
	
	Since $\dot{X}(\lambda)X+\dot{X}(\mu)\partial_\theta\in\mathcal{E}$, it is enough to impose that the last two terms belong to $\mathcal{E}$. Then, $-\lambda(A\partial_{x_1}+B\partial_{x_2}+C\partial_{x_3})-\mu\Big(-\frac{\cos\theta}{\sqrt{g_{11}}}\partial_{x_1}-\frac{\sin\theta}{\sqrt{g_{22}}}\partial_{x_2}\Big)\in \mathcal{E}$ if and only if
	\[
	-\lambda(A\partial_{x_1}+B\partial_{x_2}+C\partial_{x_3})-\mu\Big(-\frac{\cos\theta}{\sqrt{g_{11}}}\partial_{x_1}-\frac{\sin\theta}{\sqrt{g_{22}}}\partial_{x_2}\Big)+\lambda C\sqrt{-g_{33}}X\in \mathcal{E}. 
	\]
	
	The above vector field reads
	\[
	-\lambda\Bigg(\Big(A-C\cos\theta\sqrt{\frac{-g_{33}}{g_{11}}}\Big)\partial_{x_1}+\Big(B-C\sin\theta\sqrt{\frac{-g_{33}}{g_{22}}}\partial_{x_2}\Big)\Bigg)+\mu\Big(\frac{\cos\theta}{\sqrt{g_{11}}}\partial_{x_1}+\frac{\sin\theta}{\sqrt{g_{22}}}\partial_{x_2}\Big),
	\]
	and, except for isolated pathological cases, for it to belong to $\mathcal{E}$, it is enough to impose that it equals $\dot{X}$. This is equivalent to imposing 
	\[
	\begin{cases}
		\lambda \Big(A-C\cos\theta\sqrt{\frac{-g_{33}}{g_{11}}}\Big)-\mu\frac{\cos\theta}{\sqrt{g_{11}}} & ={}\frac{\sin\theta}{\sqrt{g_{11}}}\\
		\lambda\Big(B-C\sin\theta\sqrt{\frac{-g_{33}}{g_{22}}}\Big)-\mu\frac{\sin\theta}{\sqrt{g_{22}}} &={} -\frac{\cos{\theta}}{\sqrt{g_{22}}}.
	\end{cases}
	\]
	
	Whenever $\Big(A-C\cos\theta\sqrt{\frac{-g_{33}}{g_{11}}}\Big)\frac{\sin\theta}{\sqrt{g_{22}}}-\Big(B-C\sin\theta\sqrt{\frac{-g_{33}}{g_{22}}}\Big)\frac{\cos\theta}{\sqrt{g_{11}}}\neq 0$, the solution to the system is
	\[
\lambda = \frac{1}{A\sqrt{g_{11}}\sin\theta-B\sqrt{g_{22}}\cos\theta}\hspace{1cm}\text{ and }\hspace{1cm}\mu = \frac{A\sqrt{g_{11}}\cos\theta+B\sqrt{g_{22}}\sin\theta-C\sqrt{-g_{33}}}{A\sqrt{g_{11}}\sin\theta-B\sqrt{g_{22}}\cos\theta}.
	\]
	
	Hence, the kernel $\mathcal{W}$ is spanned by
	\[
	\mathcal{W} = \lspan \lambda X+\mu\partial_\theta\rspan = \lspan X+\frac{\mu}{\lambda}\partial_\theta\rspan = \lspan Z:=X+(A\sqrt{g_{11}}\cos\theta+B\sqrt{g_{22}}\sin\theta-C\sqrt{-g_{33}})\rspan.
	\]

	It can be seen that for the pathological cases for which $\Big(A-C\cos\theta\sqrt{\frac{-g_{33}}{g_{11}}}\Big)\frac{\sin\theta}{\sqrt{g_{22}}}-\Big(B-C\sin\theta\sqrt{\frac{-g_{33}}{g_{22}}}\Big)\frac{\cos\theta}{\sqrt{g_{11}}}$ vanishes, the kernel $\mathcal{W}$ is also spanned by $Z$. Hence, regardless of $A,B,C$ and $\theta$, we have
	\[
	\mathcal{W} = \lspan Z\rspan.
	\]
\end{proof}

\begin{Definition}
	\label{separable}
	A three-dimensional spacetime $(M, g)$ is said to be \textbf{separable} if, around every point $x\in M$, there exists a local chart $(U, \varphi)$ for which the matrix representation of $g$ is of the form 
	\[
		G\big(\varphi(x_1,x_2,x_3)\big) = 
	\begin{pmatrix}
		g_{11}(x_1,x_2,x_3) & 0 & 0\\
		0 & g_{22}(x_1,x_2,x_3)  & 0\\
		0 & 0 & g_{33}(x_1,x_2,x_3) 
	\end{pmatrix},
	\]
	where: 
	\begin{enumerate}
		\item $g_{11}, g_{22}>0$ and $g_{33}<0$,
		\item $g_{11}(x_1, x_2, x_3) = g_{11}(x_1, x_2)$ and $g_{22}(x_1, x_2, x_3) = g_{22}(x_1, x_2)$,
		\item $g_{33}(x_1,x_2,x_3) = g_{33}(x_3)$.
	\end{enumerate}
\end{Definition}
Note that we require the spatial components of the metric to be invariant under the flow of the negative eigendirection of $g$, and the time component of the metric to be invariant under the flow of any space-like vector field. Note that the spacetimes $(\S^2\times\S^1, g_c)$ studied in Section \ref{Model} are separable. For separable spacetimes, a straightforward computation using Proposition \ref{PropKernel} implies the following lemma.

\begin{Lemma}
	\label{lemmasep}
	Let $M$ be a separable three-dimensional spacetime. The kernel $\mathcal{W}$ on $\mathcal{P}C$ is spanned by
	\[
\mathcal{W} = \lspan Z = \frac{\cos\theta}{\sqrt{g_{11}}}\partial_{x_1}+\frac{\sin\theta}{\sqrt{g_{11}}}\partial_{x_2}+\frac{1}{\sqrt{-g_{33}}}\partial_{x_3} +\Big(\frac{\cos\theta}{2g_{11}\sqrt{g_{22}}}\frac{\partial g_{11}}{\partial{x_2}}-\frac{\sin\theta}{2g_{22}\sqrt{g_{11}}}\frac{\partial g_{22}}{\partial x_1}\Big)\partial_\theta\rspan.	
	\]
\end{Lemma}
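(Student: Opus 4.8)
The plan is to derive the expression for $Z$ directly from Proposition \ref{PropKernel} by imposing on the coefficients $A$, $B$, $C$ the three conditions defining a separable spacetime; condition (i) guarantees, exactly as in the setup of Proposition \ref{PropKernel}, that all the square roots appearing below are real and nowhere zero.

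First I would use condition (iii), that $g_{33}$ depends only on $x_3$: this makes $\partial g_{33}/\partial x_1$ and $\partial g_{33}/\partial x_2$ vanish, so that $C\equiv 0$. Then condition (ii), that $g_{11}$ and $g_{22}$ depend only on $x_1,x_2$, kills the term $\frac{\sin\theta}{2g_{11}\sqrt{-g_{11}g_{33}}}\frac{\partial g_{11}}{\partial x_3}$ of $A$ and the term $\frac{\cos\theta}{2g_{22}\sqrt{-g_{22}g_{33}}}\frac{\partial g_{22}}{\partial x_3}$ of $B$, leaving
\[
A = \frac{1}{2g_{11}\sqrt{g_{11}g_{22}}}\frac{\partial g_{11}}{\partial x_2},\qquad B = -\frac{1}{2g_{22}\sqrt{g_{11}g_{22}}}\frac{\partial g_{22}}{\partial x_1}.
\]
Substituting $C=0$ into the $\partial_\theta$-coefficient $A\sqrt{g_{11}}\cos\theta+B\sqrt{g_{22}}\sin\theta-C\sqrt{-g_{33}}$ of $Z$ from Proposition \ref{PropKernel} reduces it to $A\sqrt{g_{11}}\cos\theta+B\sqrt{g_{22}}\sin\theta$, and I would then perform the elementary cancellations $A\sqrt{g_{11}}=\frac{1}{2g_{11}\sqrt{g_{22}}}\frac{\partial g_{11}}{\partial x_2}$ and $B\sqrt{g_{22}}=-\frac{1}{2g_{22}\sqrt{g_{11}}}\frac{\partial g_{22}}{\partial x_1}$, using $\sqrt{g_{11}}/(g_{11}\sqrt{g_{11}g_{22}})=1/(g_{11}\sqrt{g_{22}})$ and the analogous identity for the other term. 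This produces exactly the $\partial_\theta$-component in the statement, while the first three summands of $Z$ coincide with those in Proposition \ref{PropKernel}.

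The only point worth flagging is that no separate treatment of degenerate loci is required: the proof of Proposition \ref{PropKernel} already establishes $\mathcal{W}=\lspan Z\rspan$ for every value of $A$, $B$, $C$ and $\theta$, including the points where the auxiliary linear system there becomes singular. Consequently, once the coefficients have been simplified as above, the asserted description of $\mathcal{W}$ on $\Psi^{-1}\big(\varphi(U)\times(0,2\pi)\big)$ is immediate, and the computation is otherwise purely routine bookkeeping, so I do not expect any genuine obstacle.
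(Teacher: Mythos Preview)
Your proposal is correct and follows exactly the approach the paper indicates: it states only that the lemma follows by ``a straightforward computation using Proposition~\ref{PropKernel}'', which is precisely the specialization of $A$, $B$, $C$ under the separability hypotheses that you carry out. Your remark that no separate treatment of degenerate loci is needed is also in line with the paper, since Proposition~\ref{PropKernel} already covers all cases.
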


We can now prove the most important result of the current section.

\begin{Theorem}
	\label{RecoverManifold}
	Let $(M,g)$ be a separable spacetime. Then
	\[
	\mathcal{N} = \mathcal{P}C/ \mathcal{W}.
	\]
\end{Theorem}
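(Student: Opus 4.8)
The plan is to exhibit both $\mathcal{N}$ and $\mathcal{P}C/\mathcal{W}$ as the leaf space of one and the same one-dimensional foliation of $\mathcal{P}C$ — the foliation whose leaves are the tautological lifts $s\mapsto\big(\gamma(s),[\dot\gamma(s)]\big)$ of the null geodesics $\gamma$ of $M$ — and then to recognise that foliation as $\mathcal{W}$ by means of the explicit generator $Z$ computed in Lemma \ref{lemmasep}.

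First I would unwind the left-hand side. By the discussion in Section \ref{SpaceOfNullG}, $\mathcal{N} = C^+/\mathcal{D}$ with $\mathcal{D} = \lspan\Delta, X_g\rspan$, while $\mathcal{P}C$ is canonically $C^+$ with the rays of the Euler field collapsed, i.e. $\mathcal{P}C = C^+/\lspan\Delta\rspan$ with projection $p_0\colon C^+\to\mathcal{P}C$, since a future-null ray in $C^+_x$ is the same datum as a line in $C_x$. Because rescaling a geodesic velocity by the Euler flow again yields a geodesic velocity, the flow of $\Delta$ preserves the line field $\lspan X_g\rspan$; together with the identity $[X_g,\Delta]=X_g$ recalled in Section \ref{SpaceOfNullG}, this shows that $\mathcal{D}$ is $\lspan\Delta\rspan$-invariant and involutive, hence descends to a one-dimensional foliation $\bar{\mathcal{D}} = \lspan (p_0)_*X_g\rspan$ on $\mathcal{P}C$, and $\mathcal{N} = C^+/\mathcal{D} = \mathcal{P}C/\bar{\mathcal{D}}$. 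Equivalently, and more concretely: sending $(x,s)\in\mathcal{P}C$ to the image of the future null geodesic through $x$ in the direction $s$ is a well-defined surjection onto $\mathcal{N}$, and its fibre over $(x,s)$ is precisely the tautological lift of that geodesic; these lifts are the leaves of $\bar{\mathcal{D}}$. Thus it suffices to prove $\bar{\mathcal{D}} = \mathcal{W}$.

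Since $\mathcal{W}$ is a line field, it is enough to check that every tautological lift of a null geodesic is tangent to $\mathcal{W}$; uniqueness of integral curves then forces the lift through a point to agree with the $\mathcal{W}$-leaf through it. I would verify this in a separable chart $(U,\varphi)$ with the induced coordinates $(x_1,x_2,x_3,\theta)$ on $\mathcal{P}C$. Let $c(s) = \Psi^{-1}\big(x_1(s),x_2(s),x_3(s),\theta(s)\big)$ be an integral curve of the generator $Z$ of Lemma \ref{lemmasep}; then $(\dot x_1,\dot x_2,\dot x_3) = \big(g_{11}^{-1/2}\cos\theta,\ g_{22}^{-1/2}\sin\theta,\ (-g_{33})^{-1/2}\big)$ and $\dot\theta$ is the explicit function given there. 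The base curve $\gamma = \pi_L\circ c$ satisfies $\dot\gamma = \dot x_1 u_1+\dot x_2 u_2+\dot x_3 u_3$, so $g(\dot\gamma,\dot\gamma) = \cos^2\theta+\sin^2\theta-1 = 0$ and $\gamma$ is a future-pointing null curve; it remains to verify the geodesic equations \eqref{GeodesicChart}. Separability makes the Christoffel symbols sparse: every symbol with a mixed spatial--temporal index triple vanishes because $g_{11},g_{22}$ are independent of $x_3$ and $g_{33}$ of $x_1,x_2$; the $k=3$ equation is then $\ddot x_3 + \Gamma^3_{33}\dot x_3^2 = 0$, which holds for $\dot x_3 = (-g_{33}(x_3))^{-1/2}$ because $\Gamma^3_{33} = g_{33}'/(2g_{33})$, while the $k=1$ and $k=2$ equations involve only $\Gamma^1_{11},\Gamma^1_{12},\Gamma^1_{22}$ and $\Gamma^2_{11},\Gamma^2_{12},\Gamma^2_{22}$. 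Substituting $\dot x_1,\dot x_2$ and expanding $\ddot x_1$ via $\dot g_{11} = \frac{\partial g_{11}}{\partial x_1}\dot x_1+\frac{\partial g_{11}}{\partial x_2}\dot x_2$, the terms in $\frac{\partial g_{11}}{\partial x_1}$ cancel and what survives is exactly the relation $\dot\theta = \frac{\cos\theta}{2g_{11}\sqrt{g_{22}}}\frac{\partial g_{11}}{\partial x_2} - \frac{\sin\theta}{2g_{22}\sqrt{g_{11}}}\frac{\partial g_{22}}{\partial x_1}$, i.e. the $\partial_\theta$-coefficient of $Z$; the $k=2$ equation collapses to the same relation using $g_{11}\dot x_1^2+g_{22}\dot x_2^2=1$, so there is no overdetermination, and $\gamma$ is indeed a geodesic.

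Putting this together, the leaves of $\mathcal{W}$ are exactly the tautological lifts of the null geodesics of $M$, hence the fibres of the surjection $\mathcal{P}C\to\mathcal{N}$, and therefore $\mathcal{N} = \mathcal{P}C/\mathcal{W}$ as required. The only genuinely computational — and so the only delicate — step is the last one: matching the $\partial_\theta$-component of $Z$ against the reparametrised spatial geodesic equations. Everything else is bookkeeping about which quotient is which and about which Christoffel symbols survive separability.
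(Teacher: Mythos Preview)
Your proposal is correct and follows essentially the same route as the paper: both arguments identify $\mathcal{N}$ with $\mathcal{P}C/\bar{\mathcal{D}}$ via $\mathcal{P}C = C^+/\lspan\Delta\rspan$, then verify in a separable chart that the integral curves of the generator $Z$ from Lemma~\ref{lemmasep} project under $\pi_L$ to null geodesics by checking the geodesic equations~\eqref{GeodesicChart} for $x_1,x_2,x_3$ using the sparsity of the Christoffel symbols forced by separability. Your remark that the $k=1$ and $k=2$ equations both collapse to the same $\dot\theta$-relation, with consistency guaranteed by $g_{11}\dot x_1^2+g_{22}\dot x_2^2=1$, is in fact a slightly more careful observation than the paper's, which simply asserts that the $x_2$-computation goes through analogously.
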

\begin{proof}
	Let $x\in M$ and let $(U,\varphi)$ be  a coordinate chart around $x$ satisfying the conditions of the definition of separable manifold. Let
	\[
	\begin{array}{cccc}
		\Psi^{-1}: & \varphi(U)\times (0,2\pi) & \to & \Psi^{-1}\big(\varphi(U)\times (0,2\pi)\big)\\
		&(x_1,x_2,x_3,\theta)  & \mapsto & \frac{\cos\theta}{\sqrt{g_{11}}}u_1 + \frac{\sin\theta}{\sqrt{g_{22}}}u_2+\frac{1}{\sqrt{-g_{33}}}u_3\in T_{\varphi^{-1}(x_1,x_2,x_3)}M.
	\end{array}
	\]
	be coordinates on $\mathcal{P}C$. By Lemma \ref{lemmasep}, we have 
	\[
\mathcal{W} = \lspan Z = \frac{\cos\theta}{\sqrt{g_{11}}}\partial_{x_1}+\frac{\sin\theta}{\sqrt{g_{11}}}\partial_{x_2}+\frac{1}{\sqrt{-g_{33}}}\partial_{x_3} +\Big(\frac{\cos\theta}{2g_{11}\sqrt{g_{22}}}\frac{\partial g_{11}}{\partial{x_2}}-\frac{\sin\theta}{2g_{22}\sqrt{g_{11}}}\frac{\partial g_{22}}{\partial x_1}\Big)\partial_\theta\rspan.	
\]
	
	Let us compute the integral lines of the vector field $Z$ on $\mathcal{P}C$. These integral lines are curves $\gamma(s) = \Psi^{-1}\Big(x_1(s),x_2(s), x_3(s), \theta(s) \Big)$ with
	\[
	\begin{cases}
		x_1'(s) &=\  \frac{\cos\theta}{\sqrt{g_{11}}}\\
		x_2'(s) &=\ \frac{\sin\theta}{\sqrt{g_{22}}} \\
		x_3'(s) & = \ \frac{1}{\sqrt{-g_{33}}}\\
		\theta'(s)  &=\ \frac{\cos\theta}{2g_{11}\sqrt{g_{22}}}\frac{\partial g_{11}}{\partial x_{2}}-\frac{\sin\theta}{2g_{22}\sqrt{g_{11}}}\frac{\partial g_{22}}{\partial x_{1}}
	\end{cases}
	\]
for $s\in(-\varepsilon, \varepsilon)$ for some $\varepsilon>0$.
	 Note that
	\begin{align*}
		x_1''(s) =& -\frac{\sin\theta}{\sqrt{g_{11}}}\Big(\frac{\cos\theta}{2g_{11}\sqrt{g_{22}}}\frac{\partial g_{11}}{\partial x_{2}}-\frac{\sin\theta}{2g_{22}\sqrt{g_{11}}}\frac{\partial g_{22}}{\partial x_{1}}\Big)+\cos\theta\Big(-\frac{1}{2g_{11}\sqrt{g_{11}}}\frac{\partial g_{11}}{\partial x_1}x'_1-\frac{1}{2g_{11}\sqrt{g_{11}}}\frac{\partial g_{11}}{\partial x_2}x'_2\Big)  \\ &= -\frac{1}{2 g_{11}}\frac{\partial g_{11}}{\partial x_2}x'_1x_2'+\frac{1}{2g_{11}}\frac{\partial g_{22}}{\partial x_1}(x'_2)^2-\frac{1}{2g_{11}}\frac{\partial g_{11}}{\partial x_1}(x'_1)^2-\frac{1}{2g_{11}}\frac{\partial g_{11}}{\partial x_2}x_1'x_2' \\ &=-\Gamma^1_{11}(x_1')^2 -2\Gamma_{12}^1x_1'x_2'-\Gamma_{22}^1(x_2')^2,
	\end{align*}
	which is precisely the geodesic equation in $M$ for $x_1$ under the assumptions that $\frac{\partial g_{11}}{\partial x_3} = \frac{\partial g_{22}}{\partial x_3} =\frac{\partial g_{33}}{\partial x_1} = \frac{\partial g_{33}}{\partial x_2}= 0$. It is clear that the same happens for $x_2$. Now, for the third coordinate,
	\begin{align*}
		x_3''(s) &= -\frac{1}{2 g_{33}\sqrt{-g_{33}}}\Big(\frac{\partial  g_{33}}{\partial x_3}x_3'\Big) = -(x_3')^2\big(-\frac{1}{2g_{33}}\frac{\partial g_{33}}{\partial x_3}\big) = -(x_3')^2\Gamma_{33}^3,
	\end{align*}
	and we also obtain the geodesic equation for $x_3$. Thus, if $\gamma:(-\varepsilon, \varepsilon)\to\Psi^{-1}\big(\varphi(U)\times (0,2\pi)\big)$ is an integral line of $Z$, the projection $\mu:=\pi_L\circ\gamma$ is a geodesic on $M$. Note that $\mu(s) = \varphi\big(x_1(s), x_2(s), x_3(s)\big)$ implies that
	\[
	\dot{\mu}(s) = x_1'u_1+x_2'u_2+x_3'u_3 = \frac{\cos\theta}{\sqrt{g_{11}}}u_1+\frac{\sin\theta}{\sqrt{g_{22}}}u_2+\frac{1}{\sqrt{-g_{33}}}u_3 = \gamma(s),
	\]
	and hence the integral lines of $Z$ are precisely the tangent vectors to the light geodesics of $M$. Since $x\in M$ is arbitrary and we can define an alternative parametrisation $\tilde{\Psi}^{-1}:\varphi(U)\times(-\pi, \pi)\to\tilde{\Psi}^{-1}(\varphi(U)\times(-\pi, \pi)),$ it is true globally that the integral lines of the kernel $\mathcal{W}$ are the tangent vectors to the light geodesics in $M$. Hence, the kernel is spanned by the restriction of the geodesic spray $X_g$ on $TM$ to the bundle of cones. Since clearly
	\[
	\mathcal{P}C = C^+/\Delta,
	\]
	where $\Delta$ is the Euler field, we have
	\[
	\mathcal{P}C/\mathcal{W} = \mathlarger{\mathlarger{ \sfrac{C^+/\Delta}{X_g}}} = \mathcal{N},
	\]
	as claimed.
\end{proof}

Hence, we can recover the space of light geodesics $\mathcal{N}$ of a separable spacetime $M$ from its Lorentz prolongation. The following results show that we can also recover the canonical contact structure on $\mathcal{N}$. Since the kernel $\mathcal{W}$ is always transverse to the coordinate $\partial_\theta$, the following proposition follows immediately.

\begin{Proposition}
	Let $x \in M$. Then, the sky of $x$ is
	\[
	\mathfrak{S}_x = \{p\big(\frac{\cos\theta}{\sqrt{g_{11}}}u_1+\frac{\sin\theta}{\sqrt{g_{22}}}u_2+\frac{1}{\sqrt{-g_{33}}}u_3 \big)\ |\ \theta\in[0,2\pi)\}, 
	\]
	where $p:\mathcal{P}C\to\mathcal{P}C/\mathcal{W}$ is the canonical projection.
	\label{Recoverctc}
\end{Proposition}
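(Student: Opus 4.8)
The plan is to read off the result from the explicit description of the fibres of $\pi_L:\mathcal{P}C\to M$ together with the identification $\mathcal{N}=\mathcal{P}C/\mathcal{W}$ obtained in Theorem \ref{RecoverManifold}. In a diagonalising chart $(U,\varphi)$ around $x$ we have already recorded that the fibre of $\pi_L$ over $x$ is
\[
\mathcal{P}C_x\cong C_x^{+u}=\Big\{\frac{\cos\theta}{\sqrt{g_{11}}}u_1+\frac{\sin\theta}{\sqrt{g_{22}}}u_2+\frac{1}{\sqrt{-g_{33}}}u_3\ \Big|\ \theta\in[0,2\pi)\Big\},
\]
so the set appearing on the right-hand side of the statement is exactly $p(\mathcal{P}C_x)$. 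Hence it suffices to prove the identity $\mathfrak{S}_x=p(\mathcal{P}C_x)$.

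First I would make the geometric meaning of $p$ precise. In the proof of Theorem \ref{RecoverManifold} it was shown that the integral lines of $\mathcal{W}$ are exactly the curves $s\mapsto\dot{\mu}(s)$ swept out by the tangent vectors of maximal null geodesics $\mu$ of $M$; equivalently, under the identification $\mathcal{P}C=C^+/\Delta$ the line field $\mathcal{W}$ is the projection of the geodesic spray, so that $p$ is the map through which the canonical projection $\pi_{\mathcal{N}}:C^+\to\mathcal{N}$ factors. Consequently $p$ sends a point $v\in\mathcal{P}C$ --- a future null direction based at $\pi_L(v)$ --- to the unique unparametrised null geodesic of $M$ tangent to $v$ at $\pi_L(v)$.

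With this description the two inclusions are immediate. If $v\in\mathcal{P}C_x$ then $\pi_L(v)=x$, so the null geodesic $p(v)$ passes through $x$ and therefore $p(v)\in\mathfrak{S}_x$; conversely, any $\gamma\in\mathfrak{S}_x$ contains $x$ and hence is tangent at $x$ to some future null direction $v\in\mathcal{P}C_x$, for which $p(v)=\gamma$. This yields $\mathfrak{S}_x=p(\mathcal{P}C_x)$, which is the claimed set. Finally, the transversality of $\mathcal{W}$ to $\partial_\theta$ observed in the text ensures that $p$ is injective on the fibre $\mathcal{P}C_x$, so that the parameter $\theta\in[0,2\pi)$ descends to a genuine circle parametrisation of the sky; this refinement explains why the statement follows ``immediately'' but is not needed for the set-theoretic equality itself.

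I do not anticipate any real obstacle here: the only delicate point is invoking, from the proof of Theorem \ref{RecoverManifold}, the precise correspondence ``point of $\mathcal{P}C$ $\leftrightarrow$ null geodesic through its basepoint'', after which the proposition is just a matter of unwinding the definition of the sky. If one wanted a fully self-contained argument, one could instead re-derive this correspondence by integrating the vector field $Z$ of Lemma \ref{lemmasep} with initial condition in $\mathcal{P}C_x$ and checking, exactly as in that proof, that $\pi_L\circ\gamma$ is the null geodesic with the prescribed initial direction.
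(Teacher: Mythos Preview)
Your proposal is correct and follows the same approach as the paper: the paper simply remarks that the proposition follows immediately from Theorem \ref{RecoverManifold} together with the transversality of $\mathcal{W}$ to $\partial_\theta$, and you have spelled out exactly this argument by identifying the right-hand side with $p(\mathcal{P}C_x)$ and reading off $\mathfrak{S}_x=p(\mathcal{P}C_x)$ from the description of $p$. Your added observation that transversality gives injectivity of $p$ on the fibre (hence a genuine $\S^1$-parametrisation) is precisely the content the paper compresses into the word ``immediately.''
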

\begin{Theorem}
	\label{Bigthree}
	Let $M$ be a separable spacetime. Assume that the kernel $\mathcal{W}$ in $\mathcal{P}C$ is nice, that is, $\mathcal{P}C/ \mathcal{W}$ is a differentiable manifold and the canonical projection
	\[
	p:\mathcal{P}C\to\mathcal{P}C/\mathcal{W}
	\]
	is a submersion. Then, the canonical contact structure on $\mathcal{N} = \mathcal{P}C/\mathcal{W}$ is
	\[
	\mathcal{H} = p_*\mathcal{E}.
	\]
\end{Theorem}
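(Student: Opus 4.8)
The plan is to prove the equality of distributions $\mathcal{H}=p_*\mathcal{E}$ pointwise, by comparing, at each $\gamma\in\mathcal{N}$, the fibre $(p_*\mathcal{E})_\gamma$ with the two tangent lines to skies whose sum defines $\mathcal{H}_\gamma$. The only inputs needed are the algebraic relation $\mathcal{W}\subset\mathcal{D}\subset\mathcal{E}$ from Section \ref{EngelSection}, the identification $\mathcal{N}=\mathcal{P}C/\mathcal{W}$ of Theorem \ref{RecoverManifold}, and the explicit description of skies in Proposition \ref{Recoverctc}.

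First I would check that $p_*\mathcal{E}$ is a well-defined rank-two distribution on $\mathcal{N}=\mathcal{P}C/\mathcal{W}$. By the niceness hypothesis $\ker(T_vp)=\mathcal{W}_v$ for every $v$, and since $\mathcal{W}\subseteq\mathcal{E}$ the image $T_vp(\mathcal{E}_v)$ has dimension $3-1=2$. For independence of the choice of $v$ in a fibre of $p$, I would use that $\mathcal{W}$ is the kernel of the even-contact structure $\mathcal{E}$, so $[\mathcal{W},\mathcal{E}]\subseteq\mathcal{E}$; choosing a local spanning field $Z$ of $\mathcal{W}$ (the one given by Lemma \ref{lemmasep}), the relation $[Z,Y]\in\mathcal{E}$ for all sections $Y\in\mathcal{E}$ implies, via the standard linear ODE argument in a local frame of $\mathcal{E}$, that the flow of $Z$ preserves $\mathcal{E}$. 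Since the fibres of $p$ are exactly the integral curves of $\mathcal{W}$, the spaces $T_vp(\mathcal{E}_v)$ for $v\in p^{-1}(\gamma)$ all coincide, so $(p_*\mathcal{E})_\gamma:=T_vp(\mathcal{E}_v)$ makes sense.

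Next I would express the tangent space of a sky in these terms. Fix $\gamma\in\mathcal{N}$ and $x\in M$ with $x\in\gamma$, and take $v\in\pi_L^{-1}(x)\cap p^{-1}(\gamma)$ (the projectivised null direction of $\gamma$ at $x$). In the coordinates $\Psi^{-1}$ of Section \ref{Deprol}, $\pi_L^{-1}(x)$ is the circle $\theta\mapsto\Psi^{-1}(x_1,x_2,x_3,\theta)$ with tangent $\partial_\theta$, and by Proposition \ref{Recoverctc} we have $\mathfrak{S}_x=p(\pi_L^{-1}(x))$. Because the field $Z$ of Lemma \ref{lemmasep} has the nowhere-vanishing $\partial_{x_3}$-component $\tfrac{1}{\sqrt{-g_{33}}}$, it is never proportional to $\partial_\theta$, i.e. $\partial_\theta\notin\mathcal{W}$; hence $p|_{\pi_L^{-1}(x)}$ is an immersion at $v$, and $T_\gamma\mathfrak{S}_x=\lspan T_vp(\partial_\theta|_v)\rspan$. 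Since $\partial_\theta\in\mathcal{E}$, this line lies in $T_vp(\mathcal{E}_v)=(p_*\mathcal{E})_\gamma$.

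Finally I would assemble the argument. By \cite[Sec. 2.4]{bautista} one can choose $x,y\in\gamma$ close enough that $T_\gamma\mathfrak{S}_x\cap T_\gamma\mathfrak{S}_y=\{0\}$, so $\mathcal{H}_\gamma=T_\gamma\mathfrak{S}_x\oplus T_\gamma\mathfrak{S}_y$. By the proof of Theorem \ref{RecoverManifold}, the integral curve of $\mathcal{W}$ through $v$ projects under $\pi_L$ onto the geodesic $\gamma\subseteq M$, so it meets $\pi_L^{-1}(y)$ at a point $w$ with $p(w)=\gamma$; the previous paragraph (applied at $w,y$) gives $T_\gamma\mathfrak{S}_y=\lspan T_wp(\partial_\theta|_w)\rspan\subseteq T_wp(\mathcal{E}_w)=(p_*\mathcal{E})_\gamma$ by well-definedness. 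Hence $\mathcal{H}_\gamma=T_\gamma\mathfrak{S}_x\oplus T_\gamma\mathfrak{S}_y\subseteq(p_*\mathcal{E})_\gamma$, and since both sides are two-dimensional they are equal; as $\gamma$ was arbitrary, $\mathcal{H}=p_*\mathcal{E}$. I expect the main obstacle to be not any single computation but the bookkeeping: establishing cleanly that $\mathcal{E}$ is invariant under the characteristic flow of $\mathcal{W}$ (so that $p_*\mathcal{E}$ is well-defined), and keeping track of the fact that the convenient descriptions of $\mathcal{P}C$, of $\mathcal{W}$, and of the skies live only in the separable charts, so that every identification must be made locally around $v$ and $w$ and then seen to be chart-independent.
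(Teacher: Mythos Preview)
Your proposal is correct and follows essentially the same strategy as the paper: establish that $p_*\mathcal{E}$ is a well-defined rank-two distribution (via flow-invariance of $\mathcal{E}$ under $\mathcal{W}$), identify each sky tangent line as $T_vp(\lspan\partial_\theta\rspan)\subset p_*\mathcal{E}$ using Proposition~\ref{Recoverctc}, and then compare dimensions.

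The only notable difference is in the final assembly. You argue directly: once both $T_\gamma\mathfrak{S}_x$ and $T_\gamma\mathfrak{S}_y$ are shown to lie in the two-plane $(p_*\mathcal{E})_\gamma$ and are transverse, the equality $\mathcal{H}_\gamma=(p_*\mathcal{E})_\gamma$ follows by dimension. The paper instead pushes $\partial_\theta|_{\dot\mu(s)}$ back to $\dot\mu(0)$ by the flow $\Phi^Z_{-s}$, writes $\mathcal{H}_\gamma=T_{\dot\mu(0)}p\big(\lspan\partial_\theta,\,T_{\dot\mu(s)}\Phi^Z_{-s}(\partial_\theta)\rspan\big)$, and then takes the limit $s\to 0$ to replace the second generator by $[\partial_\theta,Z]=\dot X+(\text{function})\partial_\theta$; only after this explicit bracket computation does it conclude that $\lspan\partial_\theta,\dot X,Z\rspan=\lspan\partial_\theta,\dot X,X\rspan=\mathcal{E}$. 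Your route is shorter and avoids the limit step entirely; the paper's route has the minor advantage of exhibiting concretely which element of $\mathcal{E}$ (namely $\dot X$) provides the second direction of $\mathcal{H}_\gamma$.
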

\begin{proof}
	Firstly, as argued in \cite[p. 246]{adachi}, the even-contact structure $\mathcal{E}$ is invariant under the flow of any vector field generating $\mathcal{W}$. Therefore, the pushforward $p_*\mathcal{E}$ is well defined.

	Let $\gamma\in\mathcal{N} = \mathcal{P}C/ \mathcal{W}$. Then, $\gamma$ is defined by a curve $\mu: (-\varepsilon, \varepsilon)\to M$ which is a null geodesic in $M$, and 
	\[
	\gamma = p\big(\dot{\mu}(s)\big)
	\]
	for all $s\in(-\varepsilon, \varepsilon)$. Let $q_0 = \mu(0)$ and define coordinates $\varphi^{-1}:(x_1,x_2,x_3)\to\varphi^{-1}(x_1,x_2,x_3)$ around $q_0$ as defined in the proof of Theorem \ref{RecoverManifold}. Let also $\Psi^{-1}:(x_1,x_2,x_3, \theta)\to \Psi^{-1}(x_1,x_2,x_3, \theta)$ be coordinates around $\dot{\mu}(0)$ in $\mathcal{P}C$ as defined at the beginning of the current section. Now, for all $s\in[0,\varepsilon)$ small enough, the point $q_s: = \mu(s)$ lies in the image of $\varphi^{-1}$ and $\dot{\mu}(s)$ lies in the image of $\Psi^{-1}$.  Hence, we can define, for $s$ small enough, $\dot{\mu}(s) = \Psi^{-1}\big(x_1(s), x_2(s), x_3(s), \theta(s)\big)$. Also, if $s>0$ and $s$ is small enough, the points $q_0$ and $q_s$ are not conjugate in $M$.
	
	By the previous result, we have
	\[
	\mathfrak{S}_{q_0} = \{p\circ\Psi^{-1}\big(x_1(0), x_2(0), x_3(0), \theta\big)\ |\ \theta\in \big(\theta(0)-\pi, \theta(0)+\pi\big]\}
	\]
	and
	\[
	\mathfrak{S}_{q_s} = \{p\circ\Psi^{-1}\big(x_1(s), x_2(s), x_3(s), \theta\big)\ |\ \theta\in \big(\theta(s)-\pi, \theta(s)+\pi\big]\}.
	\]
	
	Hence, 
	\[
	T_\gamma \mathfrak{S}_{q_0} = T_{\dot{\mu}(0)}p(\lspan \partial_\theta\rspan)
	\]
	and
	\[
	T_\gamma \mathfrak{S}_{q_s} = T_{\dot{\mu}(s)}p(\lspan \partial_\theta\rspan ).
	\]
	
	Fix now $s>0$ small enough and let, for some open neighbourhood $V$ of $\dot{\mu}(s)$,
	\[
	\Phi_{-s}^Z: V \to  \Phi_{-s}^Z(V)
	\]
	be the flow at time $-s$ of the vector field 
	\[
	Z =  X + (A\sqrt{g_{11}}\cos\theta+B\sqrt{g_{22}}\sin\theta)\partial_\theta
	\]
	that generates the kernel $\mathcal{W}$. If we take $V$ small enough, then $\Phi^Z_{-s}$ is a diffeomorphism. In addition, $p\circ\Phi^Z_{-s} = p$. Hence, since $\Phi^Z_{-s}\big(\dot{\mu}(s)\big) = \dot{\mu}(0)$, we can compute
	\[
	T_\gamma\mathfrak{S}_{q_s} = T_{\dot{\mu}(s)}p(\partial_\theta) = T_{\dot{\mu}(0)}p\circ T_{\dot{\mu}(s)}\Phi^{Z}_{-s}(\partial_\theta)
	\]
	and
	\begin{align*}
		\mathcal{H}_{\gamma} &= T_\gamma \mathfrak{S}_{q_0} \oplus T_\gamma \mathfrak{S}_{q_s} = T_{\dot{\mu}(0)}p\big(\lspan \partial_\theta,  T_{\dot{\mu}(s)}\Phi^{Z}_{-s}(\partial_\theta) \rspan\big) = T_{\dot{\mu}(0)}p\big(\lspan \partial_\theta,  T_{\dot{\mu}(s)}\Phi^{Z}_{-s}(\partial_\theta)-\partial_\theta \rspan\big)  \\ &= T_{\dot{\mu}(0)}p\big(\lspan \partial_\theta, \frac{ T_{\dot{\mu}(s)}\Phi^{Z}_{-s}(\partial_\theta)-\partial_\theta}{s} \rspan\big)
	\end{align*}
	for all $s>0$ small enough. Hence, the result is still true if we take the limit $s\to 0$. Thus, we obtain
	\[
	\mathcal{H}_{\gamma}  = T_{\dot{\mu}(0)}p\big(\lspan \partial_\theta, \lim\limits_{s\to 0}\frac{ T_{\dot{\mu}(s)}\Phi^{Z}_{-s}(\partial_\theta)-\partial_\theta}{s} \rspan\big) =  T_{\dot{\mu}(0)}p\big(\lspan\partial_\theta,  [\partial_\theta, Z]\rspan\big).
	\]
	
	We can compute
	\begin{align*}
		[\partial_\theta, Z] = [\partial_\theta, X] + [\partial_\theta, (A\sqrt{g_{11}}\cos\theta+B\sqrt{g_{22}}\sin\theta)\partial_\theta] = \dot{X}+(-A\sqrt{g_{11}}\sin\theta+B\sqrt{g_{22}}\cos\theta )\partial_\theta.
	\end{align*}
	
	We obtain
	\begin{align*}
		\mathcal{H}_{\gamma}  &= T_{\dot{\mu}(0)}p\big(\lspan\partial_\theta,  [\partial_\theta, Z]\rspan\big) = T_{\dot{\mu}(0)}p\big(\lspan\partial_\theta,  \dot{X}+(-A\sqrt{g_{11}}\sin\theta+B\sqrt{g_{22}}\cos\theta )\partial_\theta\rspan\big) \\ & = T_{\dot{\mu}(0)}p\big(\lspan\partial_\theta,  \dot{X} \rspan\big) =  T_{\dot{\mu}(0)}p\big(\lspan\partial_\theta,  \dot{X}, Z \rspan\big)  = T_{\dot{\mu}(0)}p\big(\lspan\partial_\theta,  \dot{X}, X+ (A\sqrt{g_{11}}\cos\theta+B\sqrt{g_{22}}\sin\theta)\partial_\theta \rspan\big) \\ &=T_{\dot{\mu}(0)}p\big(\lspan\partial_\theta,  \dot{X}, X \rspan\big) = T_{\dot{\mu}(0)}p(\mathcal{E}).
	\end{align*}
	
	This concludes the proof that
	\[
	\mathcal{H} = p_*\mathcal{E}.
	\]
\end{proof}

\section{Recovering the Lorentzian Manifold }

After the discussion in Section \ref{Deprol}, it is natural to ask whether Engel structures can also be useful in recovering a Lorentzian manifold from its space of null geodesics. Two main problems arise when considering such approach. Firstly, it is not obvious how one can recover the direction $\partial_\theta$ of the Lorentz prolongation in terms of the Engel flag. Moreover, different spacetimes can define the same space of null geodesics and the same contact structure, as the next example shows.

\begin{Example}
	Let $M = \S^2\times \R$. Let $t$ be the coordinate in $\R$ and define the Lorentzian metric $g = g_\circ -dt^2$ on $M$. Following the same arguments as in Section \ref{LorentzianS2S1}, we can see that the space of null geodesics of $(M,g)$ is
	\[
	\mathcal{N} \cong ST\S^2.
	\]
	
	In addition, if $\gamma\in\mathcal{N}$, we can take $s_1,s_2\in\R$ such that $\pi_\R\big(\gamma(s_1)\big), \pi_\R\big(\gamma(s_2)\big)\in (-\pi, \pi)$, where $\pi_\R:M\to\R$ is the projection onto the second factor. Hence, the contact structure on $\mathcal{N}$ is exactly the same as the contact structure on $ST\S^2$ seen as the space of null geodesics of $\S^2\times \S^1$.
\end{Example}

A difference between the Lorentzian manifolds $\S^2\times\S^1$ and $\S^2\times\R$ with the proposed metrics is that, in the former, different points have different skies, while in the latter infinitely many points have the same sky. 

We present next a procedure that allows us to define a spacetime with a particular space of null geodesics and contact structure given a set of skies, which we define as follows.

\begin{Definition}
	\label{DefSetOfSkies}
	Let $(M, \xi)$ be a compact contact manifold of dimension 3. A \textbf{set of skies} $\Sigma$ on $(M, \xi)$ is a collection of subsets of $\Sigma$ such that
	\begin{enumerate}
		\item for all $S\in\Sigma$, there exists a diffeomorphism $\varphi:\S^1\to S$,
		\item for all $S\in \Sigma$, the subset $S$ is Legendrian, that is, everywhere tangent to the distribution $\xi$,
		\item for every $x\in M$ and $v\in \xi_x\setminus\{0\}$, there exists a unique $S\in \Sigma$ such that $x\in S$ and $T_xS = \lspan v\rspan$, as oriented lines.
	\end{enumerate}
\end{Definition}

Let us fix a compact contact three-manifold $(M,\xi)$ and a set of skies $\Sigma$ on $(M,\xi)$. Consider the Cartan prolongation $\pi_C:S(\xi)\to(M,\xi)$, with Engel flag $\mathcal{W}\subset\mathcal{D}\subset\mathcal{E}\subset TM$. Now, for every $S\in\Sigma$, let us parametrize it via the diffeomorphism
\[
\begin{array}{cccc}
	\varphi:&\S^1&\to&S\\
	&\theta & \mapsto & S(\theta),
\end{array}
\]
and consider the collection of embedded circles $\{\dot{S}(\S^1)\}_{S\in \Sigma}$ in $S(\xi)$. By point $iii)$ in Definition \ref{DefSetOfSkies}, these curves determine a foliation of $S(\xi)$. The leaves can be parametrized by
\[
\begin{array}{cccc}
		\dot{\varphi}:&\S^1&\to&\dot{S}\\
	&\theta & \mapsto & \dot{S}(\theta).
\end{array}
\]

 Let $\Theta$ be the rank-one distribution defined by the tangent spaces of the foliation. Let $L = S(\xi)/\Theta$ and $p:S(\xi)\to S(\xi)/\Theta$. 

Let us show that the space of leaves $L = S(\xi)/\Theta$ is Hausdorff. Indeed, take two different elements  $S_1,S_2\in S(\xi)/\Theta$, and let us make an abuse of notation by writing $S_1,S_2\in\Sigma$, meaning that $S_1 = p(\dot{S}_1) $ and $S_2 = p(\dot{S}_2)$. By definition, $\dot{S}_1$ and $\dot{S}_2$ are disjoint subsets of $S(\xi)$. In addition, since they are embedded circles in $S(\xi)$, they are compact. Since $S(\xi)$ is a smooth manifold, it is normal Hausdorff and hence there exist open subsets $U,V$ of $S(\xi)$ that separate $\dot{S}_1$ and $\dot{S}_2$. Hence, $S_1$ and $S_2$ are separable. Let us assume that $\Theta$ is nice, which implies that $L$ is a differentiable manifold and $p$ is a submersion. It is enough, for instance, for the foliation to be regular, see \cite[Prop. 11.4.2]{brickell}.

We claim that $\Theta\subset\mathcal{D}$. Indeed, let $v\in S(\xi)$ and take $S$ such that $v = \dot{S}(0)$, for a proper parametrisation of $S$. Then, the leaf of the foliation defined by $\Theta$ and containing $v$ is $\dot{S}$. Thus,
\[
(\pi_C)_*\big(\Theta_v\big) = (\pi_C)_*\big(\lspan \frac{d}{d\theta}\big|_{\theta = 0}\dot{S}(\theta)\rspan\big) = \lspan \frac{d}{d\theta}\big|_{\theta = 0} \pi_C\big(\dot{S}(\theta)\big)\rspan = \lspan \frac{d}{d\theta}\big|_{\theta = 0} S(\theta)\rspan = \lspan \dot{S}(0)\rspan = \lspan v\rspan,
\]
and the claim follows.

We will now define a metric on $L$. Recall that, when Lorentz prolonging a spacetime, the Engel distribution over a vector of the cone was spanned by the vector itself and the extra added coordinate. Hence, it is natural to define the null cone on a point $y :=p(\dot{S}(0))$ of $L$ as
\[
C_y = p_*\big(\mathcal{D}|_{\dot{S}}\big),
\]
that is, for every preimage $z\in p^{-1}(y)$, the pushforward $p_*\big(\mathcal{D}_z\big)$ defines a direction of the null cone over $y$. Since $\mathcal{D}$ is a rank-two distribution that contains the rank-one distribution $\Theta$, then $p_*\mathcal({D}_z)$ is indeed a one-dimensional vector subspace of $T_{p(z)}L$. The subset $C_y$ might not, in general, be a geometric cone, which we need to continue our discussion.  Let us also assume that the map $\theta\mapsto p_*\big(\mathcal{D}_{\dot{S}(\theta)}\big)$ is injective. The following lemma gives a characterisation of the Engel manifold $\mathcal{P}C$ that ensures this is the case. However, further research is needed to find more suitable hypotheses.

\begin{Lemma}
	\label{Lemmacones}
	Let $\dot{S}\subset S(\xi)$. Assume there exists an open subset $U$ of $S(\xi)$ containing $\dot{S}$ and vector fields $V,Y,Z\in\mathfrak{X}(U)$ such that $p_*V, p_*Y, p_*Z\in T_p(\dot{S}) L$ are constant over all $\dot{S}$, and for which
	\[
	\mathcal{D}_{\dot{S}(\theta)} = \lspan V\cos\theta + Y\sin\theta +Z\rspan\oplus\Theta. 
	\]
	
	Then, $C_{p(\dot{S})} = p_*\big(\mathcal{D}|_{\dot{S}}\big)$ is a cone and the map $\theta\mapsto p_*\big(\mathcal{D}_{\dot{S}(\theta)}\big)$  is injective.
\end{Lemma}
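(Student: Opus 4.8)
The plan is to push the Engel flag forward along $p$ and reduce both assertions to a single claim: that $\bar V := p_*V$, $\bar Y := p_*Y$ and $\bar Z := p_*Z$, as a triple in $T_{p(\dot S)}L$, is a basis. Since $p$ is a submersion whose fibres are the leaves of the foliation integrating $\Theta$, we have $\ker dp = \Theta$. Applying $dp$ at $\dot S(\theta)$ to $\mathcal D_{\dot S(\theta)} = \lspan V\cos\theta + Y\sin\theta + Z\rspan \oplus \Theta_{\dot S(\theta)}$ kills the $\Theta$-summand and, since $p_*V, p_*Y, p_*Z$ take the fixed values $\bar V, \bar Y, \bar Z$ all along $\dot S$, gives
\[
p_*\big(\mathcal D_{\dot S(\theta)}\big) = \lspan \bar V\cos\theta + \bar Y\sin\theta + \bar Z\rspan .
\]
Therefore $C_{p(\dot S)} = \{\lambda(\bar V\cos\theta + \bar Y\sin\theta + \bar Z)\ |\ \lambda\in\R,\ \theta\in[0,2\pi)\}$. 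Once $\{\bar V,\bar Y,\bar Z\}$ is known to be a basis, reading off coordinates in that basis identifies $C_{p(\dot S)}$ with $\{(x_1,x_2,x_3)\ |\ x_1^2 + x_2^2 = x_3^2\}$, which is precisely a light cone; and the map $\theta\mapsto \lspan\bar V\cos\theta + \bar Y\sin\theta + \bar Z\rspan$ is injective, since an equality of two such lines forces the proportionality factor to equal the $\bar Z$-coefficient $1$, so $(\cos\theta,\sin\theta)$ and hence $\theta\in[0,2\pi)$ are determined. So everything comes down to the basis claim.

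To prove that, I would work in a chart around $\dot S$ straightening the foliation, so that $\Theta = \lspan\partial_\theta\rspan$ with $\theta$ restricting to the parameter of $\dot S$, and put $X := \cos\theta\,V + \sin\theta\,Y + Z \in \mathfrak{X}(U)$, which by hypothesis is a section of $\mathcal D$ along $\dot S$. Then along $\dot S$ one has $\mathcal D = \lspan\partial_\theta, X\rspan$ and $\mathcal E = \lspan\partial_\theta, X, \dot X\rspan$ with $\dot X = [\partial_\theta, X]$ and $\ddot X = [\partial_\theta, \dot X]$; replacing $X$ by a genuine local frame $\tilde X$ of $\mathcal D$ with $\tilde X|_{\dot S} = X|_{\dot S}$ does not change these brackets on $\dot S$, because a vector field vanishing on $\dot S$ has all its iterated $\partial_\theta$-brackets vanishing there, so I may compute with $X$. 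Since $\partial_\theta$ is $p$-related to the zero field and $V,Y,Z$ to the constant fields $\bar V,\bar Y,\bar Z$, every iterated $\partial_\theta$-bracket of $V,Y,Z$ is annihilated by $p_*$; differentiating therefore gives, along $\dot S$,
\[
p_*\dot X = -\bar V\sin\theta + \bar Y\cos\theta , \qquad p_*\ddot X = -(\bar V\cos\theta + \bar Y\sin\theta) = \bar Z - p_*X .
\]

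The basis claim follows in two steps. First, $\mathcal E$ has rank $3$ and $\mathcal D$ rank $2$, so $\dot X\notin\mathcal D\supseteq\Theta$ along $\dot S$, whence $p_*\dot X = -\bar V\sin\theta + \bar Y\cos\theta$ never vanishes, which is possible only if $\bar V$ and $\bar Y$ are linearly independent. Second, $S(\xi)$ is the Cartan prolongation of $(M,\xi)$, whose kernel $\mathcal W = \lspan\partial_t\rspan$ is $\pi_C$-vertical, while $\partial_\theta$ projects under $\pi_C$ to the nowhere-zero tangent of a sky; hence $\mathcal W\neq\Theta$, and writing $\mathcal W = \lspan\nu\partial_\theta + \tilde X\rspan$ inside the plane $\mathcal D$ and feeding this into $[\mathcal W,\mathcal E]\subseteq\mathcal E$ together with $[\mathcal E,\mathcal E] = TS(\xi)$ forces $\ddot X\notin\mathcal E$ along $\dot S$. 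Since $\ker dp = \Theta\subseteq\mathcal E$, this yields $p_*\ddot X\notin p_*\mathcal E_{\dot S(\theta)} = \lspan\bar V\cos\theta + \bar Y\sin\theta + \bar Z,\ -\bar V\sin\theta + \bar Y\cos\theta\rspan$; as $p_*\ddot X = \bar Z - p_*X$ with $p_*X$ already in that plane, we get $\bar Z\notin p_*\mathcal E_{\dot S(\theta)}$, and evaluating at $\theta = 0$ rules out $\bar Z\in\lspan\bar V,\bar Y\rspan$. Together with the first step, $\{\bar V,\bar Y,\bar Z\}$ is a basis of $T_{p(\dot S)}L$, and the lemma follows.

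I expect the basis claim to be the only real difficulty: the formula for $C_{p(\dot S)}$ is immediate, but excluding the degenerate possibility $\bar Z\in\lspan\bar V,\bar Y\rspan$ — in which $C_{p(\dot S)}$ would collapse to a plane and the parametrisation by $\theta$ would become two-to-one — genuinely requires both the first-order ($\dot X\notin\mathcal D$) and second-order ($\ddot X\notin\mathcal E$) non-integrability built into the Engel flag, and it is essential here that we quotient by $\Theta$ rather than by the Engel kernel $\mathcal W$.
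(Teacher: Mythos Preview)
Your proof is correct and goes well beyond what the paper does. The paper's entire argument is the single line
\[
p_*\big(\mathcal D|_{\dot S}\big)=\{p_*V\cos\theta+p_*Y\sin\theta+p_*Z\mid\theta\in\S^1\},
\]
followed by the remark that this is ``clearly a geometric cone'' once $p_*V,p_*Y,p_*Z$ are constant; neither linear independence of $\bar V,\bar Y,\bar Z$ nor the injectivity of $\theta\mapsto p_*\mathcal D_{\dot S(\theta)}$ is addressed. Your first paragraph reproduces exactly this step, and you then supply what the paper omits: a proof that $\{\bar V,\bar Y,\bar Z\}$ is a basis of $T_{p(\dot S)}L$, extracted from the Engel flag. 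The two layers of that argument --- $\dot X\notin\mathcal D$ forcing $\bar V,\bar Y$ independent, and $\mathcal W\neq\Theta$ together with $[\mathcal E,\mathcal E]=TS(\xi)$ forcing $\ddot X\notin\mathcal E$, hence $\bar Z\notin\langle\bar V,\bar Y\rangle$ --- are correct and make the lemma self-contained rather than leaving the non-degeneracy of the cone as an implicit extra assumption.

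One small point to tighten: you invoke $p$-relatedness of $V,Y,Z$ to constant fields on $L$, but the hypothesis only asserts that $p_*V,p_*Y,p_*Z$ are constant \emph{along} $\dot S$, not on a neighbourhood. The conclusion $p_*[\partial_\theta,V]|_{\dot S}=0$ you need is still valid: since the $\partial_\theta$-flow $\phi_s$ preserves $p$, one has $dp_{\dot S(\theta)}\circ d\phi_{-s}=dp_{\dot S(\theta+s)}$, so the Lie-derivative limit becomes $\lim_{s\to0}\tfrac{1}{s}(\bar V-\bar V)=0$. With that adjustment your argument is complete.
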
 
\begin{proof}
	The cone on $y = p(\dot{S})$ is given by
	\[
	p_*\big(\mathcal{D}|_{\dot{S}}\big) = p_*\big(\{\lspan V\cos\theta + Y\sin\theta +Z\rspan\ |\ \theta\in \S^1\}\big) = \{p_*V\cos\theta+p_*Y\sin\theta+p_*Z\ |\ \theta\in\S^2\}, 
	\]
	which is clearly a geometric cone if $p_*V,p_*Y, p_*Z$ are constant over $\dot{S}$.
\end{proof}

\begin{Lemma}
	Let $M$ be a three-dimensional manifold and $C$ a bundle of cones over $M$ that vary smoothly with respect to the basepoint. Then, there exists a Lorentzian metric $g$ in $M$ whose bundle of cones is precisely $C$.
\end{Lemma}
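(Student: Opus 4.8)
The plan is to realise the cone bundle by an explicit Lorentzian metric on small charts, to note that any two such local realisations differ by multiplication by a positive function, and then to glue the pieces with a partition of unity. First I would record the pointwise fact underlying everything. At each $x\in M$ the solid cone $C_x\subset T_xM$ is, by definition, linearly equivalent to the standard cone $C_0=\{(a,b,c)\ :\ a^2+b^2\le c^2\}$; so if $A_x\in GL(T_xM)$ carries $C_0$ onto $C_x$, then the quadratic form $q_x(v)=q_0(A_x^{-1}v)$ with $q_0(a,b,c)=a^2+b^2-c^2$ is a Lorentzian inner product on $T_xM$ with $\{q_x\le 0\}=C_x$. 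Thus a realising metric exists at every point; the only issue is making it smooth and global.

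Next I would establish uniqueness up to a positive scalar: if $g,g'$ are Lorentzian inner products on $T_xM$ with $\{g\le 0\}=\{g'\le 0\}=C_x$, then $g'=\lambda g$ with $\lambda>0$. Transporting by a linear isomorphism we may assume $C_x=C_0$; then $g'$, viewed as a degree-two homogeneous polynomial, vanishes on $\partial C_0$, hence its restriction to the plane $\{c=1\}$ is a polynomial of degree at most two vanishing on the unit circle, which is therefore a scalar multiple of $a^2+b^2-1$; by homogeneity $g'=\lambda q_0$, and $\lambda>0$ because the interior of $C_0$ must be time-like for both. This pins down the realising metric fibrewise once a positive normalisation is fixed.

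For the smooth local model, I fix $x_0\in M$ and a smooth local frame $(e_1,e_2,e_3)$ of $TM$ near $x_0$ with $e_3(x_0)\in\operatorname{int}C_{x_0}$; shrinking the neighbourhood, $e_3(x)\in\operatorname{int}C_x$ throughout. In the coordinates $(a,b,c)$ of this frame the cross-section $C_x\cap\{c=1\}$ is a compact region bounded by an ellipse $E_x$ with the origin in its interior, and $E_x$ varies smoothly with $x$; write $E_x=\{(a,b)\ :\ (a,b)^{\top}P_x(a,b)+2c\,\ell_x^{\top}(a,b)=1\}$ with $P_x$ symmetric positive definite and $\ell_x\in\mathbb{R}^2$, both depending smoothly on $x$ (the conic through $E_x$, hence $(P_x,\ell_x)$, is recovered smoothly from five smoothly chosen points of $E_x$). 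A vector $w=ae_1+be_2+ce_3$ lies in $C_x$ exactly when $w=0$ or $c\neq 0$ and $(a/c,b/c)$ lies in the disk bounded by $E_x$; clearing the factor $c^2>0$, this is equivalent to
\[
g^{U}_x(w,w)\ :=\ (a,b)^{\top}P_x(a,b)+2c\,\ell_x^{\top}(a,b)-c^2\ \le\ 0 .
\]
This $g^U$ is a quadratic form depending smoothly on $x$; its Gram matrix has determinant $-\det(P_x+\ell_x\ell_x^{\top})<0$ and is positive definite on $\operatorname{span}(e_1,e_2)$, so it has exactly one negative eigenvalue, i.e. signature $(2,1)$. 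Hence $g^U$ is a smooth Lorentzian metric on a neighbourhood $U$ of $x_0$ whose cone bundle is $C|_U$.

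Finally I would glue: cover $M$ by such neighbourhoods $\{U_\alpha\}$ with Lorentzian metrics $g^\alpha$ realising $C|_{U_\alpha}$; on $U_\alpha\cap U_\beta$ the uniqueness step gives $g^\beta=\lambda_{\beta\alpha}g^\alpha$ with $\lambda_{\beta\alpha}>0$ smooth (evaluate on a local time-like vector field). Taking a subordinate partition of unity $\{\rho_\alpha\}$ and $g=\sum_\alpha\rho_\alpha g^\alpha$, at each $x$ one fixes some $\alpha_0$ with $\rho_{\alpha_0}(x)>0$ and gets $g_x=\big(\sum_\alpha\rho_\alpha(x)\lambda_{\alpha\alpha_0}(x)\big)g^{\alpha_0}_x$, a positive multiple of $g^{\alpha_0}_x$, hence Lorentzian with $\{g_x\le 0\}=C_x$; smoothness of $g$ is automatic. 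The step I expect to be the only genuine subtlety is the smooth dependence in the local model — that the cross-section ellipse, and thus $(P_x,\ell_x)$, can be chosen to depend smoothly on the base point; the rest is the classical computation identifying a homogeneous quadratic vanishing on the standard light cone, together with routine partition-of-unity patching made harmless by the uniqueness-up-to-positive-scalar.
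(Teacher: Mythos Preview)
Your argument is correct and follows the same skeleton as the paper's --- build a realising metric locally, then glue --- but the two steps are handled differently. For the local construction, the paper works in a coordinate chart and simply reads off the symmetric matrix $G_y$ from the coefficients of the degree-two polynomial cutting out $C_y$; you instead pick a frame with $e_3$ time-like, slice the cone at $\{c=1\}$ to obtain an ellipse, and recover the quadratic form from its equation, which makes the signature $(2,1)$ and the smooth dependence on $x$ more explicit. For the gluing, the paper normalises each local matrix to have determinant $-1$ and argues that this uniquely pins down the tensor, so the local metrics coincide on overlaps without any averaging; you instead use the conformal uniqueness (two realising metrics differ by a positive scalar) together with a partition of unity, noting that a convex combination of positive multiples of a fixed Lorentzian form is again a positive multiple. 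Your route is a little longer but entirely standard and makes the patching step transparent; the paper's determinant normalisation is quicker in spirit but is coordinate-dependent (the determinant of the Gram matrix scales by $(\det P)^2$ under change of frame), so your partition-of-unity argument is actually the cleaner way to close the gluing.
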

\begin{proof}
	Let $x\in M$. Let $(U, \varphi)$ be a chart around $x$ and $\partial_{x_1}, \partial_{x_2}, \partial_{x_3}$ the coordinate vectors induced by the chart. For all $y\in U$, there exists a second-degree polynomial $p_y(z_1,z_2,z_3)$ such that $z_1\partial_{x_1}+z_2\partial_{x_2}+z_3\partial_{x_3}\in C_y$ if and only if $p_y(z_1,z_2,z_3) = 0$. Let us denote by $c_{ij} = c_{ji}$ the coefficient in $p_y$ of the monomial $z_iz_j$. Let now $G_y = (g_{ij})_y\in M_3(\R)$ be the symmetric matrix given by
	\begin{align*}[left = \empheqbiglbrace]
		g_{ii} &= c_{ii}\\
		g_{ij} & = \frac{c_{ij}}{2}, \text{ if $i\neq j$}.
	\end{align*}

By definition, a vector $z_1\partial_{x_1}+z_2\partial_{x_2}+z_3\partial_{x_3}\in C_y$ if and only if $(z_1,z_2,z_3)G_y(z_1,z_2,z_3)^t = 0$. Since $G_y$ represents a cone, necessarily the signature of $G_y$ is $(2,1)$. Since $G_y$ is diagonalisable and has three non-zero eigenvalues, $\det G_y\neq 0$. Thus, we can define $\tilde{G}_y = \frac{1}{|\det G_y|}G_y$. It is clear that $\tilde{G}_y$ still represents the cone and it is the only symmetric matrix with determinant $-1$ that does so. Since the cone $C_y$ varies smoothly with respect to $y$, so do the coefficients $c_{ij}$ and hence the matrix $\tilde{G}_y$. Thus, the collection $\{\tilde{G}_y\}_{y\in U}$ is a Lorentzian metric on $U$ with bundle of null cones $C|_U$.

Let $(V,\psi)$ be another local chart of $M$ with $U\cap V\neq\emptyset$ and $z\in U\cap V$. By uniqueness, the matrices $\tilde{G}_z$ and $\tilde{H}_z$ induced by the coordinate vectors of both charts are necessarily related by a change of basis matrix. Hence, the Lorentzian metrics defined in $U$ and $V$ coincide in $U\cap V$. Thus, taking an atlas of $M$, we can define a global Lorentzian matric with bundle of null cones $C$. 	
\end{proof}

Then, any other metric $g'$ with the same cones is conformal with $g$, and produces the same space of null geodesics, see \cite[Lemm. 2.1.2 and Prop. 2.1.3]{bautista}. Thus, we can fix any one such metric, say $g$. Since the foliation $ \{\dot{S}(\S^1)\}_{S\in\Sigma}$ is oriented, it induces an orientation on each of the cones $C_y$ on $L$ which globally gives an orientation of the bundle $C$. One can use the righ-hand rule to choose one of the two hemicones on each tangent space, and given that the orientation of $C$ is globally well-defined, this choice is globally consistent. Hence, the manifold $(L, g)$ is a spacetime. Assume it is separable. Further research is needed to find suitable hypotheses on the initial data that ensure that this is the case. Define now the Lorentz prolongation $\pi_L:\mathcal{P}C\to L$, with Engel flag $\tilde{\mathcal{W}}\subset\tilde{\mathcal{D}}\subset\tilde{\mathcal{E}}\subset T(\mathcal{P}C)$. The map
\[
\begin{array}{cccc}
	\Phi: & S(\xi) &\to & \mathcal{P}C\\
	& v & \to & p_*\big(\mathcal{D}_v\big)
\end{array}
\] 
provides a diffeomorphism. We claim that $\Phi_*(\mathcal{D})$ is precisely the canonical Engel structure $\tilde{\mathcal{D}}$ on $\mathcal{P}C$. Note that, if $v\in \S(\xi)$, then
\[
\pi_L\circ\Phi(v) = \pi_L\big(p_*(\mathcal{D}_v)\big) = p(v).
\]

Thus, $(\pi_L)_*\circ\Phi_*\big(\mathcal{D}_v\big) = (\pi_L\circ \Phi)_*\big(\mathcal{D}_v\big)= p_*\big(\mathcal{D}_v\big) = \Phi(v)$, and hence $\Phi_*\mathcal{D}\subset\tilde{\mathcal{D}}$. It is only left to show that $\Phi_*(\mathcal{D})$ is a rank-two distribution. Since $\Phi$ is a diffeomorphism, it is in particular a submersion and the claim follows.

Thus, if $\mathcal{N}$ is the space of null geodesics of $(L,g)$, we have, by Theorem \ref{RecoverManifold},
\[
\mathcal{N} = \mathcal{P}C/\tilde{\mathcal{W}}\cong S(\xi)/\mathcal{W}\cong M,
\]
and we recover the initial manifold $M$. In addition, if all the projections involved are submersions, the canonical contact structure $\mathcal{H}$ on $\mathcal{N}$ is, by Theorem \ref{Recoverctc},
\[
q_*\tilde{\mathcal{E}},
\]
where $q:\mathcal{P}C\to \mathcal{P}C/\tilde{\mathcal{W}}$ is the canonical projection. This contact structure gets carried to $M$ as
\[
(\pi_C)_*\mathcal{E}= \xi
\]
and hence we also recover the contact structure on $M$.

\chapter*{\huge List of Notations}
\addcontentsline{toc}{chapter}{List of Notations}
\vspace{-2cm}
\centering
\begin{tabular}{cp{0.8\textwidth}}
	$M$ & Differentiable manifold\\
	$TM$ & Tangent bundle of a manifold\\
	$\xi$ & Field of hyperplanes \\
	$\mathfrak{X}(M)$ & Space of smooth vector fields on a manifold\\
	$X, Y, Z, V$ & Vector field\\
	$\lspan -,-\rspan$ & Span of vectors, span of vector fields\\
	$\alpha, \theta$ & Differential form on a manifold, curve in a manifold \\
	$\ker$ & Kernel of a differential form, kernel of a linear map\\
	$TM/\xi$ & Quotient bundle\\
	$d\alpha$  & Differential of a form\\
$\wedge$ & Exterior product of forms\\
	$[-,-]$ & Lie bracket, distribution generated by the Lie brackets of other distributions\\
	$\iota$ & Inner product of a differentiable form and a vector field, inclusion map\\
	$\R^n$ & Euclidean space of dimension $n$\\
	$STM$ & Unit tangent bundle of a manifold\\
	$^\perp$ & Orthogonal subspace in a tangent space\\
	$T^*M$ & Cotangent bundle of a manifold\\
	$\S^n$ & Sphere of dimension $n$ \\
	$\tilde{\pi}$ & Projection of the cotangent bundle onto the manifold\\
	$\pi$ & Ratio of circle's perimeter to its diameter, projection from the tangent bundle to the manifold\\
	$Tf$ & Tangent map of a smooth map\\
	$\partial_x$ & Coordinate vector field\\
	$\delta_{ij}$ & Kronecker delta\\
	$dx$ & Coordinate one-form\\
	$f_*$ & Pushforward of a smooth map\\
	$\chi$ & Canonical contact structure on a tangent manifold \\
	$g$ & Pseudo-Riemannian metric\\
\end{tabular}

\centering
\begin{tabular}{cp{0.8\textwidth}}
		$G, (g_{ij})$ & matrix representation of a pseudo-Riemannian metric\\
		$(-,-)$ & Signature of a pseudo-Riemannian metric\\
		$\nabla$ & Affine connection on a manifold, Levi-Civita connection on a pseudo-Riemannian manifold\\
		$\Gamma_{ij}^k$ & Christoffel symbols of a pseudo-Riemannian manifold in local chart\\
	$\gamma, \mu,\beta, c$ & Curve within a manifold, geodesic in a pseudo-Riemannian manifold\\
	$\frac{D}{dt}$ & Covariant derivative in a manifold\\
	$\dot{\alpha},\dot{\theta}, \dot{\gamma}, \dot{\mu}, \dot{\beta}, \dot{c}$ & Tangent vector on a curve\\
	$\mathcal{I}C$ & Bundle of non-space-like vectors of a Lorentzian manifold\\
	$C$ & Bundle of null vectors of a Lorentzian manifold\\
	$C^+$ & Bundle of future pointing null vectors of a Lorentzian manifold\\
	$\mathcal{N}$ & Space of null geodesics of a spacetime\\
	$X_g$ & Geodesic spray on the tangent bundle of a Lorentzian manifold\\
	$\Delta$ & Euler field on the tangent bundle of a Lorentzian manifold\\
	$\mathcal{D}$ & Rank-two distribution, Engel structure\\
	$M/\xi$ & Space of leaves of a distribution, orbit space \\
	$\pi_\mathcal{N}$ & Projection of $C^+$ onto $\mathcal{N}$\\
	$\mathfrak{S}$ & Sky of a point\\
	$\mathcal{H}$ & Canonical contact structure on the space of null geodesics\\
	$\mathcal{E}$ & Even-contact structure\\
	$\mathcal{W}$ & Kernel of an Engel structure\\
	$\theta$ & Differential form on a manifold\\
	$B^3$ & Ball of dimension 3 in $\R^3$\\
	$\pi_C:S(\xi)\to M$ & Cartan prolongation of a contact manifold\\
	$p$ & Canonical projection onto the space of leaves\\
	$\mathcal{P}C$ & Bundle of projectivised cones of a Lorentzian manifold\\
	$\pi_L:\mathcal{P}C\to M$ & Lorentz prolongation of a Lorentzian manifold\\
	$\langle -,-\rangle$ & Inner product on Euclidean space, inner product in the division algebra of quaternions \\
	$g_\circ$ & Round metric on the sphere \\
	$\iota'$ & Inclusion map\\
	$\pi_M:M\times N\to M$ & Projection from a product manifold onto a factor\\
	$\Z_c$ & Finite cyclic group of order $c$\\
	$\H$ & Division algebra of quaternions\\
	$\C$ & Field of complex numbers\\
	$i$ & Complex unit in $\C$, complex unit in $\H$\\
	$j,k$ & Complex unit in $\H$\\
	$\V$ & Vector space of pure imaginary quaternions\\
			$S\H$ & Space of unit quaternions\\
\end{tabular}

\centering
\begin{tabular}{cp{0.8\textwidth}}
						$S\V$ & Space of unit pure imaginary quaternions\\
	$f$ & Projection of $ST\S^2$ onto $\S^2$ via the cross product\\
	$\tau_\omega:S\H\to S\V$ & Hopf fibration induced by unit pure imaginary quaternion $\omega$\\
	$L(p,1)$ & Lens space \\
	$\R P^n$ & Projective space of dimension $n$\\
	$r$ & Projection of $ST\S^2$ onto a lens space\\
	$\sigma$ & Antipodal map on $\S^n$\\
	
\end{tabular}

\begin{bibdiv}
	\addcontentsline{toc}{chapter}{Bibliography}
	\begin{biblist}
				\bib{adachi}{article}{
			title = {Engel Structures with Trivial Characteristic Foliations}
			author = {Adachi, J.}
			year = {2002}
			journal = {Algebr. Geom. Topol.}
			number = {2}
			pages = {239–255}
		}
				\bib{bautista}{thesis}{
			title =  {Geometric structures and causality in the space of light rays of a spacetime}
			author = {Bautista, A.}
			year = {2008}
			organization = {Universidad Complutense de Madrid}
			type = {Ph.D. Thesis}
		}
	\bib{BEE}{book}{
		title = {Global Lorentzian Geometry}
		author = {Beem, J.}
		author = {Ehrlich, P.}
		author = {Easley, K.}
		year = {1996}
		publisher = {Marcel Dekker}
	}
\bib{BerS}{article}{
title = {On smooth Cauchy hypersurfaces and Geroch's splitting theorem}
author = {Bernal, A.}
author = {Sánchez, M.}
journal = {Comm. Math. Phys.}
year = {2003}
volume = {243}
number = {3}
pages = {461–470}
}
			\bib{closedgeodesics}{book}{
		author = {Besse, A.}
		title = {Manifolds all of whose Geodesics are Closed}
		publisher = {Springer}
		year = {1978}
	}
			\bib{bonahon}{article}{
		author = {Bonahon, F.}
		title = {Difféotopies des espaces lenticulaires}
		journal = {Topology}
		year = {1981}
		volume = {22}
		number = {3}
		pages = {305-314}
	}
		\bib{bottu}{book}{
	author = {Bott, R.}
	author = {Tu, L.}
	title = {Differential forms in algebraic topology}
	publisher = {Springer}
	year = {1982}
}
\bib{brickell}{book}{
author = {Brickell, F.}
author = {Clark, R.}
title = {Differentiable manifolds. An introduction}
publisher = {Van Nostrand
	Reinhold}
year = {1970}
}
\bib{docarmo}{book}{
	title = {Riemannian Geometry}
	author = {do Carmo, M.}
	year = {1992}
	publisher = {Birkhäuser}
}

			\bib{existenceh}{article}{
		title = {Existence h-principle for Engel structures}
		author = {Casals, R.}
		author = {Pérez, J.L.}
		author = {del Pino, A.}
		author = {Presas, F.}
		year = {2015}
		journal = {Invent. Math.}
		volume = {210}
		number = {2}
		pages = {417-451}
	}
\bib{isothermal}{article}{
title = {An elementary proof of the existence of isothermal parameters on a surface}
author = {Chern, S.}
journal = {Proc. Amer. Math. Soc.}
volume = {6}
year = {1955}
pages = {771-782}
}
		\bib{chernov}{article}{
	author = {Chernov, V.}
	author = {Nemirovski, S.}
	title = {Legendrian Links, Causality and the Low Conjecture}
	journal = {Geom. Funct. Anal.}
	year = {2010}
	volume = {19}
	number = {5}
	pages = {1320–1333}
}

\bib{nonneg}{article}{
	author = {Chernov, V.}
	author = {Nemirowski, S.}
	title  = {Non-negative legendrian isotopies in $ST^*M$}
	journal = {Geom. Topol.}
	volume = {14}
	number = {1}
	year = {2010}
	pages = {611-626}
}
\bib{deturck}{article}{
	title = { Existence of elastic deformations with prescribed principal strains and triply orthogonal systems}	
	author = {Deturck, D. M.}
	author = {Yang, D.}
	journal = {Duke Math. J.}
	volume = {52}
	number = {2}
	pages = {243-260}
	year = {1984}
}
\bib{einstein}{article}{
title = {Grundlage der allgemeinen Relativitätstheorie}
author = {Einstein, A.}
journal = {Ann. Phys.}
volume = {49}
year = {1916}
pages = {769–822}
}
\bib{tightonlens}{article}{
	title = {Tight Contact Structures on Lens Spaces}
	author = {Etnyre, J.}
	year = {2000}
	journal = {Commun. Contemp. Math.}
				volume = {2}
	number = {4}
	pages = { 559–577}
}
		\bib{geiges}{book}{
			title = {An Introduction to Contact Topology}
			author = {Geiges, H.}
			year = {2008}
			publisher = {Cambridge Univ. Press}
		}
	
	\bib{roots}{article}{
		author = {Harris, G.}
		author = {Martin, C.}
		title = {The roots of a polynomial vary continuously as a function of the coefficients}
		volume = {100}
		number = {2}
		year = {1987}
		journal = {Proc. Amer. Math. Soc. }
	}
			\bib{onprolong}{article}{
		title = {On Prolongations of Contact Manifolds}
		author = {Klukas, M.}
		author = {Sahamie, B.}
		year = {2013}
		journal = {Amer. Math. Soc.}
		volume = {141}
		number = {9}
		pages = {3257–3263}
	}

\bib{kowalski}{article}{
	title = {Diagonalization of three-dimensional pseudo-Riemannian metrics}
	author = {Kowalski, O.}
	author = {Sekizawa, M.}
	year = {2013}
	journal = {J. Geom. Phys.}
	number = {74}
	pages = {251–255}
}
			\bib{unitprojective}{article}{
		title = {Unit Tangent Bundle over two-dimensional Real Projective Space}
		author = {Konno, T.}
		year = {2002}
		journal = {Nihonkai Math. J.}
		volume = {13}
		number = {1}
		pages = { 57–66}
	}
		\bib{lee}{book}{
			title = {Introduction to Riemannian Manifolds}
			edition = {2}
			publisher = {Springer}
			year = {2013}
			author = {Lee, J.}
		}
	
	\bib{leesmooth}{book}{
		title = {Introduction to Smooth Manifolds}
		author = {Lee, J.}
		edition = {2}
		year = {2013}
		publisher = {Springer}}
	\bib{low4}{thesis}{
		type = {Ph.D. Thesis}
		author = {Low, R.}
		title = {Causal relations and spaces of null geodesics}
		institution ={University of Oxford}
		year ={1988}
	}
\bib{low5}{article}{
	author = {Low, R.}
	title = {Celestial spheres, light cones, and cuts}
	journal = {J. Math. Phys.}
	volume = {34}
	number = {1}
	pages = {315-319}
	year = {1993}
}
	\bib{low2}{article}{
		author = {Low, R.}
		title = {Spaces of causal paths and naked singularities}
		journal = {Class. Quantum Grav.}
		number = {7}
		pages = {943-954}
		year = {1990}
	}
	\bib{low1}{article}{
		author = {Low, R.}
		title  = {The geometry of the space of null geodesics}
		journal = {J. Math. Phys.}
		number = {30}
		pages = {809-811}
		year = {1989}
	}
\bib{low3}{article}{
	author = {Low, R.}
	title = {The space of null geodesics}
	journal = {Nonlinear Anal.}
	number = {47}
	pages = {3005-3017}
	year = {2001}
}
		\bib{hopf}{misc}{
			author = {Lyons, D.}
			year = {2003}
			title = {An Elementary Introduction to the Hopf Fibration}
		}
	
	\bib{minkowski}{article}{
		title = {Die Grundgleichungen für die elektromagnetischen Vorgänge in bewegten Körpern}
		author = {Minkowski, H.}
		journal = {Math. Ann.}
		number = {68}
		pages = {472-525}
		year = {1910}
	}
		\bib{engeldeform}{misc}{
	title = {Engel Deformations and Contact Structures}
	author = {Montgomery, R.}
	year = {1999}
	organization = {Mathematics Dept. UCSC}
}
		\bib{morita}{book}{
	author = {Morita, S.}
	title = {Geometry of Differential Forms}
	publisher = {American Mathematical Society}
	year = {1998}
}
\bib{natario}{article}{
	author = {Natário, J.}
	author = {Tod, P.}
	title = {Linking, Legendrian linking and causality}
	journal = {Proc. London Math. Soc.}
	volume = {88}
	year = {2004}
	pages = {251-272}}

	\bib{canonicalcontactunit}{article}{
	author = {Oba, T.}
	author = {Ozbagci, B.}
	title = {Canonical Contact Unit Cotangent Bundle}
	year = {2016}
	journal = {Adv. Geom.}	
	volume = {18}
	number = {4}
	pages = {405–424}
}
\bib{oneill}{book}{
	author = {O'Neill, B.}
	title = {Semi-Riemannian Geometry with Applications to Relativity}
	year = {1983}
	publisher = {Academic Press}
}
		\bib{introtodistandfol}{misc}{
	title = {An Introduction to Distributions and Foliations}
	author = {Otten, S.}
	year = {2008}
	organization = {Michigan State University}
}
\bib{penrose1}{article}{
	author = {Penrose, R.}
	title  = {The twistor programme}
	journal = {Rep. Mathematical Phys.}
	number = {1}
	year = {1977}
	pages = {65-76}
	volume = {12}
}
\bib{penrose2}{book}{
	author = {Penrose, R.}
	author = {Rindler, W.}
	title = {Spinors and space-time: Vol. 2, Spinor and twistor methods
		in space-time geometry}
	year = {1988}
	publisher = {Cambridge University Press}
}
		\bib{delpino}{article}{
			author = {del Pino, A.}
			title = {On the Classification of Prolongations up to Engel Homotopy}
			year = {2017}
			journal = {Proc. Am. Math. Soc.}
			volume = {146}
			number = {2}
			pages = {891–907}
		}
	\bib{poincare}{article}{
		title = {Sur la dynamique de l’électron}
		author = {Poincaré, H.}
		journal = {Rend. Circ. Matem. Palermo}
		number = {21}
		pages = {129-175}
		year = {1906}
	}
		\bib{frannonint}{article}{
	title = {Non-integrable distributions and the h-principle}
	author = {Presas, F.}
	journal = { Eur. Math. Soc. Newsl.}
	year = {2016}
	number = {99}
	pages = {18–26}
}
		\bib{holomorphiceng}{article}{
			author = {Presas, F.}
			author = {Solá Conde, L.E.}
			title = {Holomorphic Engel Structures}
			journal = {Rev. Mat. Complut.}
			year = {2014}
			volume = {27}
			number = {1}
			pages = {327–344}
		}

	\bib{warner}{book}{
			author = {Warner, F.}
			title = {Foundations of Differentiable Manifolds and Lie Groups}
			publisher = {Springer}
			year = {2010}
		}

	\end{biblist}
\end{bibdiv}
\end{document}